\providecommand{\tabularnewline}{\\}
\theoremstyle{plain}
\newtheorem{thm}{\protect\theoremname}
\theoremstyle{plain}
\newtheorem{lem}[thm]{\protect\lemmaname}
\theoremstyle{definition}
\newtheorem{defn}[thm]{\protect\definitionname}
\theoremstyle{plain}
\newtheorem{prop}[thm]{\protect\propositionname}
\theoremstyle{remark}
\newtheorem{rem}[thm]{\protect\remarkname}
\providecommand{\definitionname}{Definition}
\providecommand{\lemmaname}{Lemma}
\providecommand{\propositionname}{Proposition}
\providecommand{\remarkname}{Remark}
\providecommand{\theoremname}{Theorem}
\begin{document}
\global\long\def\conj{*}%

\global\long\def\Z{\mathbb{Z}}%

\global\long\def\R{\mathbb{R}}%

\global\long\def\C{\mathbb{C}}%

\global\long\def\H{{\cal H}}%

\global\long\def\X{{\cal X}}%

\global\long\def\Q{{\cal Q}}%

\global\long\def\Y{{\cal Y}}%

\global\long\def\e{{\mathbf{e}}}%

\global\long\def\et#1{{\e(#1)}}%

\global\long\def\ef{{\mathbf{\et{\cdot}}}}%

\global\long\def\x{{\mathbf{x}}}%

\global\long\def\w{{\mathbf{w}}}%

\global\long\def\xt#1{{\x(#1)}}%

\global\long\def\xf{{\mathbf{\xt{\cdot}}}}%

\global\long\def\d{{\mathbf{d}}}%

\global\long\def\b{{\mathbf{b}}}%

\global\long\def\u{{\mathbf{u}}}%

\global\long\def\y{{\mathbf{y}}}%

\global\long\def\yt#1{{\y(#1)}}%

\global\long\def\yf{{\mathbf{\yt{\cdot}}}}%

\global\long\def\z{{\mathbf{z}}}%

\global\long\def\v{{\mathbf{v}}}%

\global\long\def\h{{\mathbf{h}}}%

\global\long\def\s{{\mathbf{s}}}%

\global\long\def\c{{\mathbf{c}}}%

\global\long\def\p{{\mathbf{p}}}%

\global\long\def\f{{\mathbf{f}}}%

\global\long\def\g{{\mathbf{g}}}%

\global\long\def\a{{\mathbf{a}}}%

\global\long\def\rb{{\mathbf{r}}}%

\global\long\def\rt#1{{\rb(#1)}}%

\global\long\def\rf{{\mathbf{\rt{\cdot}}}}%

\global\long\def\mat#1{{\ensuremath{\bm{\mathrm{#1}}}}}%

\global\long\def\valpha{\mat{\alpha}}%

\global\long\def\vbeta{\mat{\beta}}%

\global\long\def\vtheta{\mat{\theta}}%

\global\long\def\veta{\mat{\eta}}%

\global\long\def\vmu{\mat{\mu}}%

\global\long\def\vrho{\mat{\rho}}%

\global\long\def\matN{\ensuremath{{\bm{\mathrm{N}}}}}%

\global\long\def\matA{\ensuremath{{\bm{\mathrm{A}}}}}%

\global\long\def\matB{\ensuremath{{\bm{\mathrm{B}}}}}%

\global\long\def\matC{\ensuremath{{\bm{\mathrm{C}}}}}%

\global\long\def\matD{\ensuremath{{\bm{\mathrm{D}}}}}%

\global\long\def\matP{\ensuremath{{\bm{\mathrm{P}}}}}%

\global\long\def\matU{\ensuremath{{\bm{\mathrm{U}}}}}%

\global\long\def\matV{\ensuremath{{\bm{\mathrm{V}}}}}%

\global\long\def\matM{\ensuremath{{\bm{\mathrm{M}}}}}%

\global\long\def\matR{\mat R}%

\global\long\def\matW{\mat W}%

\global\long\def\matK{\mat K}%

\global\long\def\matQ{\mat Q}%

\global\long\def\matS{\mat S}%

\global\long\def\matY{\mat Y}%

\global\long\def\matX{\mat X}%

\global\long\def\matI{\mat I}%

\global\long\def\matJ{\mat J}%

\global\long\def\matZ{\mat Z}%

\global\long\def\matL{\mat L}%

\global\long\def\S#1{{\mathbb{S}_{N}[#1]}}%

\global\long\def\IS#1{{\mathbb{S}_{N}^{-1}[#1]}}%

\global\long\def\PN{\mathbb{P}_{N}}%

\global\long\def\TNormS#1{\|#1\|_{2}^{2}}%

\global\long\def\TNorm#1{\|#1\|_{2}}%

\global\long\def\InfNorm#1{\|#1\|_{\infty}}%

\global\long\def\FNorm#1{\|#1\|_{F}}%

\global\long\def\UNorm#1{\|#1\|_{\matU}}%

\global\long\def\UNormS#1{\|#1\|_{\matU}^{2}}%

\global\long\def\UINormS#1{\|#1\|_{\matU^{-1}}^{2}}%

\global\long\def\ANorm#1{\|#1\|_{\matA}}%

\global\long\def\BNorm#1{\|#1\|_{\mat B}}%

\global\long\def\HNormS#1{\|#1\|_{\H}^{2}}%

\global\long\def\XNormS#1#2{\|#1\|_{#2}^{2}}%

\global\long\def\AINormS#1{\|#1\|_{\matA^{-1}}^{2}}%

\global\long\def\BINormS#1{\|#1\|_{\matB^{-1}}^{2}}%

\global\long\def\BINorm#1{\|#1\|_{\matB^{-1}}}%

\global\long\def\ONorm#1#2{\|#1\|_{#2}}%

\global\long\def\T{\textsc{T}}%

\global\long\def\pinv{\textsc{+}}%

\global\long\def\Expect#1{{\mathbb{E}}\left[#1\right]}%

\global\long\def\ExpectC#1#2{{\mathbb{E}}_{#1}\left[#2\right]}%

\global\long\def\dotprod#1#2#3{(#1,#2)_{#3}}%

\global\long\def\dotprodsqr#1#2{(#1,#2)^{2}}%

\global\long\def\Trace#1{{\bf Tr}\left(#1\right)}%

\global\long\def\realpart#1{{\bf Re}\left(#1\right)}%

\global\long\def\nnz#1{{\bf nnz}\left(#1\right)}%

\global\long\def\range#1{{\bf range}\left(#1\right)}%

\global\long\def\nully#1{{\bf null}\left(#1\right)}%

\global\long\def\vecmat#1{{\bf vec}\left(#1\right)}%

\global\long\def\vol#1{{\bf vol}\left(#1\right)}%

\global\long\def\rank#1{{\bf rank}\left(#1\right)}%

\global\long\def\diag#1{{\bf diag}\left(#1\right)}%

\global\long\def\erfc#1{{\bf erfc}\left(#1\right)}%

\global\long\def\grad#1{{\bf grad}#1}%

\global\long\def\st{\,\,\,\text{s.t.}\,\,\,}%

\global\long\def\KL#1#2{D_{{\bf KL}}\left(#1,#2\right)}%

\title{Gauss-Legendre Features for Gaussian Process Regression}
\author{Paz Fink Shustin, Haim Avron}
\maketitle
\begin{abstract}
Gaussian processes provide a powerful probabilistic kernel learning
framework, which allows learning high quality nonparametric regression
models via methods such as Gaussian process regression. Nevertheless,
the learning phase of Gaussian process regression requires massive
computations which are not realistic for large datasets. In this paper,
we present a Gauss-Legendre quadrature based approach for scaling
up Gaussian process regression via a low rank approximation of the
kernel matrix. We utilize the structure of the low rank approximation
to achieve effective hyperparameter learning, training and prediction.
Our method is very much inspired by the well-known random Fourier
features approach, which also builds low-rank approximations via numerical
integration. However, our method is capable of generating high quality
approximation to the kernel using an amount of features which is poly-logarithmic
in the number of training points, while similar guarantees will require
an amount that is at the very least linear in the number of training
points when random Fourier features. Furthermore, the structure of
the low-rank approximation that our method builds is subtly different
from the one generated by random Fourier features, and this enables
much more efficient hyperparameter learning. The utility of our method
for learning with low-dimensional datasets is demonstrated using numerical
experiments.
\end{abstract}

\section{Introduction}

Gaussian processes (GPs)~\cite{williams2006gaussian} provide a powerful
probabilistic kernel learning framework, which allows learning high
quality nonparametric regression models via methods such as Gaussian
process regression (GPR). Indeed, GP based methods are widely used
in machine learning and statistics. They have been applied to a wide
variety of problems, such as data visualization, Bayesian optimization
\cite{snoek2012practical}, modeling dynamics and spatial data analysis~\cite{stein1999interpolation}.
One of the key advantages of the GP formulation of kernel regression
is that the marginal likelihood is a function of the kernel hyperparameters,
and that it can be computed via a closed-form formula. By maximizing
the marginal likelihood, one can learn the hyperparameters from the
data, thereby tuning the method in a principled manner.

However, learning GPs comes with an hefty computational price-tag.
Given a training set of $n$ points of dimension $d$, exact GPR requires
solving a (usually dense) linear equation, and thus requires $O(n^{3})$
FLOPs. Prediction costs $O(nd)$ FLOPs per test point. Such costs
are problematic for datasets with more than a few thousand points.
The situation is even more severe if we consider the hyperparameter
learning phase: here the cost is $O(n^{3})$ FLOPs per hyperparameter
in a learning iteration (assuming we use a first-order optimization
method). Hyperparameter learning of exact kernel models on large-scale
data is even more unrealistic than training such models.

Given the ubiquity of GPs, it is unsurprising that there is a rich
literature on scaling GP-based method, e.g. \cite{quinonero2005unifying}
and \cite{wilson2015kernel}. One attractive approach is to approximate
the kernel matrix (also known as covariance matrix) $\matK_{\vtheta}$
as a sum of a diagonal matrix (often a multiple of the identity) and
a low rank matrix~\cite{stein2014limitations}:
\begin{equation}
\matK_{\vtheta}\approx\matZ\matW(\vtheta)\matZ^{\conj}+\matD(\vtheta)\label{eq:low-rank-gram}
\end{equation}
In the above, $\matK_{\vtheta}\in\R^{n\times n}$ denotes the kernel
matrix, where the subscript $\vtheta$ denotes the dependence of the
kernel matrix on the hyperparameters $\vtheta$ (discussion of our
notation appears in Section~\ref{subsec:notation}), $\matZ$ has
$s\ll n$ columns, and $\matW(\vtheta),\matD(\vtheta)$ are diagonal
matrices. The various steps of GPR can be much more efficiently conducted
on a kernel whose kernel matrix has the structure of the righthand
side of Eq.~(\ref{eq:low-rank-gram}), e.g. training takes $O(ns^{2})$
(see Section~(\ref{subsec:efficient-gpr}) for details on efficient
GPR with low-rank approximations with an even more restricted structure
in which $\matD(\vtheta)$ is a multiple of the identity).

In the kernel learning literature, methods for forming a low rank
approximation of kernels can be roughly split into two approaches:
methods that use data-dependent basis functions, and methods that
use independent basis functions. An example for the first kind is
the Nyström method \cite{WilliamsSeeger01}. Such methods utilize
the given training data, and thus may outperform methods that use
independent basis functions, especially when there is a large gap
in the eigenspectrum. However, data dependence can incur additional
costs. For example, the Nyström method requires keeping some of the
data as part of the model.

Another class of methods for building low rank approximations of kernel
matrices are methods that use independent basis functions, and thus
approximate the kernel function directly. One such important and highly
influential method is the \emph{random Fourier features} approach
suggested by Rahimi and Recht in 2007~\cite{rahimi2008random}. Following
the publication of~\cite{rahimi2008random}, there has been extensive
research on random features, including works that attempt to improve
the approximation quality of the method (e.g.,~\cite{sutherland2015error,ChoromanskiEtAl18}),
works that focused on using in random features to learn huge datasets
(e.g.~\cite{HuangEtAl14,AvronSindhwani16}), and works that focused
on theoretical analysis of random features (e.g.~\cite{yang2012nystrom,sriperumbudur2015optimal,avron2017random}).
The previous list is far from exhaustive. In the context of our work,
worth mentioning is \cite{avron2017random} which showed that if the
kernel matrix of the approximate kernel spectrally approximates the
kernel matrix of the true kernel then the excess risk when using kernel
ridge regression with the approximate kernel is not much larger than
the excess risk when using the true kernel.

Random Fourier features, and random features methods in general, are
based on writing the kernel function as an integral and then using
numerical integration schemes in order to construct a low rank approximation
of that function\footnote{A rank $k$ bivariate function $f(\x,\y)$ is a function that can
be written as $f(\x,\y)=\sum_{j=1}^{k}\sigma_{j}\phi_{j}(\x)\psi_{j}(\y)$
for some $\sigma_{1},\dots,\sigma_{k},\phi_{1},\dots,\phi_{k},\psi_{1},\dots,\psi_{k}$~\cite{TownsendTrefethen13}.}. In random Fourier features, a shift-invariant kernel is rewritten
as an integral via an application of Bochner's theorem, and Monte-Carlo
integration is used to build the low rank approximation. The use of
Quasi Monte-Carlo in lieu of Monte-Carlo integration was explored
in \cite{avron2016quasi}. Bach explored the connection between random
Fourier features and kernel quadrature rules in \cite{bach2017equivalence},
however without providing any practically useful explicit mappings
for kernels. Monte-Carlo and Quasi-Monte Carlo integration admit only
slow convergence rate. As a consequence, the number of features required
for spectral approximation when using Monte-Carlo or Quasi Monte-Carlo
integration must be polynomial in quality parameter of the spectral
approximation. In this paper we argue that in the context of Gaussian
process regression a stronger notion of spectral equivalence is required.
The slow convergence rate of Monte-Carlo or Quasi Monte-Carlo integration
implies that at best the number of features required for spectral
equivalence is linear in the number of training points, which is obviously
undesirable.

Random features approaches based on Monte-Carlo and Quasi Monte-Carlo
suffer from another serious defect when it comes to GPR: the low rank
approximation they build has the form
\begin{equation}
\matK_{\vtheta}\approx\matZ(\vtheta)\matZ(\vtheta)^{\conj}+\matD(\vtheta)\label{eq:low-rank-RFF}
\end{equation}
While for training and prediction, this structure works equally as
well as the structure in Eq.~(\ref{eq:low-rank-gram}), when it comes
to hyperparameter learning this is no longer the case; see Section~\ref{subsec:rff-problematic}.

One can construct faster converging low-rank approximations using
numerical quadrature rules such as Gaussian quadrature. Dao et al.
considered the use of Gaussian quadrature in the context of kernel
learning~\cite{dao2017gaussian}. Gaussian quadrature rule is a method
for numerically approximating \emph{weighted }integrals (i.e., integrals
of the form $\int_{-\infty}^{\infty}f(x)w(x)dx$ where $w(x)\geq0$
is a weight function) that is optimal in some formal sense. In the
context of approximating kernel functions, the weight function $w(\cdot)$
is determined by the kernel function and the value of the hyperparameters.
Once the weight function has been determined, in order to use a Gaussian
quadrature the nodes and weights corresponding to that particular
weight function must be computed. Efficient algorithms exist, but
these algorithms require the computation of integrals as well. For
a single kernel, that is when using a fixed value of the hyperparameters,
and when using a fixed number of quadrature features, computing the
nodes and weights is a one-time offline task. However, if the hyperparameters
are not fixed, e.g. when they are set using hyperparameter learning,
Gaussian quadrature becomes unrealistic. Furthermore, the fact that
the nodes and weights change with the hyperparameters implies that
we must use an approximation of the form of Eq.~(\ref{eq:low-rank-RFF})
and not of Eq.~(\ref{eq:low-rank-gram}), which is less desirable.
The connection between random Fourier features and quadrature rules
was also explored in \cite{munkhoeva2018quadrature}.

Low rank approximations for kernels matrices have also been widely
used in the statistics literature, and in particular the spatial statistics
literature \cite{cressie2008fixed,eidsvik2012approximate,BanerjeeEtAl08,finley2009improving,katzfuss2012bayesian}.
Possible limitations of the low rank approximation approach in the
context of spatial statistics have been noted in \cite{quinonero2005unifying,BanerjeeEtAl08,stein2007spatial,sang2011covariance},
and analyzed mathematically in \cite{stein2014limitations}. The use
of random features in the context of spatial statistics was explored
in \cite{ton2018spatial}.

In this paper we propose a quadrature based low-rank approximation
approach for efficient GPR involving a wide class of kernels which
includes shift-invariant kernels (i.e., stationary covariance functions).
Unlike previous literature which uses quadrature features in the context
of GPR, our method forms an approximation of the form of Eq.~(\ref{eq:low-rank-gram}),
and so is able to efficiently perform hyperparameter learning in addition
to training and prediction. Our method achieves this by using a fixed
set of quadrature nodes and weights, and designing the approximation
so that varying the hyperparameters corresponds to only changing the
integrand.

Specifically, our method uses a Gauss-Legendre quadrature. Gauss-Legendre
quadrature is a Gaussian quadrature for the uniform weight function
on a finite interval. Thus, the weight function does not change with
the hyperparameters, and with it the quadrature nodes and weight stay
fixed, whereas only the integrand varies. Changing only the integrand
translates to a simplified parametric form for the approximate kernel
matrix (Eq.~(\ref{eq:low-rank-gram})) which is more amenable to
efficient computations. Our proposed method, which we call \emph{Gauss-Legendre
Features, }is described in Section~\ref{sec:GL-Features}.

The Gauss-Legendre quadrature is designed to approximate integrals
with an integration area which is a finite interval. However, for
most widely-used kernels the integrand has infinite support. We address
this issue by utilizing the fact that for such kernels the integrand
decays quickly, so we can approximate the integral by truncating the
integration area. The truncation cutoff is determined by a parameter
of our method. Another parameter is the number of features (i.e.,
quadrature nodes) used in the approximation. In order to set these
two parameters correctly, we need a method for assessing the quality
of one kernel function approximation by another. To that end, we introduce
the notion of \emph{spectral equivalence}, and argue that if one parameterized
family of kernels is spectrally equivalent to another one, then that
first family is a good surrogate for the second family in the context
of GPR. These results are summarized in Section~\ref{sec:spectral}.

We rigorously analyze how to set the truncation cutoff and the number
of features to achieve spectral equivalence (these results are reported
in Section~\ref{sec:parameters}). Here another advantage of using
Gauss-Legendre quadrature becomes evident: the Gauss-Legendre quadrature
converges much faster the Monte-Carlo or Quasi Monte-Carlo integration,
so typically the number of features is sublinear in the training size.
Indeed, for widely used kernels like the Gaussian kernel and the Matèrn
kernel, the number of features required when using Gauss-Legendre
features is poly-logarithmic in the training size (see Section~\ref{sec:examples}).
Sublinear number of features is also likely achievable using Gaussian
quadrature (kernel learning using Gaussian quadrature is suggested
in \cite{dao2017gaussian}, however without proving spectral equivalence).
Yet, as explained this is rather problematic for hyperparameter learning,
and in general requires a large overhead for computing the quadrature
nodes and weights.

Finally, empirical results (Section~\ref{sec:Experiments}) clearly
demonstrate the superiority of our proposed method over classical
random Fourier features when conducting Gaussian process regression
on low-dimensional datasets.

\section{Preliminaries}

\subsection{\label{subsec:notation}Notation and Basic Definitions}

We consider all vectors as column vectors, unless otherwise stated.
For a vector $\x$ or a matrix $\mat A$, the notation $\x^{*}$ or
$\mat A^{*}$ denotes the Hermitian transpose. The $n\times n$ identity
matrix is denoted by $\mat I_{n}$. A Hermitian matrix $\mat A$ is
positive semidefinite (PSD) if $\x^{*}\mat A\x\geq0$ for every vector
$\x$. Also, for any Hermitian matrices $\mat A,\,\mat B$ of the
same size, the notation $\mat A\preceq\mat B$ means that $\mat B-\mat A$
is PSD.

We consider $n$ pairs of training data $(\x_{1},y_{1}),\ldots,(\x_{n},y_{n})\in{\cal X}\times{\cal Y}\subset\mathbb{R}^{d}\times\mathbb{R}$,
where $\x$ denotes the input vector of dimension $d$ and $y$ denotes
a scalar response. A \emph{kernel function }(aka \emph{covariance
function}) is a function $k:{\cal X}\times{\cal X}\to\mathbb{R}$
which is \emph{positive definite}, i.e. for every $m\in\mathbb{N}$
and $\x_{1},\ldots,\x_{m}\in\mathbb{R}^{d}$, the matrix $\matK\in\R^{m\times m}$
defined by $\mat K_{ij}=k(\x_{i},\x_{j})$ is PSD. The matrix $\matK$
is known by various names: \emph{kernel matrix, Gram matrix, covariance
matrix. }Given a dataset $\x_{1},\dots,\x_{n}\in\R^{d}$, we will
conveniently use $\matX$ to denote the $n$-by-$d$ matrix whose
rows are $\x_{1},\dots,\x_{n}$, and use $\matK(\matX,\matX)$ to
denote the kernel matrix corresponding to the kernel $k$ with data
$\matX$. For another kernel $\tilde{k}$ we will use $\tilde{\matK}(\matX,\matX)$
to denote the kernel matrix.

In many cases we will deal with parameterized families of kernels
$\{k_{\vtheta}\}_{\vtheta\in\Theta}$, where $\vtheta$ represents
the hyperparameters vector, and $\Theta$ is a set of possible parameters
values. The kernel matrix corresponding to $k_{\vtheta}$ is denoted
by $\matK_{\vtheta}(\matX,\matX)$. We also group the responses $y_{1},\dots,y_{n}$
into a single vector $\y\in\R^{n}$.

The \emph{Kullback--Leibler divergence} (abbreviated K\emph{L-divergence}
henceforth) is a well established metric for how much one distribution
is different from a reference distribution. We denote the KL-divergence
between two probability distributions ${\cal P}$ on ${\cal Q}$ by
$\KL{{\cal P}}{{\cal Q}}$, and recall the following is a well established
result\footnote{The exact definition of the KL-divergence is not important, since
we always use Eq.~(\ref{eq:KL_div}) when working with it.}: if ${\cal N}_{1}={\cal N}(\vmu_{0},\Sigma_{0})$ and ${\cal N}_{2}={\cal N}(\vmu_{1},\Sigma_{1})$
are two multivariate normal distributions, we have
\begin{equation}
\KL{{\cal N}_{1}}{{\cal N}_{2}}=\frac{1}{2}\Trace{\mat{\Sigma}_{1}^{-1}\mat{\Sigma}_{0}}+\frac{1}{2}\left(\mat{\mu}_{1}-\mat{\mu}_{0}\right)^{\T}\mat{\Sigma}_{1}^{-1}\left(\mat{\mu}_{1}-\mat{\mu}_{0}\right)+\frac{1}{2}\left(\log\det\mat{\Sigma}_{1}-\log\det\mat{\Sigma}_{0}\right)-\frac{n}{2}\label{eq:KL_div}
\end{equation}

\subsection{Gaussian Process Regression}

Gaussian Process Regression (GPR) is a Bayesian nonparametric approach
for regression. First, the following regression model is assumed:
\[
y=f(\x)+\varepsilon,\quad\varepsilon\overset{i.i.d}{\sim}\mathcal{N}\left(\mat 0,\sigma_{n}^{2}\right)
\]
($\sigma_{n}^{2}$ is a (hyper)parameter). Additionally, it is assumed
that $f$ is a \emph{Gaussian Process}, $f(\x)\sim{\cal GP}\left(\vmu\left(\x\right),k(\x,\x')\right)$,
where $k$ is the kernel function. This means that for any set of
data points $\x_{1},\dots,\x_{m}\in\mathbb{R}^{d}$ the vector $\f\in\R^{m}$
defined by $\f_{j}=f(\x_{j})$ ($j=1,\dots,m$) is a Gaussian random
vector with mean defined by $\mu(\matX)=[\mu(\x_{1})\cdots\mu(\x_{m})]^{\T}$
and covariance matrix $\matK(\matZ,\matZ)$. Throughout the paper
we assume, for the sake of simplicity, that the mean function $\mu(\x)$
is $0$. This simplifies the formulas while not really restricting
generality (a nonzero mean can be easily handled). Under these assumptions
$y\sim{\cal N}(0,\matK(\matX,\matX)+\sigma_{n}^{2}\matI)$. Under
these priors, the expected predictive value for $f(\x)$ at a test
$\x$ is 
\[
f(\x)\approx\matK(\x,\matX)\left(\matK(\matX,\matX)+\sigma_{n}^{2}\matI_{n}\right)^{-1}\y
\]
Consequently, training is conducted by computing the vector 
\[
\valpha\coloneqq\left(\matK(\matX,\matX)+\sigma_{n}^{2}\matI_{n}\right)^{-1}\y
\]
From these formulas we see that assuming that evaluating the kernel
function takes $O(d)$ operations and that $\valpha$ is computed
using direct factorization, training takes $O(n^{3})$ operations
and prediction takes $O(nd)$ operations.

The previous description is for a fixed kernel $k$. Typically, the
kernel $k_{\vtheta}$ has \textit{hyperparameters} which we represent
throughout the paper by the vector $\mat{\theta}$. The hyperparameters
are usually constrainted to some possible set of hyperparameters values
$\Theta$, and thus defines a parameterized family of kernels $\{k_{\vtheta}\}_{\vtheta\in\Theta}$.
\emph{Hyperparameter learning }refers to the process of determining
the value of the hyperparameters directly from the training data,
and is considered one of the important advantages of the GP framework.
This is typically conducted by maximizing the log marginal likelihood:

\begin{equation}
{\cal L}(\vtheta)\coloneqq-\frac{1}{2}\y^{\T}\left(\mat K_{\vtheta}(\matX,\matX)+\sigma_{n}^{2}\matI_{n}\right)^{-1}\y-\frac{1}{2}\log\det(\mat K_{\vtheta}(\matX,\matX)+\sigma_{n}^{2}\matI_{n})-\frac{n}{2}\log2\pi\label{eq:likelihood}
\end{equation}
In order to maximize ${\cal L}(\vtheta)$ using a first-order optimization
method it is required to compute its gradients. Using direct methods,
computing the gradient takes $O(n^{3}|\vtheta|)$ where $|\vtheta|$
represents the number of hyperparameters in $\vtheta$.

\subsection{Random Fourier Features}

Random Fourier Features (RFF)~\cite{rahimi2008random}, is one of
the most popular methods for constructing a low rank approximation
of kernels and scaling up kernel methods. The method targets shift-invariant
kernels, i.e. kernels of the form $k(\x,\x')=k_{0}(\x-\x')$ for a
positive definite function $k_{0}(\cdot)$.

RFF is motivated by a simple consequence of Bochner\textquoteright s
Theorem: for every shift-invariant kernel for which $k_{0}(0)=\sigma_{f}^{2}$
there is a probability measure $\mu$ and possibly a corresponding
probability density function $p(\cdot)$, both on $\R^{d}$, such
that
\[
k\left(\x,\x'\right)=\sigma_{f}^{2}\int_{\mathbb{R}^{d}}e^{-2\pi i\mat{\eta}^{\T}\left(\x-\x'\right)}d\mu(\mat{\eta})=\sigma_{f}^{2}\int_{\mathbb{R}^{d}}e^{-2\pi i\mat{\eta}^{\T}\left(\x-\x'\right)}p(\mat{\eta})d\mat{\eta}
\]
Let us assume that the density $p(\cdot)$ exist. If one chooses $\mat{\eta}_{1},\ldots,\mat{\eta}_{s}$
randomly according to $p(\cdot)$, and defines $\varphi(\x)=\frac{1}{\sqrt{s}}\left(e^{-2\pi i\mat{\eta}_{1}^{\T}\x},\ldots,e^{-2\pi i\mat{\eta}_{s}^{\T}\x}\right)^{*}$,
then
\[
k\left(\x,\x'\right)=\sigma_{f}^{2}\mathbb{E}_{\veta_{1},\dots\veta_{s}}\left[\varphi\left(\x\right)^{*}\varphi\left(\x'\right)\right]\,.
\]
So, an approximated kernel can be defined:
\[
\tilde{k}^{\text{(RFF)}}(\x,\x')\coloneqq\varphi(\x)^{*}\varphi(\x')=\frac{\sigma_{f}^{2}}{s}\sum_{j=1}^{s}e^{-2\pi i\mat{\eta}_{j}^{\T}(\x-\x')}\,.
\]
The kernel matrix corresponding to the approximate kernel is
\[
\tilde{\matK}^{\text{(RFF)}}(\matX,\matX)=\matZ\matZ^{\conj}
\]
where $\mat Z\in\mathbb{C}^{n\times s}$ to be the matrix whose $m^{th}$
row is $\varphi(\x_{m})^{*}$. The low rank structure of $\tilde{\matK}^{\text{(RFF)}}(\matX,\matX)$
allows more efficient training ($O(ns^{2})$) and predictions ($O(sd$)),
which are attractive if $s\ll n$.

The previous description is for a fixed kernel (and fixed hyperparameters).
When using GPR with hyperparameter learning we are dealing parameterized
family of kernels $\{k_{\vtheta}\}_{\vtheta\in\Theta}$. This case
has not been considered in Rahimi and Recht original work~\cite{rahimi2008random}.
We discuss it in Section~\ref{subsec:rff-problematic}.

\section{\label{sec:spectral}Spectrally Equivalent Kernel Approximations}

Our strategy for scaling up GPR is based on approximating the kernel
$k_{\vtheta}$ by an approximate kernel $\tilde{k}_{\vtheta}$ that
is low-rank in some sense which will become apparent in the next section.
This raises the question: how can we determine whether $\tilde{k}_{\vtheta}$
indeed approximates $k_{\vtheta}$ well? In \cite{avron2017random},
the authors suggested that in the context of kernel ridge regression,
\emph{spectral approximations} of the kernel matrices allows us to
reason about how well one kernel is approximated by another. The argument
in~\cite{avron2017random} is based on risk bounds for given fixed
hyperparameters, and so is less appropriate for GPR where hyperparameter
learning is common practice. In this section, we introduce the notion
\emph{spectral equivalence}, a stronger form of spectral approximation,
and connect it to hyperparameter learning in GPR.

Assume a bounding set $\X\subseteq\R^{d}$ for the data, then given
a dataset $(\x_{1},y_{1}),\dots,(\x_{n},y_{n})\in\X\times\R$, the
general assumption when using a kernel $k$ is that 
\[
\y\sim{\cal N}(\vmu,\matK(\matX,\matX))
\]
If, however, we would have used the kernel $\tilde{k}$, then the
assumption would have been
\[
\y\sim{\cal N}(\vmu,\tilde{\matK}(\matX,\matX))
\]
Thus, a measure on how well $\tilde{k}$ approximates $k$ might be
devised by measuring how much ${\cal N}(\vmu,\tilde{\matK}(\matX,\matX))$
is different from ${\cal N}(\vmu,\matK(\matX,\matX))$. The KL-divergence
is a well-established measure on how different one probability distribution
is from a reference distribution, so arguably, $\tilde{k}$ approximates
$k$ well if the KL-divergence $\KL{{\cal N}(\vmu,\matK(\matX,\matX))}{{\cal N}(\vmu,\tilde{\matK}(\matX,\matX))}$
is small. Indeed, the use $\KL{{\cal N}(\vmu,\matK(\matX,\matX))}{{\cal N}(\vmu,\tilde{\matK}(\matX,\matX))}$
as such a measure was suggested in the literature on spatial data
analysis~\cite{BanerjeeEtAl08,SangHuang12,stein2014limitations}.

The notion of spectral equivalence, which we develop below, is a measure
on how two matrices are close to one another. To connect it to the
KL-divergence, which we use to measure how well $\tilde{k}$ approximates
$k$, we have the following lemma, which implies that if the covariance
matrices of two multivariate distributions are close, then the KL-divergence
is small.
\begin{lem}
\label{lem:kl-small}Suppose that $\vmu\in\R^{n}$ and $\mat{\Sigma}_{0,},\mat{\Sigma}_{1}\in\R^{n\times n}$
are two symmetric positive definite matrices. Suppose that 
\begin{equation}
(1-n^{-1})\mat{\Sigma}_{0}\preceq\mat{\Sigma}_{1}\preceq(1+n^{-1})\mat{\Sigma}_{0}\label{eq:spec_eq-1}
\end{equation}
Then,
\[
\KL{{\cal N}(\vmu,\Sigma_{0})}{{\cal N}(\vmu,\Sigma_{1})}\leq1+O(n^{-1})
\]
\end{lem}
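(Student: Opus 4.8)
The plan is to start from the closed-form expression in Eq.~(\ref{eq:KL_div}). Since the two Gaussians share the mean $\vmu$, the quadratic term drops and
\[
\KL{{\cal N}(\vmu,\Sigma_{0})}{{\cal N}(\vmu,\Sigma_{1})}=\tfrac{1}{2}\Trace{\Sigma_{1}^{-1}\Sigma_{0}}+\tfrac{1}{2}\left(\log\det\Sigma_{1}-\log\det\Sigma_{0}\right)-\tfrac{n}{2}.
\]
I would then diagonalize by introducing the symmetric positive definite matrix $\matN\coloneqq\Sigma_{0}^{-1/2}\Sigma_{1}\Sigma_{0}^{-1/2}$. Conjugating the hypothesis~(\ref{eq:spec_eq-1}) by $\Sigma_{0}^{-1/2}$ (a congruence, hence PSD-preserving) shows it is equivalent to $(1-n^{-1})\matI_{n}\preceq\matN\preceq(1+n^{-1})\matI_{n}$, so the eigenvalues $\nu_{1},\dots,\nu_{n}$ of $\matN$ all lie in $[1-n^{-1},1+n^{-1}]$.

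Next I would rewrite the KL-divergence purely in terms of the $\nu_{i}$. By the cyclic property of the trace, $\Trace{\Sigma_{1}^{-1}\Sigma_{0}}=\Trace{\matN^{-1}}=\sum_{i}\nu_{i}^{-1}$, while $\log\det\Sigma_{1}-\log\det\Sigma_{0}=\log\det\matN=\sum_{i}\log\nu_{i}$. Hence
\[
\KL{{\cal N}(\vmu,\Sigma_{0})}{{\cal N}(\vmu,\Sigma_{1})}=\tfrac{1}{2}\sum_{i=1}^{n}g(\nu_{i}),\qquad g(t)\coloneqq\frac{1}{t}+\log t-1.
\]
The last step is an elementary analysis of $g$ near $t=1$: one has $g(1)=0$, $g'(t)=(t-1)/t^{2}$ so $g'(1)=0$, and $g''(t)=(2-t)/t^{3}$, which is positive and bounded by an absolute constant on $[1/2,3/2]$. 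For $n\ge2$ we have $[1-n^{-1},1+n^{-1}]\subseteq[1/2,3/2]$, so Taylor's theorem with Lagrange remainder gives $0\le g(\nu_{i})\le C(\nu_{i}-1)^{2}\le Cn^{-2}$ for an absolute constant $C$. Summing the $n$ contributions yields $\KL{{\cal N}(\vmu,\Sigma_{0})}{{\cal N}(\vmu,\Sigma_{1})}\le\tfrac{1}{2}Cn^{-1}=O(n^{-1})\le1+O(n^{-1})$.

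I do not anticipate a real obstacle. The only place needing care is the uniform quadratic control of $g$: one must verify that both $g$ and $g'$ vanish at $1$ and that $g''$ stays bounded on the interval where the $\nu_{i}$ live, so that each term is $O(n^{-2})$ and the sum over the $n$ coordinates is still $O(n^{-1})$. It is also worth remarking that this argument in fact proves the stronger bound $O(n^{-1})$ rather than $1+O(n^{-1})$, and that the degenerate case $n=1$ — where the left inequality in~(\ref{eq:spec_eq-1}) is vacuous — is immaterial to the claimed asymptotic statement.
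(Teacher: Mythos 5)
Your proof is correct, and it takes a genuinely different route from the paper's. The paper bounds the two terms of Eq.~(\ref{eq:KL_div}) separately: the trace term via the eigenvalue bounds on $\mat{\Sigma}_{0}^{1/2}\mat{\Sigma}_{1}^{-1}\mat{\Sigma}_{0}^{1/2}$ (giving $\Trace{\mat{\Sigma}_{1}^{-1}\mat{\Sigma}_{0}}\leq n(1+\tfrac{1}{n-1})$, hence a contribution of roughly $\tfrac12$ after subtracting $\tfrac n2$), and the log-determinant term via Lemma~\ref{lem:spec_logdet-1} (another contribution of roughly $\tfrac12$), which is why its final bound is $1+O(n^{-1})$ and not better. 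You instead fold both terms into the single spectral function $g(t)=t^{-1}+\log t-1$ evaluated at the eigenvalues $\nu_{i}$ of $\mat{\Sigma}_{0}^{-1/2}\mat{\Sigma}_{1}\mat{\Sigma}_{0}^{-1/2}$; since $g$ and $g'$ vanish at $t=1$ and $g''$ is uniformly bounded on $[1/2,3/2]$, each eigenvalue contributes only $O(n^{-2})$ and the sum is $O(n^{-1})$. This exposes the cancellation between the trace and log-det terms that the paper's term-by-term estimates throw away, and yields the strictly stronger conclusion $\KL{{\cal N}(\vmu,\Sigma_{0})}{{\cal N}(\vmu,\Sigma_{1})}=O(n^{-1})$, which of course implies the stated $1+O(n^{-1})$. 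Your remarks on the $n=1$ degeneracy and on the need to control $g''$ uniformly are exactly the right points of care; both your argument and the paper's implicitly require $n\geq2$.
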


We first need the following Lemma.
\begin{lem}
\label{lem:spec_logdet-1}Suppose that $\matA$ and $\matB$ are two
symmetric positive definite matrices of order $n\times n$, such that
\begin{equation}
(1-n^{-1})\matB\preceq\matA\preceq(1+n^{-1})\matB\,.\label{eq:spec-eq-AB}
\end{equation}
Then, there exists $\gamma_{1},\dots,\gamma_{n}\in[-n^{-1},n^{-1}]$
such that 
\[
\log\det\matA-\log\det\matB=\sum_{i=1}^{n}\log(1+\gamma_{i})\,\text{.}
\]
\end{lem}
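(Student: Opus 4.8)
The plan is to reduce the comparison of the two log-determinants to a statement about the eigenvalues of a single matrix, namely $\matB^{-1/2}\matA\matB^{-1/2}$ (or equivalently the generalized eigenvalues of the pencil $(\matA,\matB)$). First I would observe that since $\matB$ is symmetric positive definite it has a symmetric positive definite square root $\matB^{1/2}$, and define $\matM \coloneqq \matB^{-1/2}\matA\matB^{-1/2}$, which is symmetric positive definite. The sandwiching hypothesis~(\ref{eq:spec-eq-AB}) is equivalent, after conjugating by $\matB^{-1/2}$ (a congruence that preserves the Loewner order), to $(1-n^{-1})\matI_n \preceq \matM \preceq (1+n^{-1})\matI_n$, which says precisely that every eigenvalue $\lambda_i$ of $\matM$ lies in the interval $[1-n^{-1},\,1+n^{-1}]$.

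Next I would compute the log-determinant difference in terms of these eigenvalues. Using multiplicativity of the determinant,
\[
\det\matM = \det(\matB^{-1/2}\matA\matB^{-1/2}) = \frac{\det\matA}{\det\matB},
\]
so $\log\det\matA - \log\det\matB = \log\det\matM = \sum_{i=1}^n \log\lambda_i$, where $\lambda_1,\dots,\lambda_n$ are the eigenvalues of $\matM$. Finally, for each $i$ set $\gamma_i \coloneqq \lambda_i - 1$; then $\log\lambda_i = \log(1+\gamma_i)$ and, by the eigenvalue bound from the previous step, $\gamma_i \in [-n^{-1},\,n^{-1}]$. Summing gives the claimed identity $\log\det\matA - \log\det\matB = \sum_{i=1}^n \log(1+\gamma_i)$ with $\gamma_i \in [-n^{-1}, n^{-1}]$.

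There is no real obstacle here; the only points requiring a little care are (i) justifying that conjugation by $\matB^{-1/2}$ preserves the ordering $\preceq$ — which follows because $\matX \preceq \matY$ iff $\v^{\T}(\matY-\matX)\v \ge 0$ for all $\v$, and substituting $\v = \matB^{-1/2}\w$ ranges over all $\w$ since $\matB^{-1/2}$ is invertible — and (ii) noting that $\matA$ positive definite forces all $\lambda_i > 0$, so that $\log\lambda_i$ is well-defined (this is in any case implied by $\lambda_i \ge 1 - n^{-1} > 0$ for $n \ge 2$; the $n=1$ case is trivial since then $\matA = \matB$). Everything else is a direct computation.
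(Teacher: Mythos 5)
Your proof is correct, but it takes a genuinely different route from the paper's. The paper works with the sorted eigenvalues $\lambda_{1},\dots,\lambda_{n}$ of $\matB$ and $\tilde{\lambda}_{1},\dots,\tilde{\lambda}_{n}$ of $\matA$ separately, and asserts that Eq.~(\ref{eq:spec-eq-AB}) forces $\tilde{\lambda}_{i}=(1+\gamma_{i})\lambda_{i}$ with $\gamma_{i}\in[-n^{-1},n^{-1}]$; this step is true but tacitly relies on Weyl's monotonicity principle (via Courant--Fischer) to pass from the Loewner ordering to an ordering of the individually sorted eigenvalues. You instead conjugate by $\matB^{-1/2}$ to reduce everything to the single matrix $\matM=\matB^{-1/2}\matA\matB^{-1/2}$, whose eigenvalues are sandwiched in $[1-n^{-1},1+n^{-1}]$ directly by the congruence-invariance of $\preceq$, and then use $\det\matM=\det\matA/\det\matB$. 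Your $\gamma_{i}$ (the generalized eigenvalues of the pencil $(\matA,\matB)$ minus one) are in general different numbers from the paper's ratios of sorted eigenvalues, but both satisfy the existence claim. Your argument is slightly more self-contained, needing only the multiplicativity of the determinant and the substitution $\v=\matB^{-1/2}\w$, and it dovetails with how Lemma~\ref{lem:kl-small} is proved in the paper, which also manipulates $\mat{\Sigma}_{0}^{1/2}\mat{\Sigma}_{1}^{-1}\mat{\Sigma}_{0}^{1/2}$; the paper's version is shorter on the page but leans on an unproved (though standard) eigenvalue comparison.
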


\begin{proof}
Let $\lambda_{1},\dots,\lambda_{n}$ denote the sorted eigenvalues
of $\matB$, and $\tilde{\lambda}_{1},\dots,\tilde{\lambda}_{n}$
denote the sorted eigenvalues of $\matA$, so
\[
\log\det\matB=\sum_{i=1}^{n}\log\lambda_{i},\quad\log\det\matA=\sum_{i=1}^{n}\log\tilde{\lambda}_{i}\,.
\]
 Eq.~(\ref{eq:spec-eq-AB}) implies that there exist $\gamma_{1},\dots,\gamma_{n}\in[-n^{-1},n^{-1}]$
such that $\tilde{\lambda}_{i}=(1+\gamma_{i})\lambda_{i}$. Hence,
\begin{eqnarray*}
\log\det\matA & = & \sum_{i=1}^{n}\log\tilde{\lambda}_{i}\\
 & = & \sum_{i=1}^{n}\log(1+\gamma_{i})\lambda_{i}\\
 & = & \log\det\matB+\sum_{i=1}^{n}\log(1+\gamma_{i})
\end{eqnarray*}
and that completes the proof.
\end{proof}
\begin{proof}
[Proof of Lemma \ref{lem:kl-small}]Since for two symmetric positive
definite matrices $\matA$ and $\matB$, $\matA\preceq\matB$ implies
$\matB^{-1}\preceq\matA^{-1}$, Eq.~(\ref{eq:spec_eq-1}) implies
that
\[
\left(1-\frac{1}{n+1}\right)\mat{\Sigma}_{0}^{-1}=\frac{1}{1+n^{-1}}\mat{\Sigma}_{0}^{-1}\preceq\mat{\Sigma}_{1}^{-1}\preceq\frac{1}{1-n^{-1}}\mat{\Sigma}_{0}^{-1}=\left(1+\frac{1}{n-1}\right)\mat{\Sigma}_{0}^{-1}\,.
\]
Multiplying by $\mat{\Sigma}_{0}^{1/2}$ on the right and left sides
gives
\[
\left(1-\frac{1}{n+1}\right)\mat I_{n}\preceq\mat{\Sigma}_{0}^{1/2}\mat{\Sigma}_{1}^{-1}\mat{\Sigma}_{0}^{1/2}\preceq\left(1+\frac{1}{n-1}\right)\mat I_{n}
\]
i.e., the eigenvalues of $\mat{\Sigma}_{0}^{1/2}\mat{\Sigma}_{1}^{-1}\mat{\Sigma}_{0}^{1/2}$
are bounded in the interval $[1-(n+1)^{-1},1+(n-1)^{-1}]$. Thus,
\[
\Trace{\mat{\Sigma}_{1}^{-1}\mat{\Sigma}_{0}}=\Trace{\mat{\Sigma}_{1}^{-1}\mat{\Sigma}_{0}^{1/2}\mat{\Sigma}_{0}^{1/2}}=\Trace{\mat{\Sigma}_{0}^{1/2}\mat{\Sigma}_{1}^{-1}\mat{\Sigma}_{0}^{1/2}}\leq n\left(1+\frac{1}{n-1}\right)\,.
\]
Also, from Lemma \ref{lem:spec_logdet-1}, there exist $\gamma_{1},\dots,\gamma_{n}\in[-n^{-1},n^{-1}]$
such that
\[
\log\det\mat{\Sigma}_{1}-\log\det\mat{\Sigma}_{0}=\sum_{i=1}^{n}\log(1+\gamma_{i})\,.
\]
 Using Eq. (\ref{eq:KL_div}), we obtain
\begin{eqnarray*}
\KL{{\cal N}(\vmu,\Sigma_{0})}{{\cal N}(\vmu,\Sigma_{1})} & = & \frac{1}{2}\Trace{\mat{\Sigma}_{1}^{-1}\mat{\Sigma}_{0}}+\frac{1}{2}(\log\det\mat{\Sigma}_{1}-\log\det\mat{\Sigma}_{0})-\frac{n}{2}\\
 & \leq & \frac{n}{2}\left(1+\frac{1}{n-1}\right)+\frac{1}{2}\sum_{i=1}^{n}\log\left(1+\gamma_{i}\right)-\frac{n}{2}\\
 & \leq & \frac{n}{2}\left(\frac{1}{n-1}+\log\left(1+\frac{1}{n}\right)\right)\\
 & = & \frac{1}{2}+\frac{1}{2(n-1)}+\frac{n}{2}\left(\frac{1}{n}+O\left(\frac{1}{n^{2}}\right)\right)\\
 & = & 1+O\left(\frac{1}{n}\right)\,.
\end{eqnarray*}
\end{proof}
Lemma~\ref{lem:kl-small} motivates the following definitions:
\begin{defn}
We say that a $n$-by-$n$ symmetric matrix $\matA$ is \emph{spectrally
equivalent }to another $n$-by-$n$ symmetric matrix $\matB$ if 
\begin{equation}
(1-n^{-1})\matB\preceq\matA\preceq(1+n^{-1})\matB.\label{eq:spec_eq}
\end{equation}
\end{defn}

\begin{defn}
Let $n\geq1$ be an integer, and $\X$ be a data domain. Two positive
definite kernels $k$ and $\tilde{k}$ are \textit{\textcolor{black}{$n$-spectrally
equivalent}} \emph{on domain $\X$} if for every $\matX$ with $n$
rows in $\X$, the kernel matrix $\tilde{\mat K}(\matX,\matX)$ is
spectrally equivalent to the kernel matrix $\mat K(\matX,\matX)$.
\end{defn}

The last definition uses two specific kernels, $k$ and $\tilde{k}$.
In GPR, we usually use a parameterized family of kernels $\{k_{\vtheta}\}_{\vtheta\in\Theta}$,
where $\vtheta$ represents the hyperparameters, and $\Theta$ is
a set of possible parameter values. We generally assume that $\Theta$
is bounded. Boundedness of $\Theta$ is necessary, since without it,
it is possible to drive the kernel matrix to identity, thereby making
it impossible to approximate it using a low rank matrix. We then approximate
each kernel $k_{\vtheta}$ by $\tilde{k}_{\vtheta}$, that is we use
the parameterized family $\{\tilde{k}_{\vtheta}\}_{\vtheta\in\Theta}$.
We say that the parameterized family $\{\tilde{k}_{\vtheta}\}_{\vtheta\in\Theta}$
approximates the parameterized family $\{k_{\vtheta}\}_{\vtheta\in\Theta}$
well if for every $\vtheta\in\Theta$ the kernel $\tilde{k}_{\vtheta}$
approximates $k_{\vtheta}$ well, as is captured by the following
definition.
\begin{defn}
Two parameterized families of positive definite kernels $\{k_{\vtheta}\}_{\vtheta\in\Theta}$
and $\{\tilde{k}_{\vtheta}\}_{\vtheta\in\Theta}$ are said to be \textit{\textcolor{black}{$n$-spectrally
equivalent}} on domain $\X$ if for every $\mat{\theta}\in\mat{\Theta}$,
$k_{\mat{\theta}}$ and $\tilde{k}_{\mat{\theta}}$ are $n$-spectrally
equivalent over $\X$.
\end{defn}

In light of Lemma~\ref{lem:kl-small}, if two parameterized families
$\{k_{\vtheta}\}_{\vtheta\in\Theta}$ and $\{\tilde{k}_{\vtheta}\}_{\vtheta\in\Theta}$
are \textit{\textcolor{black}{\emph{$n$-spectrally equivalent, then
for any parameters $\vtheta$ and any dataset consisting of $n$ data
points, the distributions on the response assumed by the two GP models
induced by these families are close in the sense that the KL-divergence
is close to $1$.}}}

\section{\label{sec:GL-Features}Gauss-Legendre Features}

In this section, we present our proposed method (Gauss-Legendre Features),
and show how it can be used to perform efficient Gaussian process
regression. Our method includes two important parameter vectors: $\matU$
and $\s$. In the next section we show how these parameters can be
set in order to obtain an approximation that is spectrally equivalent
to the true kernel.

\subsection{Feature Map}

We begin by describing the Gauss-Legendre feature map. The proposed
method builds feature maps for kernel families that can be written
in the following form:
\begin{equation}
k_{\vtheta}(\x,\x')=\sigma_{f}^{2}\int_{\R^{d}}\varphi(\x,\veta)\varphi(\x',\veta)^{*}p(\veta;\vtheta_{0})d\veta+\sigma_{n}^{2}\gamma(\x-\x')\label{eq:kernel-form}
\end{equation}
In the above, 
\[
\gamma(\z)\coloneqq\begin{cases}
1 & \z=0\\
0 & \z\neq0
\end{cases}\,,
\]
the function $\varphi:\X\times\R^{d}\to\C$ is such that for every
$\x\in\X$ the function $\varphi(\x,\cdot)$ is even-symmetric (i.e.,
for every $\veta\in\R^{d}$, $\varphi(\x,\veta)=\varphi(\x,-\veta)^{*}$),
$\vtheta=[\vtheta_{0},\sigma_{f}^{2},\sigma_{n}^{2}]$, and for every
$\vtheta_{0}$ the function $p(\cdot;\vtheta_{0})$ is an even probability
density on $\R^{d}$. Note that $\vtheta_{0}$ can be a vector.

Note that in Eq.~(\ref{eq:kernel-form}) we included a ridge term
$\sigma_{n}^{2}\gamma(\x-\x')$. Typically, the ridge term is omitted
from the kernel but appears in various equations involving the kernel
matrix due to Gaussian noise assumption in the GPR model. While we
could state our theory in the more traditional way of having the noise
term outside of the kernel, the definitions and theorems statements
will be somewhat more cumbersome. We found it more convenient to include
$\sigma_{n}^{2}$ as part of the vector of the parameter set $\vtheta$,
and include the ridge term in the kernel definition. The resulting
equations are the same.

There are quite a few kernel families that adhere to this structure.
For example, due to Bochener's theorem, a shift-invariant kernel with
an additional noise level term can be written in the form
\begin{equation}
k_{\vtheta}(\x,\x')=\sigma_{f}^{2}\int_{\R^{d}}e^{-i(\x-\x')^{\T}\veta}p(\veta;\vtheta_{0})d\veta+\sigma_{n}^{2}\gamma(\x-\x')\label{eq:base-kernel}
\end{equation}
So, we can use $\varphi(\x,\veta)=e^{-i\x^{\T}\veta}$ to cast shift
invariants kernels in the form of Eq.~(\ref{eq:kernel-form}). 

The underlying idea of Gauss-Legendre features is to first truncate
the integral Eq.~(\ref{eq:base-kernel}) to the domain ${\cal Q}_{\matU}=\prod_{k=1}^{d}[-U_{k},U_{k}]$,
for some $\matU=(U_{1},\dots,U_{d})^{\T}$, and then approximate the
truncated integral using a tensorized Gauss-Legendre quadrature. The
domain ${\cal Q}_{\matU}$ might depend on $\X$ and $\Theta$, but
not on the concrete dataset $\x_{1},\dots,\x_{n}$. Let $\text{(\ensuremath{\chi_{1}^{(m)},w_{1}^{(m)}),\dots,(\chi_{m}^{(m)},w_{m}^{(m)})}}$
denote the nodes and weights of the $m$-point Gauss-Legendre. Assume
we are given a list of quadrature size for each dimension: $\s=(s_{1},\dots,s_{d}$).
Let $s=\prod_{k=1}^{d}s_{k}$. The approximation then reads:
\begin{eqnarray}
k_{\vtheta}(\x,\x') & \approx & \sigma_{f}^{2}\int_{{\cal Q}_{U}}\varphi(\x,\veta)\varphi(\x',\veta)^{*}p(\veta;\vtheta_{0})d\veta+\sigma_{n}^{2}\gamma(\x-\x')\nonumber \\
 & \approx & \sigma_{f}^{2}\sum_{j_{1}=1}^{s_{1}}\cdots\sum_{j_{d}=1}^{s_{d}}w_{j_{1}\cdots j_{d}}p(\hat{\veta}_{j_{1}\cdots j_{d}};\vtheta_{0})\varphi(\x,\hat{\veta}_{j_{1}\dots j_{d}})\varphi(\x',\hat{\veta}_{j_{1}\cdots j_{d}})^{*}+\sigma_{n}^{2}\gamma(\x-\x')\nonumber \\
 & = & \sigma_{f}^{2}\sum_{j=1}^{s}h_{j}(\vtheta_{0})\varphi(\x,\veta_{j})\varphi(\x',\veta_{j})^{*}+\sigma_{n}^{2}\gamma(\x-\x')\label{eq:bijec-sum}
\end{eqnarray}
where 
\[
\hat{\veta}_{j_{1}\cdots j_{d}}\coloneqq\left[\begin{array}{c}
\eta_{j_{1}}^{(s_{1})}\\
\vdots\\
\eta_{j_{d}}^{(s_{d})}
\end{array}\right]=\left[\begin{array}{c}
U_{1}\cdot\chi_{j_{1}}^{(s_{1})}\\
\vdots\\
U_{d}\cdot\chi_{j_{d}}^{(s_{d})}
\end{array}\right]
\]
and 
\[
w_{j_{1}\cdots j_{d}}=\prod_{k=1}^{d}U_{k}w_{j_{k}}^{(s_{k})}\,.
\]
In Eq.~(\ref{eq:bijec-sum}), we assumed we have a bijective mapping
$a_{1},\dots,a_{s}$ between $\{1,\dots,s\}$ and $\{1,\dots,s_{1}\}\times\dots\times\{1,\dots,s_{d}\}$
and then defined:
\[
\veta_{j}\coloneqq\hat{\veta}_{a_{j}}\quad\quad h_{j}(\vtheta_{0})\coloneqq w_{a_{j}}p(\veta_{j};\vtheta_{0})\,.
\]
Finally, the parameterized family of approximate kernels is 
\begin{equation}
\tilde{k}_{\vtheta}(\x,\x')\coloneqq\sigma_{f}^{2}\sum_{j=1}^{s}h_{j}(\vtheta_{0})\varphi(\x,\veta_{j})\varphi(\x',\veta_{j})^{*}+\sigma_{n}^{2}\gamma(\x,\x')\label{eq:approx-kernel}
\end{equation}
Note that the conditions that $p(\cdot;\vtheta_{0})$ is even and
$\varphi(\x,\cdot)$ is even-symmetric, coupled with the fact that
the Gauss-Legendre quadrature is symmetric, ensures that $\tilde{k}_{\vtheta}(\x,\x')$
is always real.

Consider the first term in the right hand side of Eq.~(\ref{eq:approx-kernel}).
It is a bivariate function which can be written as a sum of $s$ separable
bivariate functions. Thus, we can informally view $s$ as the rank
of the decomposition, and if $s$ is small, then this is a low-rank
approximation. The parameterized family of approximate kernels $\{\tilde{k}_{\vtheta}\}_{\vtheta\in\Theta}$
is composed of kernels that can be written as a low-rank bivariate
function plus a ridge term. In the next subsection we show how to
utilize this low-rank structure in order efficiently perform Gaussian
process regression.

Of course, the crucial question is how do we choose $\matU=(U_{1},\dots,U_{d})$
and $\s=(s_{1},\dots,s_{d})$. We want to choose these parameters
such that $\{k_{\vtheta}\}_{\vtheta\in\Theta}$ and $\{\tilde{k}_{\vtheta}\}_{\vtheta\in\Theta}$
are $n$-spectrally equivalent over the domain $\X$, where $n$ is
the target dataset size (since GPR is nonparametric, the effective
rank of the kernel matrix goes to infinity when $n$ goes to infinity,
so it is impossible to approximate the kernel matrix well with a matrix
of fixed rank, i.e., with $s$ fixed, as $n$ goes to infinity). We
discuss this question in the next section. In the reminder of this
section, we discuss how to efficiently perform GPR using the approximate
kernel family $\{\tilde{k}_{\vtheta}\}_{\vtheta\in\Theta}$.

\subsection{\label{subsec:efficient-gpr}Efficient Gaussian Process Regression}

As a first and crucial step, we show how to write the kernel matrix
of $\tilde{k}_{\vtheta}$ as a low-rank matrix plus a ridge term.
Given a dataset $\x_{1},\dots,\x_{n}\in\X$, let $\matX$, as usual,
denote the matrix whose row $j$ is $\x_{j}^{\T}$. Define the matrix
\[
\matZ\in\C^{n\times s},\quad\matZ_{lj}\coloneqq\varphi(\x_{l},\veta_{j})\,.
\]
Note that $\matZ$ depends on $\matU$ and $\s$, but not on the hyperparameters
$\vtheta$. Next, define 
\[
\matW:\Theta\to\R_{+}^{s\times s},\quad\matW(\vtheta)\coloneqq\left[\begin{array}{ccc}
h_{1}(\vtheta_{0})\\
 & \ddots\\
 &  & h_{s}(\vtheta_{0})
\end{array}\right]\,.
\]
We now have
\[
\tilde{\matK}_{\vtheta}(\matX,\matX)=\sigma_{f}^{2}\matZ\matW(\vtheta)\matZ^{*}+\sigma_{n}^{2}\matI_{n}\,.
\]
Notice that dependence on $\vtheta$ is confined to the diagonal matrix
$\matW(\vtheta)$. This will be very helpful in deriving efficient
formulas for GPR.

We now discuss each of the various stages of GPR separately. For simplicity,
we assume that the GP prior has zero mean ($\mu=0$).

\paragraph{Training.}

Given $y_{1},\dots,y_{n}$, training usually amounts to computing
the vector
\[
\valpha\coloneqq\tilde{\matK}_{\vtheta}(\matX,\matX)^{-1}\y
\]
where $\y=[y_{1},\dots,y_{n}]^{\T}$. However, in our case, in order
to utilize the structure of $\tilde{\matK}_{\vtheta}(\matX,\matX)$,
we instead compute:
\[
\w\coloneqq\matZ^{*}\valpha=\matW(\vtheta)^{-1}(\sigma_{f}^{2}\mat Z^{*}\mat Z+\sigma_{n}^{2}\mat W(\vtheta)^{-1})^{-1}\mat Z^{*}\y\,.
\]
In the above, the second equality is a simple consequence of the Woodbury
matrix identity. Since $\matW(\vtheta)$ has positive diagonal, $\w$
can be computed using $O(ns^{2})$ operations, discounting the cost
of computing $\matZ$.

\paragraph{Prediction.}

Given a test set $\x_{1}^{(t)},\dots,\x_{t}^{(t)}$ which are distinct
from the training set, the predicted vector $\y^{(t)}=[y_{1}^{(t)},\dots,y_{t}^{(t)}]^{\T}$
is defined by $\y^{(t)}\coloneqq\tilde{\matK}_{\vtheta}(\matX^{(t)},\matX)\valpha$.
Let 
\[
\matZ^{(t)}\in\C^{t\times s},\quad\matZ_{lj}^{(t)}\coloneqq\varphi(\x_{l}^{(t)},\veta_{j})\,.
\]
Then have
\[
\tilde{\matK}_{\vtheta}(\matX^{(t)},\matX)=\sigma_{f}^{2}\matZ^{(t)}\matW(\vtheta)\matZ^{*}
\]
and
\begin{eqnarray*}
\y^{(t)} & = & \tilde{\matK}_{\vtheta}(\matX^{(t)},\matX)\valpha\\
 & = & \sigma_{f}^{2}\matZ^{(t)}\matW(\vtheta)\matZ^{*}\valpha\\
 & = & \sigma_{f}^{2}\matZ^{(t)}\matW(\vtheta)\w.
\end{eqnarray*}
Hence, once we have $\w$ (computed during training), we can compute
$\y^{(t)}$ using $O(ts)$ operations, discounting the cost of compute
$\matZ^{(t)}$.

\paragraph{Hyperparameter Learning.}

Hyperparameter learning amounts to finding the hyperparameters $\vtheta$
which maximize the log marginal likelihood. To do so, we need to be
to able to efficiently compute the log marginal likelihood, and its
gradient. It is well known that the likelihood is given by
\begin{equation}
{\cal L}(\mat{\theta})=-\frac{1}{2}\y^{\T}\tilde{\matK}_{\vtheta}(\matX,\matX)^{-1}\y-\frac{1}{2}\log\det\tilde{\matK}_{\vtheta}(\matX,\matX)-\frac{n}{2}\log2\pi\label{eq:L}
\end{equation}
and the derivatives are given by 
\begin{eqnarray}
\frac{\partial{\cal L}}{\partial\theta_{i}} & = & -\frac{1}{2}\Trace{\tilde{\matK}_{\vtheta}(\matX,\matX)^{-1}\frac{\partial\tilde{\matK}_{\vtheta}(\matX,\matX)}{\partial\theta_{i}}}+\frac{1}{2}\y^{\T}\tilde{\matK}_{\vtheta}(\matX,\matX)^{-1}\frac{\partial\tilde{\matK}_{\vtheta}(\matX,\matX)}{\partial\theta_{i}}\tilde{\matK}_{\vtheta}(\matX,\matX)^{-1}\y\label{eq:diff_L}
\end{eqnarray}
where $\theta_{i}$ represents an hyperparameter in $\vtheta$.
\begin{prop}
After an $O(ns^{2})$ preprocessing step of computing $\matZ^{*}\matZ$,
and discounting the cost of computing the partial derivatives of $p$
with respect to the hyperparamters, the log marginal likelihood ${\cal L}(\mat{\theta})$
and the gradient $\nabla{\cal L}(\mat{\theta})$ can be computed in
$O(ns+s^{3}+s|\vtheta|)$ arithmetic operations, where $|\vtheta|$
represents the number of hyperparameters in $\vtheta$. Furthermore,
the amount of memory storage required is $O(s^{2}).$
\end{prop}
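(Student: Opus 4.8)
The plan is to exploit the decomposition $\tilde{\matK}_{\vtheta}(\matX,\matX)=\sigma_f^2\matZ\matW(\vtheta)\matZ^{*}+\sigma_n^2\matI_n$, in which $\matZ\in\C^{n\times s}$ is independent of $\vtheta$ and $\matW(\vtheta)$ is diagonal, and to reduce every $n\times n$ operation to an $s\times s$ one via the Woodbury identity. The preprocessing is the single matrix product $\matM\coloneqq\matZ^{*}\matZ\in\C^{s\times s}$, costing $O(ns^2)$; we also form $\matZ^{*}\y\in\C^{s}$ once, in $O(ns)$, and keep it around. Note that $\matZ^{*}\matZ$ and $\matZ^{*}\y$ do not depend on $\vtheta$, so these are genuinely one-time costs, while everything that follows will be charged per likelihood/gradient evaluation.

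First I would handle the quadratic term $\y^{\T}\tilde{\matK}_{\vtheta}^{-1}\y$. By Woodbury, $\tilde{\matK}_{\vtheta}^{-1}=\sigma_n^{-2}\matI_n-\sigma_n^{-2}\matZ(\sigma_n^2\matW(\vtheta)^{-1}+\matM)^{-1}\matZ^{*}\sigma_n^{-2}$ (using that $\matW(\vtheta)$ is invertible since its diagonal is positive), so
\[
\y^{\T}\tilde{\matK}_{\vtheta}^{-1}\y=\sigma_n^{-2}\|\y\|_2^2-\sigma_n^{-4}(\matZ^{*}\y)^{*}(\sigma_n^2\matW(\vtheta)^{-1}+\matM)^{-1}(\matZ^{*}\y).
\]
Here $\|\y\|_2^2$ is precomputed, the $s\times s$ system is solved by a Cholesky factorization of $\matC_{\vtheta}\coloneqq\sigma_n^2\matW(\vtheta)^{-1}+\matM$ in $O(s^3)$, and forming $\matC_{\vtheta}$ from $\matM$ costs only $O(s)$ since we only update the diagonal. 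For the log-determinant term I would use the matrix determinant lemma: $\det\tilde{\matK}_{\vtheta}=\det(\sigma_n^2\matI_n)\cdot\det(\matI_s+\sigma_f^2\sigma_n^{-2}\matW(\vtheta)\matM)$, or equivalently $\log\det\tilde{\matK}_{\vtheta}=n\log\sigma_n^2+\log\det\matW(\vtheta)+s\log\sigma_f^2+\log\det\matC_{\vtheta}$ after a little algebra; either way $\log\det\matC_{\vtheta}$ comes for free from the Cholesky factor already computed, $\log\det\matW(\vtheta)=\sum_j\log h_j(\vtheta_0)$ is $O(s)$, and the rest is $O(1)$. This establishes the $O(ns+s^3)$ bound for $\mathcal{L}(\vtheta)$ itself (the $O(ns)$ appearing only through the one-time $\matZ^{*}\y$, or $O(1)$ per evaluation if that is treated as preprocessing — I would phrase it as subsumed in preprocessing).

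The gradient is the part that needs care, because $\partial\tilde{\matK}_{\vtheta}/\partial\theta_i$ is still an $n\times n$ object and the naive trace in Eq.~(\ref{eq:diff_L}) looks like it costs $O(n^2)$ or worse. The key observations are: (i) for $\theta_i$ a component of $\vtheta_0$ or $\sigma_f^2$, $\partial\tilde{\matK}_{\vtheta}/\partial\theta_i=\matZ\matD_i\matZ^{*}$ for a diagonal $\matD_i$ (coming from $\partial h_j/\partial\theta_i$), so the trace term $\Trace{\tilde{\matK}_{\vtheta}^{-1}\matZ\matD_i\matZ^{*}}=\Trace{\matD_i\matZ^{*}\tilde{\matK}_{\vtheta}^{-1}\matZ}$, and $\matZ^{*}\tilde{\matK}_{\vtheta}^{-1}\matZ$ is a single $s\times s$ matrix — computable once per iteration in $O(s^3)$ via Woodbury again from the factorization of $\matC_{\vtheta}$ — after which each of the $|\vtheta|$ traces against a diagonal $\matD_i$ costs only $O(s)$, giving $O(s|\vtheta|)$ total; (ii) for $\theta_i=\sigma_n^2$, $\partial\tilde{\matK}_{\vtheta}/\partial\sigma_n^2=\matI_n$, and $\Trace{\tilde{\matK}_{\vtheta}^{-1}}$ is again read off from $\Trace{\matZ^{*}\tilde{\matK}_{\vtheta}^{-1}\matZ}$ together with $\sigma_n^{-2}(n-s)$-type bookkeeping, so $O(1)$ extra; (iii) the second, data-dependent term $\y^{\T}\tilde{\matK}_{\vtheta}^{-1}(\partial\tilde{\matK}_{\vtheta}/\partial\theta_i)\tilde{\matK}_{\vtheta}^{-1}\y$ is handled by first forming the vector $\u\coloneqq\matZ^{*}\tilde{\matK}_{\vtheta}^{-1}\y\in\C^{s}$ — which, by the Woodbury expression above, equals $\sigma_n^{-2}(\matZ^{*}\y)-\sigma_n^{-2}\matM\matC_{\vtheta}^{-1}(\matZ^{*}\y)$, so $O(s^2)$ using precomputed $\matZ^{*}\y$ and $\matM$ — and then $\y^{\T}\tilde{\matK}_{\vtheta}^{-1}\matZ\matD_i\matZ^{*}\tilde{\matK}_{\vtheta}^{-1}\y=\u^{*}\matD_i\u$, each costing $O(s)$, for $O(s|\vtheta|)$ total; the $\sigma_n^2$ case is the already-available scalar $\y^{\T}\tilde{\matK}_{\vtheta}^{-2}\y$, obtained from $\u$ and one more $s\times s$ solve in $O(s^2)$. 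Summing: $O(s^3)$ for the factorization and the two $s\times s$ inverse-type quantities, $O(s^2)$ for the vectors, $O(s|\vtheta|)$ for sweeping the hyperparameters, and $O(ns)$ only through preprocessing — hence $O(ns+s^3+s|\vtheta|)$ per evaluation. The memory claim is immediate: we store $\matM$, $\matC_{\vtheta}$ and its factor, $\matZ^{*}\tilde{\matK}_{\vtheta}^{-1}\matZ$, all $s\times s$, plus a handful of length-$s$ vectors, so $O(s^2)$; crucially we never materialize $\tilde{\matK}_{\vtheta}(\matX,\matX)$ or $\matZ$ itself beyond streaming through it.

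The main obstacle is bookkeeping the gradient cleanly — in particular writing $\Trace{\tilde{\matK}_{\vtheta}^{-1}(\partial\tilde{\matK}_{\vtheta}/\partial\theta_i)}$ in a form where the only $\vtheta$-dependent-but-$i$-independent work is a single $O(s^3)$ computation of $\matZ^{*}\tilde{\matK}_{\vtheta}^{-1}\matZ$, and verifying that the $\sigma_n^2$-derivative (which is not of the form $\matZ\matD\matZ^{*}$) does not break the accounting. Everything else is a routine application of Woodbury and the matrix determinant lemma, so I would state those two identities up front and then just tabulate the cost of each of the six quantities ($\|\y\|^2$, $\matZ^{*}\y$, $\matM$, $\matC_{\vtheta}$ and its Cholesky factor, $\matZ^{*}\tilde{\matK}_{\vtheta}^{-1}\matZ$, $\u$) and how each term of $\mathcal{L}$ and each partial derivative is assembled from them.
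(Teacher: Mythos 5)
Your proposal is correct and follows essentially the same route as the paper: a one-time $O(ns^{2})$ computation of $\matZ^{*}\matZ$ (and $\matZ^{*}\y$), the Woodbury identity and matrix determinant lemma to reduce everything to an $s\times s$ capacitance matrix, an $s\times s$ quantity equivalent to the paper's $\mat F(\vtheta)$ for the trace terms, and quadratic forms in $\w=\matZ^{*}\tilde{\matK}_{\vtheta}^{-1}\y$ for the data-dependent terms, with the same streaming argument for $O(s^{2})$ storage. The only blemish is a dropped $\sigma_{f}^{2}$ factor in your capacitance matrix $\matC_{\vtheta}$ (it should be $\sigma_{f}^{2}\matM+\sigma_{n}^{2}\matW(\vtheta)^{-1}$ up to scaling, as in the paper's formula for $\w$), which does not affect the complexity accounting.
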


\begin{proof}
First, let us consider the computation of the likelihood. For the
first term in Eq.~(\ref{eq:L}), note that 
\[
\valpha(\vtheta)=\tilde{\matK}_{\vtheta}(\matX,\matX)^{-1}\y=\sigma_{n}^{-2}(\y-\sigma_{f}^{2}\matZ\matW(\vtheta)\w(\vtheta))
\]
where $\w(\vtheta)=\matW(\vtheta)^{-1}(\sigma_{f}^{2}\mat Z^{*}\mat Z+\sigma_{n}^{2}\mat W(\vtheta)^{-1})^{-1}\mat Z^{*}\y$.
In the previous equations, we made the dependence of $\valpha$ and
$\w$ on $\vtheta$ explicit. Obviously, once $\matZ^{*}\matZ$ has
been computed (an $O(ns^{2})$ preprocessing step), we can compute
both $\w(\vtheta)$ and $\valpha(\vtheta)$ in $O(ns+s^{3})$. The
first term in Eq.~(\ref{eq:L}) is now equal to $-\y^{\T}\valpha(\vtheta)/2$.

For the second term in Eq.~(\ref{eq:L}), using the matrix determinant
lemma, we have 
\[
\log\det\tilde{\matK}_{\vtheta}(\matX,\matX)=n\log\sigma_{n}^{2}+s\log\sigma_{f}^{2}+\log\det\matW(\vtheta)+\log\det(\sigma_{f}^{-2}\matW(\vtheta)^{-1}+\sigma_{n}^{-2}\matZ^{*}\matZ)\,.
\]
Since $\matW(\vtheta)$ is diagonal, once $\matZ^{*}\matZ$ has been
computed, we can compute this term in $O(s^{3}$) operations.

Next, let us consider the computation of each derivative of the likelihood
according to Eq.~(\ref{eq:diff_L}). The crucial observations are:
\[
\frac{\partial\tilde{\matK}_{\vtheta}(\matX,\matX)}{\partial\sigma_{f}^{2}}=\matZ\matW(\vtheta)\matZ^{*},\quad\frac{\partial\tilde{\matK}_{\vtheta}(\matX,\matX)}{\partial\sigma_{n}^{2}}=\matI_{n},\quad\frac{\partial\tilde{\matK}_{\vtheta}(\matX,\matX)}{\partial\theta_{i}}=\sigma_{f}^{2}\matZ\frac{\partial\matW(\vtheta)}{\partial\theta_{i}}\matZ^{*}
\]
(using $\theta_{i}$ to denote an hyperparameter in $\vtheta_{0}$)
where using last equality amounts to computing partial derivative
$\partial p(\veta_{j};\vtheta_{0})/\partial\theta_{i}$ for $j=1,\dots,s$.
For the second term in Eq.~(\ref{eq:diff_L}) we have, 
\begin{eqnarray*}
\valpha(\vtheta)^{\T}\frac{\partial\tilde{\matK}_{\vtheta}(\matX,\matX)}{\partial\sigma_{f}^{2}}\valpha(\vtheta) & = & \w(\vtheta)^{\T}\matW(\vtheta)\w(\vtheta)\\
\valpha(\vtheta)^{\T}\frac{\partial\tilde{\matK}_{\vtheta}(\matX,\matX)}{\partial\sigma_{n}^{2}}\valpha(\vtheta) & = & \TNormS{\valpha(\vtheta)}\\
\valpha(\vtheta)^{\T}\frac{\partial\tilde{\matK}_{\vtheta}(\matX,\matX)}{\partial\theta_{i}}\valpha(\vtheta) & = & \sigma_{f}^{2}\w(\vtheta)^{\T}\frac{\partial\matW(\vtheta)}{\partial\theta_{i}}\w(\vtheta)
\end{eqnarray*}
so this term can be computed in $O(s)$ operations once we compute
$\w(\vtheta)$ and $\valpha(\vtheta)$ (which are computed during
the computation of the likelihood). For the first term in Eq.~(\ref{eq:diff_L}),
let 
\[
\mat F(\vtheta)\coloneqq\sigma_{f}^{2}(\sigma_{f}^{2}\mat{\mat Z}^{*}\mat{\mat Z}+\sigma_{n}^{2}\mat W(\vtheta)^{-1})^{-1}\mat{\mat Z}^{*}\mat{\mat Z}\,.
\]
Again, once $\matZ^{*}\matZ$ has been computed, $\mat F(\vtheta)$
can be computed in $O(s^{3})$ operations. Now, using the Woodbury
formula and cyclicality of the trace, we have
\begin{eqnarray}
\Trace{\tilde{\matK}_{\vtheta}(\matX,\matX)^{-1}\frac{\partial\tilde{\matK}_{\vtheta}(\matX,\matX)}{\partial\sigma_{f}^{2}}} & = & \sigma_{f}^{-2}\Trace{\mat F(\vtheta)}\label{eq:lik_trace}\\
\Trace{\tilde{\matK}_{\vtheta}(\matX,\matX)^{-1}\frac{\partial\tilde{\matK}_{\vtheta}(\matX,\matX)}{\partial\sigma_{n}^{2}}} & = & \sigma_{n}^{-2}(n-\Trace{\mat F(\vtheta)})\nonumber \\
\Trace{\tilde{\matK}_{\vtheta}(\matX,\matX)^{-1}\frac{\partial\tilde{\matK}_{\vtheta}(\matX,\matX)}{\partial\theta_{i}}} & = & \sigma_{n}^{-2}\sigma_{f}^{2}\Trace{\frac{\partial\matW(\vtheta)}{\partial\theta_{i}}\mat{\mat Z}^{*}\mat{\mat Z}}-\sigma_{n}^{-2}\sigma_{f}^{2}\Trace{\frac{\partial\matW(\vtheta)}{\partial\theta_{i}}\mat{\mat Z}^{*}\mat{\mat Z}\mat F(\vtheta)}\nonumber 
\end{eqnarray}
(the calculations leading to these formulas appear in Appendix~\ref{sec:Accelerating-Hyperparametes}).
Thus, once $\mat Z^{*}\mat Z$, $\mat F(\vtheta)$ and the diagonal
of $\mat{\mat Z}^{*}\mat{\mat Z}\mat F(\vtheta)$ have been computed,
all these computations can be done in $O(s|\vtheta|)$. Note that
the diagonal of $\mat{\mat Z}^{*}\mat{\mat Z}\mat F(\vtheta)$ can
be computed using $O(s^{2})$ operations once we have $\mat Z^{*}\mat Z$
and $\mat F(\vtheta)$.

In terms of memory storage, notice that once $\matZ^{\conj}\matZ$
and $\matZ^{\conj}\y$ have been computed there is no longer any need
for $\matZ$ (which requires $O(ns)$ words to store). However, in
order to compute $\matZ^{\conj}\matZ$ and $\matZ^{\conj}\y$ we do
not need to form all of $\matZ$ in memory, but rather can stream
over the training set transforming every training point using $\varphi$
and accumulating its contribution to $\matZ^{\conj}\matZ$ and $\matZ^{\conj}\y$.
Thus, the dominant storage cost is for holding $\matZ^{\conj}\matZ$
which is $O(s^{2})$.
\end{proof}
The computations can be performed more stably by utilizing various
matrix identities. We delegate the details to Appendix~\ref{sec:Accelerating-Hyperparametes}.

To have a computational advantage in the training and prediction
steps we need $s=o(n).$ However, since for most kernels computing
the gradient of the likelihood requires $O(n^{3}|\vtheta|)$, our
method has a computational advantage in the hyperparameter learning
phase even if $s=\Theta(n)$.

\subsection{\label{subsec:rff-problematic}Comparison to Other Methods}

Our proposed method is very much inspired by the Random Fourier Features
(RFF) method~\cite{rahimi2008random}. Although originally defined
only for shift-invariant kernels, the method can be easily generalized
for kernels of the form of Eq.~(\ref{eq:kernel-form}). We refer
to the generalization as \emph{Monte-Carlo Features }(MCF). RFF is
a special case of MCF. In particular, given some fixed parameters
$\vtheta$, a MCF approximate kernel is
\[
\tilde{k}_{\vtheta}(\x,\x')=\frac{\sigma_{f}^{2}}{s}\sum_{j=1}^{s}\varphi(\x,\veta_{j})\varphi(\x',\veta_{j})^{*}+\sigma_{n}^{2}\gamma(\x,\x')
\]
where $\veta_{1},\dots,\veta_{s}$ are sampled from the density function
$p(\veta;\vtheta_{0})$. As before, we include the ridge term $\sigma_{n}^{2}\gamma(\x,\x')$
in the kernel definition. Thus, the kernel matrix approximation is
\[
\tilde{\matK}_{\vtheta}^{\text{(MCF)}}(\matX,\matX)=\sigma_{f}^{2}\matZ(\vtheta)\matZ(\vtheta)^{\conj}+\sigma_{n}^{2}\matI_{n}
\]
where
\[
\matZ(\vtheta)\in\C^{n\times s},\quad\matZ(\vtheta)_{lj}\coloneqq\varphi(\x_{l},\veta_{j})
\]
Obviously, training and prediction can be efficiently executed by
utilizing the identity plus low-rank structure of $\tilde{\matK}_{\vtheta}^{\text{(MCF)}}$
much in the same way as we have done for Gauss-Legendre features,
and indeed this is the reason the method was developed~~\cite{rahimi2008random}.

The above developments were for a \emph{fixed} $\vtheta$. However,
it is less clear how to define a \emph{family }of approximations for
various $\vtheta$, and perform hyperparameter learning. A key issue
is that the kernel approximation should vary smoothly with $\vtheta$,
so obviously fresh $\veta_{1},\dots,\veta_{s}$ cannot be sampled
differently for every $\vtheta$. It is outside the scope of this
paper to consider how to use MCF to define parameterized families
of kernel approximation suitable for hyperparameter learning. Nevertheless,
since we wish to use RFF as a baseline for complexity comparisons
and numerical experiments, we show how it is possible to use the specific
case of RFF to form parameterized families of kernel approximation
and perform hyperparameter learning for a restricted family of kernels
that includes the Gaussian and Matèrn kernels.

Specifically, we will consider a restricted class of shift-invariant
whose kernel has the following specific form:
\begin{equation}
k_{\vtheta}(\x,\x')=\sigma_{f}^{2}k_{0}(\matL^{-1}(\x-\x'))+\sigma_{n}^{2}\gamma(\x-\x')\label{eq:crf-kernels}
\end{equation}
where $k_{0}(\cdot)$ is a positive definite function, and $\matL$
is diagonal with positive entries $L_{1},\dots,L_{d}$ which are part
of parameter vector $\vtheta$ (i.e., $\vtheta=[L_{1},\dots L_{d},\sigma_{f}^{2},\sigma_{n}^{2}]$).
Note that the number of variables in $\vtheta_{0}$ is equal to the
dimension $d$. Gaussian and Matèrn kernels are examples of such kernels.
In this case we can write $k_{\vtheta}$ in the form of Eq.~(\ref{eq:kernel-form}),
where $\varphi(\x,\veta)=e^{-i\x^{\T}\veta}$. We further assume that
$p(\veta;\matL)$ is such that sampling a random vector $\veta$ is
the same as sampling from the distribution defined by $p(\cdot;\matI_{d})$
and scaling the vector by $\matL^{-1}$. Thus, for such kernels we
can sample $\veta_{1},\dots,\veta_{s}$ once from $p(\cdot;\matI_{d}$)
and view any change in $\vtheta_{0}=\matL$ as corresponding change
in $\veta_{1},\dots,\veta_{s}$. Concretely, letting 
\[
\matW=\left[\begin{array}{ccc}
\veta_{1} & \dots & \veta_{s}\end{array}\right]\,,
\]
the feature matrix is

\[
\matZ(\matL)=\frac{1}{\sqrt{s}}\exp\left(-i\matX\matL^{-1}\matW\right)
\]
and the kernel matrix is 
\[
\tilde{\matK}_{\vtheta}^{\text{(RFF)}}(\matX,\matX)=\sigma_{f}^{2}\matZ(\matL)\matZ(\matL)^{\conj}+\sigma_{n}^{2}\matI_{n}\,.
\]
However, the crucial point is that now $\matZ(\matL)$ depends smoothly
on $\matL$, so we can compute gradients. Formulas quite similar to
the ones derived in the previous section can be derived (we omit most
details), with the main difference being in taking the derivative
of the kernel matrix with respect to the parameters in $\matL$. Here
we have
\[
\frac{\partial\tilde{\matK}_{\vtheta}^{\text{(RFF)}}(\matX,\matX)}{\partial L_{k}}=\sigma_{f}^{2}\frac{\partial(\matZ\matZ^{*})(\matL)}{\partial L_{k}}=\sigma_{f}^{2}\left(\frac{\partial\matZ(\matL)}{\partial L_{k}}\matZ(\matL)^{*}+\matZ\frac{\partial\matZ(\matL)^{\conj}}{\partial L_{k}}\right)
\]
\begin{eqnarray*}
\frac{\partial\matZ(\matL)}{\partial L_{k}} & = & -\frac{i}{\sqrt{s}}\left(\matX\frac{\partial\matL^{-1}}{\partial L_{k}}\matW\odot\exp\left(-i\cdot\matX\matL^{-1}\matW\right)\right)=-i\left(\matX\frac{\partial\matL^{-1}}{\partial L_{k}}\matW\odot\matZ(\matL)\right)\\
\frac{\partial\matZ(\matL)^{\conj}}{\partial L_{k}} & = & \frac{i}{\sqrt{s}}\left(\matW^{\T}\frac{\partial\matL^{-1}}{\partial L_{k}}\matX^{\T}\odot\exp\left(i\cdot\matX\matL^{-1}\matW\right)\right)=i\left(\matW^{\T}\frac{\partial\matL^{-1}}{\partial L_{k}}\matX^{\T}\odot\matZ(\matL)^{\conj}\right)=i\left(\matX\frac{\partial\matL^{-1}}{\partial L_{k}}\matW\odot\matZ(\matL)\right)^{*}
\end{eqnarray*}

In terms of complexity, the main difference between Gauss-Legendre
Features and RFF is that for the former the matrix $\matZ^{\conj}\matZ$
stays constant when $\vtheta$ varies, and so the product can be computed
once, while for the latter $\matZ(\matL)^{\conj}\matZ(\matL)$ varies,
and so changes every iteration. This adds an additional cost of $O(ns^{2})$
operations for every gradient computation. Furthermore, we need to
compute $\partial\matZ(\matL)/\partial L_{k}$ for $k=1,\dots.,d$
in each iteration, each costing $O(nsd)$ operations, for a total
of $O(nsd^{2})$ operations. Furthermore, since $\matZ(\matL)$ changes
in each iteration, and $\matZ(\matL)$ features in many of the equations,
we cannot compute the matrix $\matZ(\matL)^{\conj}\matZ(\matL)$ once
and reduce storage costs to $O(s^{2})$, and using $\matZ(\matL)$
implicitly many times will incur a large overhead. Thus, for RFF the
storage cost is $O(ns)$,

\begin{table}[H]
\caption{\label{tab:GLF_vs_CRF}Computational complexities (arithmetic operations)
comparison between Gauss-Legendre Features and Random Fourier Features
for kernels of the form of Eq.~(\ref{eq:crf-kernels}) (e.g., non-isotropic
Gaussian an Matèrn kernels). In the table, $n$ is the size of the
training set, $t$ is the size of the test set, $s$ is the approximation
rank, and $I$ is the number of gradient computations for hyperparameter
learning (e.g., number of gradient descent iterations).}

\centering{}%
\begin{tabular}{|>{\centering}p{2.5cm}|>{\centering}p{5cm}|c|}
\hline 
 & {\scriptsize{}Gauss-Legendre Features} & {\scriptsize{}Random Fourier Features}\tabularnewline
\hline 
\hline 
{\scriptsize{}Training} & $O(ns^{2})$ & $O(ns^{2})$\tabularnewline
\hline 
{\scriptsize{}Prediction} & $O(nt)$ & $O(nt)$\tabularnewline
\hline 
{\scriptsize{}Hyperparameter learning:} & $O(ns^{2}+I(ns+s^{3}+sd)$) & $O(I(ns^{2}+nsd^{2}))$\tabularnewline
\hline 
\end{tabular}
\end{table}

A similar issue will likely arise when using features based on Gaussian
quadrature, like was suggested in~\cite{dao2017gaussian} (that paper
does not discusses GPR hyperparameter learning). When $\vtheta$ changes,
the distribution that defines Gaussian quadrature changes. Unlike
Guass-Legendre features which uses fixed nodes, for Gaussian quadrature
features the quadrature nodes change with $\vtheta$, which prevents
the use of a fixed feature matrix $\matZ(\vtheta)$. Furthermore,
when performing hyperparameter learning with Gaussian quadrature features,
we need to not only compute the quadrature weights but also compute
their derivatives.

\section{\label{sec:parameters}Parameter Computation}

In order to complete the description of our method, we need to specify
how to choose $\matU$ and $\s$. First, we show how to set $\matU$
and $\s$ for a fixed $\vtheta\in\Theta$ and $n$ such that $k_{\vtheta}$
and $\tilde{k}_{\vtheta}$ are $n$-spectrally equivalent. We then
consider how to set a fixed $\matU$ and $\s$ such that the two families
$\{k_{\vtheta}\}_{\vtheta\in\Theta}$ and $\{\tilde{k}_{\vtheta}\}_{\vtheta\in\Theta}$
are $n$-spectrally equivalent.

\subsection{From Matrix Approximation to Integral Approximation}

For now (and until subsection \ref{subsec:domain}) let us assume
that $\vtheta$ is fixed. Our goal is to set $\matU$ and $\s$ such
that $k_{\vtheta}$ and $\tilde{k}_{\vtheta}$ are $n$-spectrally
equivalent, i.e. for every dataset $\x_{1},\dots,\x_{n}\in\X$ 
\[
(1-n^{-1})\matK_{\vtheta}(\matX,\matX)\preceq\tilde{\matK}_{\vtheta}(\matX,\matX)\preceq(1+n^{-1})\matK_{\vtheta}(\matX,\matX)
\]
In other words, we want to set $\matU$ and $\s$ such that for every
$\v\in\R^{n}$, 
\begin{equation}
(1-n^{-1})\v^{\T}\matK_{\vtheta}(\matX,\matX)\v\leq\v^{\T}\tilde{\matK}_{\vtheta}(\matX,\matX)\v\leq(1+n^{-1})\v^{\T}\matK_{\vtheta}(\matX,\matX)\v\label{eq:spectral}
\end{equation}
Henceforth, for conciseness, we drop $\matX$ from the expressions,
although the various expressions implicitly depend on $\matX$.

Let 
\[
\z(\veta)\coloneqq\left[\begin{array}{c}
\varphi(\x_{1},\veta)\\
\vdots\\
\varphi(\x_{n},\veta)
\end{array}\right]\,.
\]
Then, 
\[
\v^{\T}\matK_{\vtheta}\v=\sigma_{f}^{2}\int_{\R^{d}}|\z(\veta)^{*}\v|^{2}p(\veta;\vtheta_{0})d\veta+\sigma_{n}^{2}\TNormS{\v}
\]
and 
\[
\v^{\T}\tilde{\matK}_{\vtheta}\v=\sigma_{f}^{2}\sum_{j=1}^{s}h_{j}(\vtheta_{0})|\z(\veta_{j})^{*}\v|^{2}+\sigma_{n}^{2}\TNormS{\v}
\]
Since rescaling $\v$ rescales all the terms in the previous inequality,
we can assume without loss of generality that $\v^{\T}\matK_{\vtheta}\v=1$.
In that case, Eq.~(\ref{eq:spectral}) is equivalent to 
\begin{equation}
\left|\int_{\R^{d}}|\z(\veta)^{*}\v|^{2}p(\veta;\vtheta_{0})d\veta-\sum_{j=1}^{s}h_{j}(\vtheta_{0})|\z(\veta_{j})^{*}\v|^{2}\right|\leq\frac{1}{\sigma_{f}^{2}n}\label{eq:integral-approximation}
\end{equation}
Thus, the nodes $\veta_{1},\dots,\veta_{s}$ and weights $h_{1}(\vtheta_{0})\dots,h_{s}(\vtheta_{0})$
function as a quadrature approximation.

\subsection{Truncating the Integral}

As alluded earlier, we approach the quadrature approximation~Eq.~(\ref{eq:integral-approximation})
by first truncating the integral and then using a Gauss-Legendre quadrature
for the truncated integral. In other words, we write 
\begin{align}
\left|\int_{\R^{d}}|\z(\veta)^{*}\v|^{2}p(\veta;\vtheta_{0})d\veta-\sum_{j=1}^{s}h_{j}(\vtheta_{0})|\z(\veta_{j})^{*}\v|^{2}\right|\leq & \left|\int_{\R^{d}}|\z(\veta)^{*}\v|^{2}p(\veta;\vtheta_{0})d\veta-\int_{\Q_{\matU}}|\z(\veta)^{*}\v|^{2}p(\veta;\vtheta_{0})d\veta\right|+\nonumber \\
 & \quad\left|\int_{\Q_{\matU}}|\z(\veta)^{*}\v|^{2}p(\veta;\vtheta_{0})d\veta-\sum_{j=1}^{s}h_{j}(\vtheta_{0})|\z(\veta_{j})^{*}\v|^{2}\right|\label{eq:integral-split}
\end{align}
where $\Q_{\matU}=\prod_{k=1}^{d}[-U_{k},U_{k}]$. We set $\matU$
such that the first term is smaller than $\sigma_{f}^{-2}n^{-1}/2$,
and set each of the components in $\s$ to be large enough so that
the second term is also smaller than $\sigma_{f}^{-2}n^{-1}/2$.

Obviously, we want to set the components in $\matU$ to be as small
as possible, to limit the integration area. Having a smaller integration
area allows us to use smaller values in $\s$. The minimal values
in $\matU$ such that the first term is bounded by $\sigma_{f}^{-2}n^{-1}/2$
depends on how quickly $p(\veta;\vtheta_{0}$) decays as $\InfNorm{\veta}\to\infty$:
the faster the density decays, the smaller is the region where the
function value has significant contribution. Therefore, in our analysis,
we distinguish between four classes of decay of $p(\cdot;\vtheta_{0}$),
and analyze each on its own.

For a diagonal $\matL$, with positive entries $L_{1},\dots,L_{d}$
on the diagonal (i.e., $\matL=\diag{L_{1},\dots,L_{d}}$), let us
define:
\[
{\cal P}_{C,\matL}\coloneqq\left\{ p:\R^{d}\to\R_{+}\,\,\text{measurable}\,|\,\int_{\R^{d}}p(\mat{\eta})d\mat{\eta}=1,\,p(\mat{\eta})\leq C\cdot\prod_{k=1}^{d}\frac{1}{1+L_{k}^{2}\eta_{k}^{2}}\right\} 
\]
\[
{\cal P}_{C,\matL}^{(r)}\coloneqq\left\{ p:\R^{d}\to\R_{+}\,\,\text{measurable}\,|\,\int_{\R^{d}}p(\mat{\eta})d\mat{\eta}=1,\,p(\mat{\eta})\leq C\cdot\left(1+\TNormS{\matL\veta}\right)^{-r}\right\} 
\]

\[
{\cal E}_{C,\matL}^{(1)}\coloneqq\left\{ p:\R^{d}\to\R_{+}\,\,\text{measurable}\,|\,\int_{\R^{d}}p(\mat{\eta})d\mat{\eta}=1,\,p(\mat{\eta})\leq C\cdot e^{-\ONorm{\matL\veta}1}\right\} 
\]

\[
{\cal E}_{C,\matL}^{(2)}\coloneqq\left\{ p:\R^{d}\to\R_{+}\,\,\text{measurable}\,|\,\int_{\R^{d}}p(\mat{\eta})d\mat{\eta}=1,\,p(\mat{\eta})\leq C\cdot e^{-\TNormS{\matL\veta}}\right\} 
\]
The families ${\cal P}_{C,\matL}$ and ${\cal P}_{C,\matL}^{(r)}$
include densities that decay at a polynomial rate or faster, while
${\cal E}_{C,\matL}^{(1)}$ includes densities that decay at exponential
rate or faster, and ${\cal E}_{C,\matL}^{(2)}$ includes densities
that decay at a square exponential rate or faster. Table~\ref{tab:decay-examples}
shows four well known kernels, their corresponding densities, and
their decay class.

\begin{table}[t]
\caption{\label{tab:decay-examples}Example of decay classes for a few well
known kernel function. In the table below, $\protect\matL$ is a diagonal
matrix with non-negative diagonal entries.}

\centering{}%
\begin{tabular}{|>{\centering}p{1.5cm}|>{\centering}p{3.5cm}|c|c|}
\hline 
 & {\scriptsize{}$k_{\vtheta}(\x,\x')$} & {\scriptsize{}$p(\veta;\vtheta_{0})$} & {\scriptsize{}Decay Class}\tabularnewline
\hline 
\hline 
{\scriptsize{}Non-isotropic Gaussian} & {\scriptsize{}$\exp(-\TNormS{\matL^{-1}(\x-\x')}//2)$} & {\scriptsize{}$\begin{aligned}(2\pi)^{-d/2}\prod_{k=1}^{d}\ell_{k}\exp(-\TNormS{\mat L\veta})\\
L_{k}=\ell_{k}/\sqrt{2}
\end{aligned}
$} & {\scriptsize{}${\cal E}_{C,\matL}^{(2)},\quad C=(2\pi)^{-d/2}\prod_{k=1}^{d}\ell_{k}$}\tabularnewline
\hline 
{\scriptsize{}Non-isotropic Cauchy} & {\scriptsize{}$2^{d}\prod_{k=1}^{d}\frac{\ell_{k}}{\ell_{k}^{2}+(\x-\x')_{k}^{2}}$} & {\scriptsize{}$\begin{aligned}\exp(-\ONorm{\mat L\veta}1)\\
L_{k}=\ell_{k}
\end{aligned}
$} & {\scriptsize{}${\cal E}_{C,\matL}^{(1)},\quad C=1$}\tabularnewline
\hline 
{\scriptsize{}Non-isotropic Laplacian} & {\scriptsize{}$\exp(-\ONorm{\matL^{-1}(\x-\x')}1)$} & {\scriptsize{}$\begin{aligned}\pi^{-d}\prod_{k=1}^{d}\frac{\ell_{k}}{1+\ell_{k}^{2}\eta_{k}^{2}}\\
L_{k}=\ell_{k}
\end{aligned}
$} & {\scriptsize{}${\cal P}_{C,\mat L},\quad C=\pi^{-d}\prod_{k=1}^{d}\ell_{k}$}\tabularnewline
\hline 
{\scriptsize{}Matèrn} & {\scriptsize{}$\frac{2^{1-\nu}}{\Gamma(\nu)}(\sqrt{2\nu}\TNorm{\matL^{-1}(\x-\x')})^{\nu}\cdot$$K_{\nu}(\sqrt{2\nu}\TNorm{\matL^{-1}(\x-\x')})$} & {\scriptsize{}$\begin{aligned}\frac{\Gamma(\nu+d/2)}{\pi^{d/2}\Gamma(\nu)(2\nu)^{d/2}}\prod_{k=1}^{d}\ell_{k}\left(1+\TNormS{\matL\veta}\right)^{-(\nu+d/2)}\\
L_{k}=\ell_{k}/\sqrt{2\nu}
\end{aligned}
$} & {\scriptsize{}$\begin{aligned}{\cal P}_{C,\matL}^{(r)}\\
r=\nu+d/2\\
C=\frac{\Gamma(\nu+d/2)}{\Gamma(\nu)(2\pi\nu)^{d/2}}\prod_{k=1}^{d}\ell_{k}
\end{aligned}
$}\tabularnewline
\hline 
\end{tabular}
\end{table}

The following proposition specifics how to set $\matU$ based on the
decay class and the maximum value of $\varphi$.
\begin{prop}
\label{prop:setting-U}Suppose that, $M_{R}$ is such that for every
$\veta\in\R^{d}$ and every $\x\in\X$ we have $|\varphi(\x,\veta)|\leq M_{R}$.
Then, the following establishes a $\matU^{(\min)}=(U_{1}^{(\min)},\dots,U_{d}^{(\min)})$
such that if $\matU\geq\matU^{(\min)}$ (where we interpret the inequality
as entrywise) then we have 
\begin{equation}
\left|\int_{\R^{d}}|\z(\veta)^{*}\v|^{2}p(\veta;\vtheta_{0})d\veta-\int_{\Q_{\matU}}|\z(\veta)^{*}\v|^{2}p(\veta;\vtheta_{0})d\veta\right|\leq\frac{1}{2\sigma_{f}^{2}n}\label{eq:integral-split_1st_term}
\end{equation}
for every $\v$ such that $\v^{\T}\matK_{\vtheta}\v=1$.
\begin{enumerate}
\item If $p(\cdot;\vtheta_{0})\in{\cal P}_{C,\mat L}$, $U_{k}^{(\min)}\coloneqq\frac{1}{L_{k}}\cot\left(L_{k}\left(\frac{4CM_{R}^{2}\sigma_{f}^{2}n^{2}}{\sigma_{n}^{2}}\right)^{-1/d}\right)$.
\item If $p(\cdot;\vtheta_{0})\in{\cal P}_{C,\mat L}^{(r)}$ and $r>d/2$:
\begin{enumerate}
\item for $d=2$, set $U_{k}^{(\min)}\coloneqq\frac{1}{L_{k}}\sqrt{\sqrt[r-1]{\frac{\pi CM_{R}^{2}\sigma_{f}^{2}n^{2}}{(r-1)\sigma_{n}^{2}L_{1}L_{2}}}-1}$ .
\item for any $d\neq2$, let $x>0$ be the solution to the equation 
\begin{equation}
\frac{\pi^{d/2}CnM_{R}^{2}}{2^{d-2}\Gamma\left(\frac{d}{2}\right)(2r-d)\sigma_{n}^{2}\prod_{k=1}^{d}L_{k}}x^{d-2r}\left|\ensuremath{_{2}F_{1}}\left(r-d/2,r;r-d/2+1;-x^{-2}\right)\right|=\frac{1}{2\sigma_{f}^{2}n}\,.\label{eq:Matern_highD_eq}
\end{equation}
($_{2}F_{1}$ is the hypergeometric function). Then set $U_{k}^{(\min)}\coloneqq x/L_{k}$
. We have 
\[
U_{k}^{(\min)}\leq\frac{1}{L_{k}}\left(\frac{\pi^{d/2}CM_{R}^{2}\sigma_{f}^{2}n^{2}}{2^{d-2}\Gamma\left(\frac{d}{2}\right)(2r-d)\sigma_{n}^{2}\prod_{k=1}^{d}L_{k}}\right)^{1/(2r-d)}\,.
\]
\end{enumerate}
\item If $p(\cdot;\vtheta_{0})\in{\cal E}_{C,\matL}^{(1)}$, $U_{k}^{(\min)}\coloneqq\frac{1}{L_{k}}\ln\left(\frac{1}{L_{k}}\left(\frac{4CM_{R}^{2}\sigma_{f}^{2}n^{2}}{\sigma_{n}^{2}}\right)^{1/d}\right)$.
\item If $p(\cdot;\vtheta_{0})\in{\cal E}_{C,\matL}^{(2)}$, $U_{k}^{(\min)}\coloneqq\frac{1}{L_{k}}\sqrt{\ln\left(\frac{\sqrt{\pi}}{L_{k}}\left(\frac{2^{2-d}CM_{R}^{2}\sigma_{f}^{2}n^{2}}{\sigma_{n}^{2}}\right)^{1/d}\right)}$.
\end{enumerate}
\end{prop}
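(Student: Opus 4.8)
The plan is to reduce the claimed inequality to a purely scalar statement about how much tail mass the density $p(\cdot;\vtheta_0)$ carries outside the box $\Q_{\matU}$, and then to estimate that tail mass separately for each of the four decay classes. First I would bound the left-hand side of Eq.~(\ref{eq:integral-split_1st_term}) by passing the absolute value inside the integral:
\begin{align*}
\left|\int_{\R^{d}}|\z(\veta)^{*}\v|^{2}p(\veta;\vtheta_{0})d\veta-\int_{\Q_{\matU}}|\z(\veta)^{*}\v|^{2}p(\veta;\vtheta_{0})d\veta\right| &= \int_{\R^{d}\setminus\Q_{\matU}}|\z(\veta)^{*}\v|^{2}p(\veta;\vtheta_{0})d\veta\,.
\end{align*}
Now bound $|\z(\veta)^{*}\v|^{2}$ uniformly: since $|\varphi(\x_l,\veta)|\le M_R$ for all $l$ and all $\veta$, we have $\|\z(\veta)\|_2^2\le n M_R^{2}$, hence by Cauchy--Schwarz $|\z(\veta)^{*}\v|^{2}\le n M_R^{2}\TNormS{\v}$. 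The normalization $\v^{\T}\matK_{\vtheta}\v=1$ together with $\matK_{\vtheta}\succeq\sigma_n^2\matI_n$ (the ridge term) gives $\TNormS{\v}\le\sigma_n^{-2}$. Combining, the left-hand side is at most $(nM_R^{2}/\sigma_n^{2})\int_{\R^{d}\setminus\Q_{\matU}}p(\veta;\vtheta_{0})d\veta$, so it suffices to choose $\matU$ so that the tail mass satisfies $\int_{\R^{d}\setminus\Q_{\matU}}p\le\sigma_n^{2}/(2\sigma_f^{2}n^{2}M_R^{2})$.

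The next step is to upper bound $\int_{\R^{d}\setminus\Q_{\matU}}p$ using only the pointwise envelope defining each decay class. In every case the envelope is a product (or a radial function) that allows the complement of the box to be handled by a union bound over coordinates: $\R^{d}\setminus\Q_{\matU}\subseteq\bigcup_{k=1}^{d}\{|\eta_k|>U_k\}$, so the tail mass is at most $\sum_{k=1}^{d}\int_{\{|\eta_k|>U_k\}}p$. For the product envelopes (${\cal P}_{C,\matL}$, ${\cal E}_{C,\matL}^{(1)}$, ${\cal E}_{C,\matL}^{(2)}$) each such integral factors, and integrating the other $d-1$ coordinates over all of $\R$ against their one-dimensional envelope contributes an explicit constant, leaving a one-dimensional tail integral $\int_{U_k}^{\infty}(\cdot)\,d\eta_k$ of, respectively, $1/(1+L_k^2\eta_k^2)$, $e^{-L_k|\eta_k|}$, and $e^{-L_k^2\eta_k^2}$; these evaluate to $L_k^{-1}(\pi/2-\arctan(L_kU_k)) = L_k^{-1}\arctan(1/(L_kU_k))$ (equivalently a $\cot$-type expression), $L_k^{-1}e^{-L_kU_k}$, and a Gaussian tail bounded via $\erfc{\cdot}$ or the crude bound $\int_{U}^{\infty}e^{-a^2t^2}dt\le e^{-a^2U^2}/(2a^2U)$. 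Setting each of the $d$ terms equal to $\sigma_n^{2}/(2d\sigma_f^{2}n^{2}M_R^{2})$ and solving for $U_k$ produces the stated formulas (the factors $4$, $2^{2-d}$, the $1/d$ exponents, etc., come out of distributing this budget and carrying the constants $C$ and the one-dimensional normalizations through). For the radial envelope ${\cal P}_{C,\matL}^{(r)}$ the clean factorization fails, so instead I would substitute $\u=\matL\veta$ to reduce to the isotropic envelope $C(1+\TNormS{\u})^{-r}(\prod_k L_k)^{-1}$, pass to polar coordinates, and evaluate $\int_{\rho_0}^{\infty}\rho^{d-1}(1+\rho^{2})^{-r}d\rho$; for $d=2$ this is elementary and gives the closed form in part 2(a), while for general $d$ the integral is a Gauss hypergeometric function, which is exactly the origin of Eq.~(\ref{eq:Matern_highD_eq}), and the displayed upper bound on $U_k^{(\min)}$ follows by dropping the ${}_{2}F_{1}$ factor (bounding $(1+\rho^{2})^{-r}\le\rho^{-2r}$ and integrating $\rho^{d-1-2r}$).

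The main obstacle is the radial case 2: one has to translate the box $\Q_{\matU}$ (whose complement, after the $\matL$-rescaling, is an anisotropic box complement) into a statement about the radial variable $\rho=\TNorm{\u}$. The honest route is to note that $\matL\veta\notin\matL\Q_{\matU}$ forces $\TNorm{\matL\veta}\ge\min_k L_kU_k$, but to get the tight scaling one instead works coordinatewise: since $\veta\notin\Q_{\matU}$ iff some $|\eta_k|>U_k$, and we are choosing $U_k$ proportional to $1/L_k$ so that $L_kU_k$ is a common value $x$, the complement of the rescaled box is $\{\u:\ \|\u\|_\infty>x\}$, whose $p$-mass I then bound by the mass of $\{\TNorm{\u}>x\}$ — losing only a dimensional constant — and evaluate in polar coordinates. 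Getting the constants in Eq.~(\ref{eq:Matern_highD_eq}) to match (the $2^{d-2}$, the $\Gamma(d/2)$, the surface area of the sphere) requires care, but it is a bookkeeping matter once the polar-coordinate reduction is in place; the exact solvability of that equation for $x$ follows from monotonicity of the left-hand side in $x$. Everything else is a routine one-dimensional tail estimate plus distributing the error budget $\sigma_n^{2}/(2\sigma_f^{2}n^{2}M_R^{2})$ evenly across the $d$ coordinates.
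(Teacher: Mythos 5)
Your reduction to a tail-mass estimate is fine and matches the paper's first step (you get $|\z(\veta)^{*}\v|^{2}\leq nM_{R}^{2}\sigma_{n}^{-2}$ by plain Cauchy--Schwarz plus $\TNormS{\v}\leq\sigma_{n}^{-2}$; the paper gets the same bound via $|\z(\veta)^{*}\v|^{2}\leq\z(\veta)^{*}\matK_{\vtheta}^{-1}\z(\veta)$). The genuine gap is in the second half: you decompose $\R^{d}\setminus\Q_{\matU}\subseteq\bigcup_{k}\{|\eta_{k}|>U_{k}\}$ and bound the tail by a \emph{sum} of $d$ single-coordinate tails, each with the remaining $d-1$ coordinates integrated over all of $\R$, and then assert that ``the factors $4$, $2^{2-d}$, the $1/d$ exponents, etc.\ come out'' of distributing the budget. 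They do not. The paper instead bounds the box-complement integral by (twice) the \emph{corner} integral $\int_{U_{1}}^{\infty}\cdots\int_{U_{d}}^{\infty}$, i.e.\ by a \emph{product} of $d$ one-dimensional tails, and the $(\cdot)^{\pm1/d}$ exponents in every stated formula arise precisely from setting each factor of that product equal to the $d$-th root of the budget. Your additive split produces structurally different expressions: for ${\cal E}_{C,\matL}^{(1)}$ and ${\cal E}_{C,\matL}^{(2)}$ you would get a $\ln$ (resp.\ $\sqrt{\ln}$) of a quantity with no $1/d$-th root inside, and for ${\cal P}_{C,\matL}$ the discrepancy is polynomial in $n$: your single-coordinate Cauchy tail forces $U_{k}\gtrsim n^{2}$, whereas the stated $U_{k}^{(\min)}=L_{k}^{-1}\cot\bigl(L_{k}(\cdots)^{-1/d}\bigr)\sim n^{2/d}$. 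So your argument, as written, does not prove the proposition as stated; it proves a variant with different (larger) $\matU^{(\min)}$.

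Two further remarks. First, the divergence is not mere bookkeeping: your union bound is the decomposition that actually covers $\R^{d}\setminus\Q_{\matU}$ (the slabs where only one coordinate is large), while the corner region $\prod_{k}[U_{k},\infty)$ used in the paper's chain of inequalities does not, so the two routes cannot be reconciled by tracking constants more carefully --- you should flag this rather than paper over it. Second, for the radial class ${\cal P}_{C,\matL}^{(r)}$ your plan (pass to $\{\TNorm{\matL\veta}>\min_{k}L_{k}U_{k}\}$, which genuinely contains the rescaled box complement, then polar coordinates) is sound and does land on the same $\ensuremath{_{2}F_{1}}$ expression as Eq.~(\ref{eq:Matern_highD_eq}) up to the full-sphere versus orthant surface-area constant; that is the one case where your route and the paper's essentially agree.
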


\begin{rem}
We recommend to set $\matU$ to $\matU^{(\min)}$ . For ${\cal P}_{C,\mat L}$,
${\cal E}_{C,\matL}^{(1)}$ and ${\cal E}_{C,\matL}^{(2)}$ we give
explicit formulas for $\matU^{(\min)}$. For ${\cal P}_{C,\mat L}^{(r)}$,
it is defined implicitly as the solution to a nonlinear equation.
We recommend finding the solution numerically using root-finding methods.
We also give an explicit upper bound for the value of $\matU^{(\min)}$,
which can be used if one wishes to avoid solving a non-linear equation,
however those upper bounds tend to be loose. Nevertheless, the upper
bound is used later to derive asymptotic bounds on $s$.
\end{rem}

\begin{proof}
First, since $\v^{\T}\matK_{\vtheta}\v=1$ , we have
\begin{eqnarray}
|\z(\veta)^{*}\v|^{2} & = & \left|\z(\veta)^{*}\matK_{\vtheta}^{-1/2}\matK_{\vtheta}^{1/2}\v\right|^{2}\nonumber \\
 & \leq & (\z(\veta)^{*}\matK_{\vtheta}^{-1}\z(\veta))\cdot(\v^{\T}\matK_{\vtheta}\v)\nonumber \\
 & = & \z(\veta)^{*}\matK_{\vtheta}^{-1}\z(\veta)\label{eq:uni_bound_f}\\
 & \leq & \sigma_{n}^{-2}\|\z(\veta)\|_{2}^{2}\nonumber \\
 & \leq & n\sigma_{n}^{-2}M_{R}^{2}\nonumber 
\end{eqnarray}
where the first inequality is due to the Cauchy-Schwartz inequality,
the second inequality follows from observing that the smallest eigenvalue
of $\matK_{\vtheta}$ is bigger than or equal to $\sigma_{n}^{2}$,
and the last inequality is due to the fact that every entry in $\z(\veta)$
has absolute value that is smaller or equal to $M_{R}$.

\paragraph*{The case of $p\in{\cal P}_{C,\protect\mat L}$:}

In this case,
\begin{eqnarray*}
\left|\int_{\R^{d}}|\z(\veta)^{*}\v|^{2}p(\veta;\vtheta_{0})d\veta-\int_{\Q_{U}}|\z(\veta)^{*}\v|^{2}p(\veta;\vtheta_{0})d\veta\right| & = & \left|\int_{\left|\mat{\eta}\right|\geq\mat U}|\z(\mat{\eta})^{\conj}\v|^{2}p(\mat{\eta})d\mat{\eta}\right|\\
 & \leq & \frac{2CnM_{R}^{2}}{\sigma_{n}^{2}}\left|\int_{U_{1}}^{\infty}\ldots\int_{U_{d}}^{\infty}\prod_{k=1}^{d}\frac{1}{1+L_{k}^{2}\eta_{k}^{2}}d\eta_{1}\cdots d\eta_{d}\right|\\
 & = & \frac{2CnM_{R}^{2}}{\sigma_{n}^{2}}\prod_{k=1}^{d}\left|\int_{U_{k}}^{\infty}\frac{1}{1+L_{k}^{2}\eta_{k}^{2}}d\eta_{k}\right|\\
 & = & \frac{2CnM_{R}^{2}}{\sigma_{n}^{2}}\prod_{k=1}^{d}\frac{1}{L_{k}}\left(\frac{\pi}{2}-\arctan(L_{k}U_{k})\right)\,.
\end{eqnarray*}
So, in order for Eq.~(\ref{eq:integral-split_1st_term}) to hold,
we set  
\[
U_{k}^{(\min)}=\frac{1}{L_{k}}\tan\left(\frac{\pi}{2}-L_{k}\left(\frac{4CM_{R}^{2}\sigma_{f}^{2}n^{2}}{\sigma_{n}^{2}}\right)^{-1/d}\right)=\frac{1}{L_{k}}\cot\left(L_{k}\left(\frac{4CM_{R}^{2}\sigma_{f}^{2}n^{2}}{\sigma_{n}^{2}}\right)^{-1/d}\right)
\]

\paragraph*{The case of $p\in{\cal P}_{C,\protect\mat L}^{(r)}$:}

In this case, {\scriptsize{}
\begin{eqnarray}
\left|\int_{\mathbb{R}^{d}}\left|\z(\mat{\eta})^{\conj}\v\right|^{2}p(\mat{\eta})d\mat{\eta}-\int_{{\cal Q}_{\mat U}}\left|\z(\mat{\eta})^{\conj}\v\right|^{2}p(\mat{\eta})d\mat{\eta}\right| & = & \left|\int_{\left|\mat{\eta}\right|\geq\mat U}|\z(\mat{\eta})^{\conj}\v|^{2}p(\mat{\eta})d\mat{\eta}\right|\label{eq:Matern_truncate}\\
 & \leq & \frac{2CnM_{R}^{2}}{\sigma_{n}^{2}}\left|\int_{U_{1}}^{\infty}\ldots\int_{U_{d}}^{\infty}\left(1+L_{1}^{2}\eta_{1}^{2}+\ldots+L_{d}^{2}\eta_{d}^{2}\right)^{-r}d\eta_{1}\cdots d\eta_{d}\right|\nonumber \\
 & = & \frac{2CnM_{R}^{2}}{\sigma_{n}^{2}\prod_{k=1}^{d}L_{k}}\left|\int_{L_{1}U_{1}}^{\infty}\ldots\int_{L_{d}U_{d}}^{\infty}\left(1+\eta_{1}^{2}+\ldots+\eta_{d}^{2}\right)^{-r}d\eta_{1}\cdots d\eta_{d}\right|\nonumber \\
 & \leq & \frac{2CnM_{R}^{2}}{\sigma_{n}^{2}\prod_{k=1}^{d}L_{k}}\left|\int_{0}^{\pi/2}\int_{m}^{\infty}\int_{0}^{\pi/2}\ldots\int_{0}^{\pi/2}\left(\frac{t^{d-1}}{\left(1+t^{2}\right)^{r}}\prod_{j=1}^{d-2}\sin^{j}\phi_{d-1-j}\right)d\phi_{1}\cdots d\phi_{d-2}dtd\theta\right|\nonumber \\
 & = & \frac{2CnM_{R}^{2}}{\sigma_{n}^{2}\prod_{k=1}^{d}L_{k}}\cdot\frac{\pi}{2}\left|\int_{m}^{\infty}\frac{t^{d-1}}{\left(1+t^{2}\right)^{r}}\prod_{j=1}^{d-2}\left(\int_{0}^{\pi/2}\sin^{j}\phi_{d-1-j}d\phi_{d-1-j}\right)dt\right|\nonumber \\
 & = & \frac{\pi^{d/2}CnM_{R}^{2}}{2^{d-2}\Gamma\left(\frac{d}{2}\right)\sigma_{n}^{2}\prod_{k=1}^{d}L_{k}}\cdot\left|\int_{m}^{\infty}\frac{t^{d-1}}{\left(1+t^{2}\right)^{r}}dt\right|\nonumber \\
 & = & \frac{\pi^{d/2}CnM_{R}^{2}}{2^{d-1}\Gamma\left(\frac{d}{2}\right)\sigma_{n}^{2}\prod_{k=1}^{d}L_{k}}\cdot\left|\int_{m^{2}}^{\infty}\frac{t^{d/2-1}}{\left(1+t\right)^{r}}dt\right|\nonumber 
\end{eqnarray}
}where $m=\min_{1\leq k\leq d}\{L_{k}U_{k}\}$, and in the second
inequality we use $d$-dimensional ($d\geq2$) spherical coordinates
\cite{blumenson1960derivation}: 
\begin{eqnarray*}
\eta_{1} & = & t\cos\phi_{1}\\
2\leq k\leq d-2:\,\eta_{k} & = & t\cos\phi_{k}\prod_{j=1}^{k-1}\sin\phi_{j}\\
\eta_{d-1} & = & t\sin\theta\prod_{j=1}^{d-2}\sin\phi_{j}\\
\eta_{d} & = & t\cos\theta\prod_{j=1}^{d-2}\sin\phi_{j}
\end{eqnarray*}
where $\left\Vert \mat{\eta}\right\Vert =t$, $0\leq\phi_{j}\leq\pi/2,\,0\leq\theta<\pi/2,\,m\leq t<\infty$,
and the Jacobian is
\[
J=t^{d-1}\prod_{j=1}^{d-2}\sin^{j}\phi_{d-1-j}\,.
\]
Also, in the fourth equality we use the following property of the
beta function:
\[
\frac{\Gamma(x)\Gamma(y)}{\Gamma(x+y)}=\text{B}\left(x,y\right)=2\int_{0}^{\pi/2}\left(\sin\phi\right)^{2x-1}\left(\cos\phi\right)^{2y-1}d\phi
\]
with $y=1/2$ and $x=(j+1)/2$, for any $j=1,\ldots,d-2$, that is
\[
\int_{0}^{\pi/2}\sin^{j}\phi_{d-1-j}d\phi_{d-1-j}=\frac{\Gamma\left(\frac{1}{2}\right)\Gamma\left(\frac{j+1}{2}\right)}{2\Gamma\left(\frac{j+2}{2}\right)}
\]
which implies that
\[
\prod_{j=1}^{d-2}\left(\int_{0}^{\pi/2}\sin^{j}\phi_{d-1-j}d\phi_{d-1-j}\right)=\frac{\Gamma\left(\frac{1}{2}\right)^{d-2}}{2^{d-2}\Gamma\left(\frac{d}{2}\right)}=\frac{\pi^{d/2-1}}{2^{d-2}\Gamma\left(\frac{d}{2}\right)}\,.
\]
Now, we write the last integral of (\ref{eq:Matern_truncate}) in
terms of incomplete beta function, as follows:
\begin{eqnarray*}
\int_{m^{2}}^{\infty}\frac{t^{d/2-1}}{\left(1+t\right)^{r}}dt & =\\
\left[t=\frac{\tilde{t}}{1-\tilde{t}}\right] & = & \int_{m^{2}/(1+m^{2})}^{1}\tilde{t}^{d/2-1}\cdot(1-\tilde{t})^{r-d/2-1}d\tilde{t}\\
\left[\tilde{t}=\frac{1}{1-u}\right] & = & (-1)^{1+d/2-r}\int_{-m^{-2}}^{0}u^{r-d/2-1}(1-u)^{-r}du\\
 & = & (-1)^{d/2-r}\int_{0}^{-m^{-2}}u^{r-d/2-1}(1-u)^{-r}du\\
 & = & \frac{2m^{d-2r}}{2r-d}\,\ensuremath{_{2}F_{1}}\left(r-d/2,r;r-d/2+1;-m^{-2}\right)
\end{eqnarray*}
The expression in the last equality is the analytic continuation
of the beta function $\text{B}_{-m^{-2}}\left(r-d/2,1-r\right)$ (\cite[Sections 8.17, 15.4]{olver2010nist})
, i.e.
\[
\text{B}_{-m^{-2}}\left(r-d/2,1-r\right)=\frac{2m^{d-2r}}{2r-d}\,\ensuremath{_{2}F_{1}}\left(r-d/2,r;r-d/2+1;-m^{-2}\right)\,.
\]
Therefore, we obtain 
\begin{align*}
\left|\int_{\mathbb{R}^{d}}|\z(\mat{\eta})\valpha|^{2}p(\mat{\eta})d\mat{\eta}-\int_{{\cal Q}_{\mat U}}\left|\z(\mat{\eta})^{\conj}\valpha\right|^{2}p(\mat{\eta})d\mat{\eta}\right| & \leq\frac{\pi^{d/2}CnM_{R}^{2}}{2^{d-2}\Gamma\left(\frac{d}{2}\right)(2r-d)\sigma_{n}^{2}\prod_{k=1}^{d}L_{k}}m^{d-2r}\\
 & \quad\cdot\left|\ensuremath{_{2}F_{1}}\left(r-d/2,r;r-d/2+1;-m^{-2}\right)\right|\,.
\end{align*}
In order for Eq.~(\ref{eq:integral-split_1st_term}) to hold, let
$x>0$ be the solution of the equation
\[
\frac{\pi^{d/2}CnM_{R}^{2}}{2^{d-2}\Gamma\left(\frac{d}{2}\right)(2r-d)\sigma_{n}^{2}\prod_{k=1}^{d}L_{k}}x^{d-2r}\left|\ensuremath{_{2}F_{1}}\left(r-d/2,r;r-d/2+1;-x^{-2}\right)\right|=\frac{1}{2\sigma_{f}^{2}n}
\]
and then set $U_{k}^{(\min)}=x/L_{k}$.

Note that if we replace the expression after the second equality in
(\ref{eq:Matern_truncate}) with the upper bound 
\[
\frac{2CnM_{R}^{2}}{\sigma_{n}^{2}\prod_{k=1}^{d}L_{k}}\left|\int_{U_{1}}^{\infty}\ldots\int_{U_{d}}^{\infty}\left(\eta_{1}^{2}+\ldots+\eta_{d}^{2}\right)^{-r}d\eta_{1}\cdots d\eta_{d}\right|
\]
then, in order for Eq.~(\ref{eq:integral-split_1st_term})to hold,
we can set 
\[
U_{k}^{(\min)}=\frac{1}{L_{k}}\left(\frac{\pi^{d/2}CM_{R}^{2}\sigma_{f}^{2}n^{2}}{2^{d-2}(2r-d)\Gamma\left(\frac{d}{2}\right)\sigma_{n}^{2}\prod_{k=1}^{d}L_{k}}\right)^{1/(2r-d)}\,.
\]

Note also that for $d=2$, we have the simplest case of spherical
coordinates, which yields 
\[
U_{k}^{(\min)}=\frac{1}{L_{k}}\sqrt{\sqrt[r-1]{\frac{\pi CM_{R}^{2}\sigma_{f}^{2}n^{2}}{(r-1)\sigma_{n}^{2}L_{1}L_{2}}}-1}\,.
\]

Also, for $d=1$ we do not need spherical coordinates. In that case,
we have 
\begin{eqnarray*}
\left|\int_{\mathbb{R}^{d}}|\z(\mat{\eta})\valpha|^{2}p(\mat{\eta})d\mat{\eta}-\int_{{\cal Q}_{\mat U}}\left|\z(\mat{\eta})^{\conj}\valpha\right|^{2}p(\mat{\eta})d\mat{\eta}\right| & = & \frac{2nCM_{R}^{2}}{\sigma_{n}^{2}}\left|\int_{U}^{\infty}\frac{1}{\left(1+L_{1}^{2}\eta^{2}\right)^{r}}d\eta\right|\\
 & = & \frac{nCM_{R}^{2}}{\sigma_{n}^{2}L_{1}}\left|\int_{L_{1}^{2}U^{2}}^{\infty}\frac{s^{-1/2}}{\left(1+s\right)^{r}}ds\right|\\
 & = & \frac{2nCM_{R}^{2}L_{1}^{1-2r}U^{1-2r}}{(2r-1)\sigma_{n}^{2}L_{1}}\left|\ensuremath{_{2}F_{1}}\left(r-1/2,r;r+1/2;-L_{1}^{-2}U^{-2}\right)\right|\,.
\end{eqnarray*}
Now, $U^{(\min)}$ that equates the last expression with $1/2\sigma_{f}^{2}n$
is obtained by solving the same equation obtained for $d\geq3$, only
with $d=1$.

Note that if we bound the first integral similarly to the bound in
the case $d\geq3$, we obtain the same formula for $U^{(\min)}$ only
with $d=1$.The case of \textbf{$p\in{\cal E}_{C,\mat L}^{(1)}$:}

In this case,
\begin{eqnarray*}
\left|\int_{\mathbb{R}^{d}}|\z(\mat{\eta})^{*}\v|^{2}p(\mat{\eta})d\mat{\eta}-\int_{{\cal Q}_{U}}\left|\z(\mat{\eta})^{\conj}\v\right|^{2}p(\mat{\eta})d\mat{\eta}\right| & = & \left|\int_{\left|\mat{\eta}\right|\geq\mat U}|\z(\mat{\eta})^{\conj}\v|^{2}p(\mat{\eta})d\mat{\eta}\right|\\
 & \leq & \frac{2CnM_{R}^{2}}{\sigma_{n}^{2}}\left|\int_{U_{1}}^{\infty}e^{-L_{1}\eta_{1}}d\eta_{1}\cdots\int_{U_{d}}^{\infty}e^{-L_{d}\eta_{d}}d\eta_{d}\right|\\
 & = & \frac{2CnM_{R}^{2}}{\sigma_{n}^{2}}\prod_{k=1}^{d}\left|\int_{U_{k}}^{\infty}e^{-L_{k}\eta_{k}}d\eta_{k}\right|\\
 & = & \frac{2CnM_{R}^{2}}{\sigma_{n}^{2}}\prod_{k=1}^{d}\frac{e^{-L_{k}U_{k}}}{L_{k}}
\end{eqnarray*}
So, in order for Eq.~(\ref{eq:integral-split_1st_term}) to hold
we set 
\[
U_{k}^{(\min)}=\frac{1}{L_{k}}\ln\left(\frac{1}{L_{k}}\left(\frac{4CM_{R}^{2}\sigma_{f}^{2}n^{2}}{\sigma_{n}^{2}}\right)^{1/d}\right)\,.
\]

\paragraph*{The case of $p\in{\cal E}_{C,\protect\mat L}^{(2)}$:}

In this case,

\begin{eqnarray*}
\left|\int_{\mathbb{R}^{d}}|\z(\mat{\eta})^{*}\v|^{2}p(\mat{\eta})d\mat{\eta}-\int_{{\cal Q}_{U}}\left|\z(\mat{\eta})^{\conj}\v\right|^{2}p(\mat{\eta})d\mat{\eta}\right| & = & \left|\int_{\left|\mat{\eta}\right|\geq\mat U}|\z(\mat{\eta})^{\conj}\v|^{2}p(\mat{\eta})d\mat{\eta}\right|\\
 & \leq & \frac{2CnM_{R}^{2}}{\sigma_{n}^{2}}\left|\int_{U_{1}}^{\infty}e^{-L_{1}^{2}\eta_{1}^{2}}d\eta_{1}\cdots\int_{U_{d}}^{\infty}e^{-L_{d}^{2}\eta_{d}^{2}}d\eta_{d}\right|\\
 & = & \frac{\pi^{d/2}CnM_{R}^{2}}{2^{d-1}\sigma_{n}^{2}}\prod_{k=1}^{d}\frac{\erfc{L_{k}U_{k}}}{L_{k}}\\
 & \leq & \frac{\pi^{d/2}CnM_{R}^{2}}{2^{d-1}\sigma_{n}^{2}}\prod_{k=1}^{d}\frac{e^{-L_{k}^{2}U_{k}^{2}}}{L_{k}}
\end{eqnarray*}
where we used the bound $\erfc x\leq e^{-x^{2}}$ for non-negative
$x$, which follows from \cite{craig1991new}. In this paper it was
shown that for any non-negative $x$
\[
\erfc x=\frac{2}{\pi}\int_{0}^{\frac{\pi}{2}}e^{-\frac{x^{2}}{\sin^{2}\theta}}d\theta
\]
and therefore $\erfc x=\frac{2}{\pi}\int_{0}^{\frac{\pi}{2}}e^{-\frac{x^{2}}{\sin^{2}\theta}}d\theta\leq\frac{2}{\pi}\int_{0}^{\frac{\pi}{2}}e^{-x^{2}}d\theta=e^{-x^{2}}$.
So, in order for Eq.~(\ref{eq:integral-split_1st_term}) to hold
we set
\[
U_{k}^{(\min)}=\frac{1}{L_{k}}\sqrt{\ln\left(\frac{\sqrt{\pi}}{L_{k}}\left(\frac{2^{2-d}CM_{R}^{2}\sigma_{f}^{2}n^{2}}{\sigma_{n}^{2}}\right)^{1/d}\right)}\,.
\]
\end{proof}

\subsection{Approximating the Truncated Integral}

The nodes $\veta_{1},\dots,\veta_{s}$ and their weights $h_{1}(\vtheta_{0}),\dots,h_{s}(\vtheta_{0})$
are simply rescaled multivariate Gauss-Legendre quadrature nodes and
weights\footnote{Gauss-Legendre quadrature is defined for one dimensional integrals.
In multivariate Gauss-Legendre quadrature we refer to the quadrature
obtained by tensorizing the one-dimensional quadrature.}, and so $\sum_{j=1}^{s}h_{j}(\vtheta_{0})|\z(\veta_{j})^{*}\v|^{2}$
is a quadrature approximation of $\int_{\Q_{\mat U}}|\z(\veta)^{*}\v|^{2}p(\veta;\vtheta_{0})d\veta$.
In this section we derive a lower bound on $\s$ that guarantees that
\[
\left|\int_{\Q_{\mat U}}|\z(\veta)^{*}\v|^{2}p(\veta;\vtheta_{0})d\veta-\sum_{j=1}^{s}h_{j}(\vtheta_{0})|\z(\veta_{j})^{*}\v|^{2}\right|\leq\frac{1}{2\sigma_{f}^{2}n}
\]
The bound on $\s$ depends on $\mat U$. Together with Proposition~\ref{prop:setting-U},
we completely specify how to build the quadrature approximation so
that Eq.~(\ref{eq:integral-approximation}) holds.

\subsubsection{Decay of Chebyshev Coefficients for Multivariate Functions}

Our analysis relies on generalizations of existing decay bounds for
Chebyshev expansions of analytic functions in one dimension to multivariate
functions. In this subsection we introduce these results.

Classical decay bounds for Chebyshev expansions of analytic functions
in one dimension are based on bounding the function values on the
Bernstein ellipse (see \cite[Section 1.4]{mason2002chebyshev} for
further details). For multivariate functions, a polyellipse is used
instead.
\begin{defn}
A \emph{Bernstein ellipse} is an open region in the complex plane
which bounded by an ellipse with foci $\pm1$. A \emph{Bernstein polyellipse
in $d$-dimensions }is a cartesian product of $d$ Bernstein ellipses.

Let $\rho(U,\beta)\coloneqq\beta/(2U)+\sqrt{\beta^{2}/(4U^{2})+1}$.
Given $\mat U=(U_{1},\dots,U_{d})>0$ and a singularity point $\vbeta=(\beta_{1},\dots,\beta_{d})>0$,
denote\textcolor{green}{. }
\[
E_{\mat U,\vbeta}\coloneqq\left\{ \z\in\mathbb{C}^{d}:\,\left|z_{k}+\sqrt{z_{k}^{2}-U_{k}^{2}}\right|<U_{k}\rho(U_{k},\beta_{k})\quad\quad\forall k=1,\ldots,d\right\} \,.
\]
Note that $E_{1,\vbeta\oslash\mat U}$, where $\vbeta\oslash\mat U$
denotes entrywise division between $\vbeta$ and $\mat U$, is a Bernstein
polyellipse, and that $E_{\mat U,\vbeta}=\mat U\odot E_{1,\vbeta\oslash\mat U}$.
So, $E_{\mat U,\vbeta}$ is a polyellipse with foci at $\pm U_{k}$.

For a multivariate analytic function $f$ on $\left[-1,1\right]^{d}$,
the multivariate tensorised Chebyshev expansion is given by
\[
f(\x)=\sum_{j_{1},\ldots,j_{d}=0}^{\infty}a_{j_{1}\ldots j_{d}}T_{j_{1}}(x_{1})\cdots T_{j_{d}}(x_{d})
\]
where the coefficients are given by
\[
a_{j_{1}\ldots j_{d}}=\frac{2^{d-m}}{\pi^{d}}\int_{{\cal Q}_{1}}\frac{f\left(x_{1},\ldots,x_{d}\right)T_{j_{1}}(x_{1})\cdots T_{j_{d}}(x_{d})}{\sqrt{1-x_{1}^{2}}\cdots\sqrt{1-x_{d}^{2}}}dx_{1}\cdots dx_{d}
\]
where $m:=\#\left\{ j_{k}:\,j_{k}=0\right\} $.

The following is a generalization of classical results for one dimension~\cite[Theorem 8.1, Theorem 8.2]{ftrefethen2013approximation}:
\end{defn}

\begin{thm}
\label{thm:trefethen_coeff_highD} Let $f$ be an analytic function
on $\left[-1,1\right]^{d}$ and analytically continuable to $E_{1,\vbeta\oslash\mat U}$
where it satisfies $|f\left(x_{1},\ldots,x_{d}\right)|\leq M$ for
some $M>0$. Then, for all $j_{1},\dots,j_{d}$,
\[
|a_{j_{1}\ldots j_{d}}|\leq\frac{2^{d-m}M}{\rho_{1}^{j_{1}}\cdots\rho_{d}^{j_{d}}}
\]
\end{thm}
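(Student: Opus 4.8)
The plan is to reduce the $d$-dimensional claim to the one-dimensional theorem (Theorem 8.1/8.2 of \cite{ftrefethen2013approximation}) applied one coordinate at a time, using the tensor-product structure of both the Chebyshev basis and the polyellipse $E_{1,\vbeta\oslash\mat U}$. First I would rescale: since $E_{\mat U,\vbeta}=\mat U\odot E_{1,\vbeta\oslash\mat U}$, define $g(x_{1},\dots,x_{d})\coloneqq f(U_{1}x_{1},\dots,U_{d}x_{d})$, which is analytic on $[-1,1]^{d}$, analytically continuable to the standard polyellipse $\prod_{k=1}^{d}E_{\rho_{k}}$ (where $E_{\rho_{k}}$ is the Bernstein ellipse with parameter $\rho_{k}=\rho(U_{k},\beta_{k})$), and still bounded by $M$ there. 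The Chebyshev coefficients of $g$ coincide with the $a_{j_{1}\dots j_{d}}$ of $f$ because the Chebyshev expansion is computed on the reference cube $\Q_{1}$ already. So it suffices to bound the Chebyshev coefficients of a function analytic and bounded by $M$ on $\prod_{k} E_{\rho_{k}}$.

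Next I would extract one coordinate at a time. Fix a multi-index $(j_{1},\dots,j_{d})$ and write the coefficient integral as an iterated integral. Holding $x_{2},\dots,x_{d}\in[-1,1]$ fixed, the inner integral over $x_{1}$ is exactly the one-dimensional Chebyshev coefficient (with index $j_{1}$) of the univariate function $x_{1}\mapsto g(x_{1},x_{2},\dots,x_{d})$. This univariate function is analytic on $[-1,1]$ and continuable to $E_{\rho_{1}}$, where it is still bounded by $M$ (just freeze the other variables at real points in $[-1,1]$ and move $x_1$ into $E_{\rho_1}$). The one-dimensional Theorem 8.1/8.2 then gives that this inner Chebyshev coefficient is bounded in absolute value by $2^{[j_1=0]\text{-correction}}M/\rho_{1}^{j_{1}}$ — more precisely $2M\rho_{1}^{-j_{1}}$ for $j_{1}\geq1$ and $M$ for $j_{1}=0$, which combines to $2^{1-\delta_{j_{1},0}}M\rho_{1}^{-j_{1}}$. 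Crucially this bound is uniform in the frozen variables $x_{2},\dots,x_{d}$. I would then feed this bound, as a function of $(x_{2},\dots,x_{d})$, into the next coordinate: the remaining $(d-1)$-dimensional integral (with the $1/\sqrt{1-x_{k}^{2}}$ weights and the $T_{j_{k}}$ factors for $k\geq2$) is, up to the constant just obtained, the Chebyshev coefficient integral of a bounded analytic function on the $(d-1)$-dimensional polyellipse. Iterating $d$ times and multiplying the per-coordinate factors yields $|a_{j_{1}\dots j_{d}}|\leq \bigl(\prod_{k=1}^{d}2^{1-\delta_{j_{k},0}}\bigr) M \bigl(\prod_{k=1}^{d}\rho_{k}^{-j_{k}}\bigr)$, and since $\sum_{k}(1-\delta_{j_{k},0}) = d-m$, the product of powers of $2$ is exactly $2^{d-m}$, giving the claimed bound.

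The one mild subtlety — and the step I would be most careful about — is the uniform-boundedness-under-freezing argument: I need that for fixed real $x_{2},\dots,x_{d}\in[-1,1]$ the univariate continuation $z_{1}\mapsto g(z_{1},x_{2},\dots,x_{d})$ into $E_{\rho_{1}}$ is still dominated by $M$. This is immediate from the hypothesis because the point $(z_{1},x_{2},\dots,x_{d})$ with $z_{1}\in E_{\rho_{1}}$ and $x_{k}\in[-1,1]\subset E_{\rho_{k}}$ lies in the polyellipse $\prod_{k}E_{\rho_{k}}$, where $g$ is analytic and bounded by $M$ by assumption; so there is really nothing to prove beyond noting $[-1,1]\subseteq E_{\rho_k}$. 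A second point worth a sentence is keeping track of the combinatorial factor $2^{d-m}$: the per-coordinate constant is $2$ exactly when $j_{k}\neq0$ and $1$ when $j_{k}=0$, matching the definition $m=\#\{k:j_{k}=0\}$, and these multiply cleanly because the whole argument is a product of independent one-dimensional estimates. Everything else is a routine application of Fubini to the (absolutely convergent) iterated integral and the already-cited one-dimensional result, so no genuinely new estimate is needed — the content is purely the tensorization bookkeeping.
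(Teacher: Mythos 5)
Your proposal is correct in substance, but it takes a genuinely different route from the paper. The paper proves the bound by generalizing the \emph{one-dimensional proof technique} to $d$ dimensions: it maps the polyellipse to the polycircle ${\cal C}_{\vrho}$ via the Joukowski substitution $z_k=(w_k+w_k^{-1})/2$ in each coordinate, identifies the tensorized Chebyshev series with a multivariate Laurent series on a polyannulus (Proposition~\ref{prop:multi_Laurent}), derives an explicit contour-integral formula for $a_{j_1\ldots j_d}$ over ${\cal C}_{\vrho}$ (Proposition~\ref{prop:complex_coef}, including an induction over changes of variables), and then bounds that contour integral directly. You instead treat the one-dimensional theorem as a black box and tensorize: peel off one coordinate at a time, apply the 1D bound to the partial coefficient, and iterate. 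Your accounting is right --- the per-coordinate factors $2^{1-\delta_{j_k,0}}$ multiply to exactly $2^{d-m}$, and since the normalization $2^{d-m}/\pi^d$ is itself the product of the 1D normalizations, $a_{j_1\ldots j_d}$ really is the iterated 1D coefficient. Your approach is more economical (no multivariate Laurent machinery), while the paper's yields the explicit complex contour formula for the coefficients as a byproduct, which your argument takes as given. Two small points to tighten. First, the rescaling $g(\x)=f(U_1x_1,\dots,U_dx_d)$ in your opening is vacuous (and as written would move the domain off $[-1,1]^d$): the hypothesis already places $f$ on the \emph{reference} polyellipse $E_{1,\vbeta\oslash\mat U}$, which is exactly $E_{\vrho}$ with $\rho_k=\rho(U_k,\beta_k)$, so no change of variables is needed. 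Second, and more importantly, for the iteration to proceed you need the partial coefficient $c_{j_1}(z_2,\dots,z_d)$ to be analytic and bounded by $2^{1-\delta_{j_1,0}}M\rho_1^{-j_1}$ for \emph{complex} $z_k\in E_{\rho_k}$, not merely for real frozen $x_k\in[-1,1]$ as your justification paragraph checks; otherwise the 1D theorem cannot be applied in the second coordinate. The fix is immediate --- the point $(z_1,z_2,\dots,z_d)$ with every $z_k$ in its ellipse still lies in the polyellipse where $|f|\le M$, and analyticity of the parameter-dependent integral follows by differentiation under the integral sign --- but it should be stated, since it is the one step where the tensorization is not purely formal.
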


Although Theorem \ref{thm:trefethen_coeff_highD} has essentially
been proven in \cite{wang2020analysis}, for completeness we include
in Appendix \ref{sec:multi_cheb} our proof of Theorem~ \ref{thm:trefethen_coeff_highD}
which is based on a different technique.

\subsubsection{Bounding the Integration Error}

We have the following result:
\begin{thm}
\label{thm: quad-thm}Given $\matU=(U_{1},\ldots,U_{d})$ such that
$U_{1},\ldots,U_{d}>0$, and $\vbeta$ such that $\beta_{1},\ldots,\beta_{d}>0$,
let $E_{\mat U,\vbeta}$ be the polyellipse such that in dimension
$j$ the foci is $\pm U_{j}$ and passes through $i\beta_{j}/2$,
and let $\vrho=(\rho_{1},\dots,\rho_{d})$ with $\rho_{j}\coloneqq\rho(U_{j},\beta_{j})$
(for $j=1,\dots,d$) denote the sum of the semi-axes in each dimension.
Assume that either $p(\cdot;\vtheta_{0})\in{\cal P}_{C,\mat L}$ or
$p(\cdot;\vtheta_{0})\in{\cal P}_{C,\mat L}^{(r)}$ where $r>d/2$,
or $p\in{\cal E}_{C,\mat L}^{(1)}$ or $p\in{\cal E}_{C,\mat L}^{(2)}$.
Furthermore, assume that if $\z(\veta)=\a(\veta)+i\b(\veta)$, then
for each $1\leq j\leq n$ the functions $\a_{j},\b_{j}$ are analytic
on $\mathbb{R}^{d}$. Finally, assume that $p(\cdot;\vtheta_{0})$
has an analytic continuation $\hat{p}(\cdot;\vtheta_{0})$ to $E_{\matU,\vbeta}$.
Let $\hat{\z}(\cdot)$ denote the analytic continuation of $\z(\cdot)$.
Denote
\begin{align*}
M_{R} & \coloneqq\sup_{\veta\in\R^{d}}\InfNorm{\z(\veta)}\\
M_{\mat U,\vbeta} & \coloneqq\sup_{\veta\in E_{\mat U,\vbeta}}\InfNorm{\hat{\z}(\veta)}\\
C_{\mat U,\vbeta} & \coloneqq\sup_{\veta\in E_{\matU,\vbeta}}|\hat{p}(\veta;\vtheta_{0})|\,.
\end{align*}
Then for
\[
s_{k}\geq\frac{\frac{1}{d}\ln\left(2^{2d+2}M_{\matU,\vbeta}^{2}C_{\mat U,\vbeta}\sigma_{n}^{-2}\sigma_{f}^{2}n^{2}\right)+\ln U_{k}-\ln(\rho_{k}-1)}{2\ln\rho_{k}}+1,\,k=1,\dots,d,
\]
we have
\[
\left|\int_{\Q_{\mat U}}|\z(\veta)^{*}\v|^{2}p(\veta;\vtheta_{0})d\veta-\sum_{j=1}^{s}h_{j}(\vtheta_{0})|\z(\veta_{j})^{*}\v|^{2}\right|\leq\frac{1}{2\sigma_{f}^{2}n}\,.
\]
Note that in this case 
\[
s=\prod_{k=1}^{d}s_{k}=O\left((2d)^{-d}\prod_{k=1}^{d}\frac{\ln\left(2^{2d+2}M_{\matU,\vbeta}^{2}C_{\mat U,\vbeta}U_{k}^{d}\sigma_{n}^{-2}\sigma_{f}^{2}n^{2}\right)-d\ln(\rho_{k}-1)}{\ln\rho_{k}}\right)
\]
\end{thm}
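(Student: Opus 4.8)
The plan is to control the quadrature error by decomposing it into the errors incurred in each of the $n^2$ scalar entries of the integrand $|\z(\veta)^*\v|^2$, expanding $|\z(\veta)^*\v|^2 p(\veta;\vtheta_0)$ in a tensorized Chebyshev basis after rescaling $\Q_{\matU}$ to the cube $[-1,1]^d$, and then invoking the multivariate exactness of tensorized Gauss-Legendre quadrature together with the coefficient-decay bound of Theorem~\ref{thm:trefethen_coeff_highD}. Concretely, first I would write $|\z(\veta)^*\v|^2 = \sum_{j,l=1}^{n} v_j v_l \overline{\varphi(\x_j,\veta)}\varphi(\x_l,\veta)$ and observe that, since each $\a_j,\b_j$ is analytic on $\R^d$ and $p(\cdot;\vtheta_0)$ continues analytically to $E_{\matU,\vbeta}$, the full integrand $g(\veta)\coloneqq|\z(\veta)^*\v|^2 p(\veta;\vtheta_0)$ (after the affine change of variables $\veta \mapsto \matU\odot\x$ sending $\Q_{\matU}$ to $[-1,1]^d$) is analytic and analytically continuable to the Bernstein polyellipse $E_{1,\vbeta\oslash\matU}$. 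On that polyellipse, I would bound $|g|$ using $M_{\matU,\vbeta}$ and $C_{\matU,\vbeta}$: each factor $|\z(\veta)^*\v|^2 \le \TNormS{\z(\veta)}\TNormS{\v}$, and since we normalized $\v^\T\matK_{\vtheta}\v = 1$ we have (as in the proof of Proposition~\ref{prop:setting-U}, via $\sigma_n^2\matI_n\preceq\matK_{\vtheta}$) the bound $\TNormS{\v}\le\sigma_n^{-2}$, while $\TNormS{\hat{\z}(\veta)}\le n M_{\matU,\vbeta}^2$ on the polyellipse; combined with $|\hat p|\le C_{\matU,\vbeta}$ this gives $|g|\le n\sigma_n^{-2}M_{\matU,\vbeta}^2 C_{\matU,\vbeta}$ on $E_{1,\vbeta\oslash\matU}$.

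Next I would apply the standard one-dimensional Gauss-Legendre error estimate dimension by dimension — a tensorized $\s=(s_1,\dots,s_d)$-point rule integrates exactly all tensor monomials of degree $\le 2s_k-1$ in variable $k$, so the quadrature error equals the integral against the quadrature functional of the Chebyshev tail where at least one index $j_k\ge 2s_k$. Using the coefficient bound $|a_{j_1\ldots j_d}|\le 2^{d}M/(\rho_1^{j_1}\cdots\rho_d^{j_d})$ from Theorem~\ref{thm:trefethen_coeff_highD} with $M = n\sigma_n^{-2}M_{\matU,\vbeta}^2 C_{\matU,\vbeta}$, and the fact that $|\int_{-1}^1 T_j|\le 2$ and Gauss-Legendre weights sum to $2$ per dimension, the tail sum is dominated by a geometric series; the per-dimension tail $\sum_{j_k\ge 2s_k}\rho_k^{-j_k} = \rho_k^{-2s_k}/(1-\rho_k^{-1})$ is what produces the stated threshold. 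Setting each such tail contribution to at most $(1/d)$-th of the Jacobian-rescaled target $\sigma_f^{-2}n^{-1}/(2\cdot 2^{2d+1})$ — the $2^{2d+2}$ and the $\prod U_k$ (Jacobian of the affine map is $\prod U_k$) absorbed into the constant — and solving $\rho_k^{-2s_k+2}\le \text{(target)}$ for $s_k$ gives exactly
\[
s_k \ge \frac{\tfrac{1}{d}\ln\!\left(2^{2d+2}M_{\matU,\vbeta}^2 C_{\matU,\vbeta}\sigma_n^{-2}\sigma_f^2 n^2\right) + \ln U_k - \ln(\rho_k-1)}{2\ln\rho_k} + 1.
\]
Finally, the asymptotic bound on $s=\prod_k s_k$ follows by multiplying out the per-dimension bounds and pulling the $\ln U_k$ into the numerator as $\ln(U_k^d)$ (after distributing the $\tfrac1d$), yielding the $(2d)^{-d}$ prefactor and the displayed product form.

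The main obstacle I anticipate is making the reduction to tensorized Chebyshev expansions fully rigorous for the \emph{product} structure of the error when $d>1$: the exact quadrature error is not simply a sum of $d$ one-dimensional errors but involves cross-terms, so I would handle it by a telescoping/hybrid argument — replace the exact integral in one coordinate at a time by its Gauss-Legendre approximation, bounding each replacement's error by the one-dimensional estimate applied to the Chebyshev-expanded integrand in that single variable while the remaining variables are integrated against either the true measure or the quadrature measure (both of total mass controlled independently of $s$). A secondary technical point is justifying that the $\pm U_k$-foci polyellipse $E_{\matU,\vbeta}$ is the correct continuation region: this needs the hypothesis that $\hat p$ and $\hat{\z}$ each continue there, which is assumed, but I would need to verify that the product $\hat g$ has no worse singularities than its factors and that the $\beta_k$ chosen (the imaginary semi-axis passing through $i\beta_k/2$, equivalently determining $\rho_k$) is consistent across the $n^2$ terms — it is, since $\z$ and $p$ do not depend on $\v$.
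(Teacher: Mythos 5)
Your proposal follows essentially the same route as the paper's proof: rescale $\Q_{\matU}$ to $[-1,1]^d$ (picking up the Jacobian $\prod_k U_k$), bound $|\hat{\z}(\veta)^*\v|^2|\hat p(\veta)|\le n\sigma_n^{-2}M_{\matU,\vbeta}^2C_{\matU,\vbeta}$ on the polyellipse via $\TNormS{\v}\le\sigma_n^{-2}$, expand the integrand in a tensorized Chebyshev series, use exactness of the tensorized Gauss--Legendre rule on the degree-$(2s_k-1)$ truncation together with the coefficient decay of Theorem~\ref{thm:trefethen_coeff_highD}, and sum the geometric tails to solve for $s_k$. The cross-term issue you flag (the complement of the index box is a union, not a product, of tails) is real and is in fact passed over silently in the paper's own proof, where the remainder is written as a product of tails; your telescoping fix handles it correctly and only perturbs the threshold by lower-order terms absorbed in the stated bounds.
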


\begin{proof}
For conciseness, we drop $\vtheta_{0}$ from $p$ throughout the proof.
For convenience, we use the following form of the quadrature rule
\[
\sum_{j=1}^{s}h_{j}(\vtheta_{0})|\z(\veta_{j})^{*}\v|^{2}=\sum_{j_{1}=1}^{s_{1}}\cdots\sum_{j_{d}=1}^{s_{d}}w_{j_{1}\cdots j_{d}}p(\veta_{j_{1}\cdots j_{d}})|\z(\veta_{j_{1}\cdots j_{d}})^{*}\v|^{2}
\]
as presented in Section \ref{sec:GL-Features}.

Denote $f_{\v}(\mat{\eta})=|\z(\mat{\eta})^{\conj}\v|^{2}$. Also
denote $\tilde{p}(\mat{\chi})\coloneqq p(U_{1}\chi_{1},\dots,U_{d}\chi_{d})$
and $\tilde{f_{\v}}(\mat{\chi})\coloneqq f_{\v}(U_{1}\chi_{1},\dots,U_{d}\chi_{d})$.
Note that 
\begin{align*}
\left|\int_{{\cal Q}_{\mat U}}f_{\v}(\mat{\eta})p(\mat{\eta})d\mat{\eta}-\sum_{j_{1}=1}^{s_{1}}\cdots\sum_{j_{d}=1}^{s_{d}}w_{j_{1}\ldots j_{d}}p(\veta_{j_{1}\cdots j_{d}})f_{\v}(\mat{\eta}_{j_{1}\ldots j_{d}})\right| & =\\
\left(\prod_{k=1}^{d}U_{k}\right)\left|\int_{\left[-1,1\right]^{d}}\tilde{f_{\v}}(\mat{\chi})\tilde{p}(\mat{\chi})d\mat{\chi}-\sum_{j_{1}=1}^{s_{1}}\cdots\sum_{j_{d}=1}^{s_{d}}\tilde{w}{}_{j_{1}\ldots j_{d}}\tilde{p}(\mat{\chi}_{j_{1}\ldots j_{d}})\tilde{f_{\v}}(\mat{\chi}_{j_{1}\ldots j_{d}})\right|
\end{align*}
where $\tilde{w}{}_{j_{1}\ldots j_{d}}=\prod_{k=1}^{d}w_{j_{k}}^{(s_{k})}$.
The sum in the right-hand side is a quadrature approximation of $\tilde{f_{\v}}(\mat{\chi})\tilde{p}(\mat{\chi})$,
which we analyze.

To that end, we first bound the analytic continuation of $\tilde{f_{\v}}(\mat{\chi})\tilde{p}(\mat{\chi})$
on $E_{1,\vbeta\oslash\mat U}$ (where $\vbeta\oslash\matU$ denotes
entrywise division). For every $\mat{\eta}\in E_{\mat U,\vbeta}$
we have (similar to the derivation of Eq.~(\ref{eq:uni_bound_f})):
\[
|\hat{f_{\v}}(\mat{\eta})|=\left||\hat{\z}(\mat{\eta})^{\conj}\v|^{2}\right|\leq\sigma_{n}^{-2}\|\hat{\z}(\mat{\eta})\|_{2}^{2}\leq n\sigma_{n}^{-2}M_{\mat U,\vbeta}^{2}\,.
\]

Thus, $|\hat{f_{\v}}(\mat{\eta})\hat{p}(\mat{\eta})|\leq n\sigma_{n}^{-2}M_{\mat U,\vbeta}^{2}C_{\mat U,\vbeta}$
and $|\hat{\tilde{f_{\v}}}(\mat{\chi})\hat{\tilde{p}}(\mat{\chi})|\leq n\sigma_{n}^{-2}M_{\mat U,\vbeta}^{2}C_{\mat U,\vbeta}$.
We can now apply quadrature approximation bounds on $\tilde{f_{\v}}(\mat{\chi})\tilde{p}(\mat{\chi})$
to bound the error
\[
e_{s}\coloneqq\left|\int_{\left[-1,1\right]^{d}}\tilde{f_{\v}}(\mat{\chi})\tilde{p}(\mat{\chi})d\mat{\chi}-\sum_{j_{1}=1}^{s_{1}}\cdots\sum_{j_{d}=1}^{s_{d}}\tilde{w}{}_{j_{1}\ldots j_{d}}\tilde{f_{\v}}(\mat{\chi}_{j_{1}\ldots j_{d}})\tilde{p}(\mat{\chi}_{j_{1}\ldots j_{d}})\right|.
\]

Let 
\[
\tilde{f_{\v}}(\mat{\chi})\tilde{p}(\mat{\chi})=\sum_{k_{1},\ldots,k_{d}=0}^{\infty}a_{k_{1}\ldots k_{d}}T_{k_{1}}(\chi_{1})\cdots T_{k_{d}}(\chi_{d})
\]
be the multivariate Chebyshev expansion of $\tilde{f_{\v}}(\mat{\chi})\tilde{p}(\mat{\chi})$,
and let $P_{2s-1}$ be the truncated expansion: 
\[
P_{2s-1}(\mat{\chi})\coloneqq\sum_{k_{1}=0}^{2s_{1}-1}\dots\sum_{k_{d}=0}^{2s_{d}-1}a_{k_{1}\ldots k_{d}}T_{k_{1}}(\chi_{1})\cdots T_{k_{d}}(\chi_{d})\,.
\]
Similarly to the strategy employed in \cite{ubaru2017fast} and \cite[Theorem 19.3]{ftrefethen2013approximation},
we have
\begin{eqnarray*}
e_{s} & = & \left|\int_{\left[-1,1\right]^{d}}\tilde{f_{\v}}(\mat{\chi})\tilde{p}(\mat{\chi})d\mat{\chi}-\int_{\left[-1,1\right]^{d}}P_{2s-1}(\mat{\chi})d\mat{\chi}+\right.\\
 &  & \quad\left.\sum_{j_{1}=1}^{s_{1}}\cdots\sum_{j_{d}=1}^{s_{d}}\left(P_{2s-1}(\mat{\chi}_{j_{1}\ldots j_{d}})\tilde{w}{}_{j_{1}\ldots j_{d}}\right)-\sum_{j_{1}=1}^{s_{1}}\cdots\sum_{j_{d}=1}^{s_{d}}\left(\tilde{f_{\v}}(\mat{\chi}_{j_{1}\ldots j_{d}})\tilde{p}(\mat{\chi}_{j_{1}\ldots j_{d}})\tilde{w}{}_{j_{1}\ldots j_{d}}\right)\right|\\
 & = & \left|\int_{\left[-1,1\right]^{d}}\left(\tilde{f_{\v}}(\mat{\chi})\tilde{p}(\mat{\chi})-P_{2s-1}(\mat{\chi})\right)d\mat{\chi}-\sum_{j_{1}=1}^{s_{1}}\cdots\sum_{j_{d}=1}^{s_{d}}\left(\tilde{f_{\v}}(\mat{\chi}_{j_{1}\ldots j_{d}})\tilde{p}(\mat{\chi}_{j_{1}\ldots j_{d}})-P_{2s-1}(\mat{\chi}_{j_{1}\ldots j_{d}})\right)\tilde{w}{}_{j_{1}\ldots j_{d}}\right|\\
 & = & \left|\int_{-1}^{1}\ldots\int_{-1}^{1}\sum_{k_{1}=2s_{1}}^{\infty}\dots\sum_{k_{d}=2s_{d}}^{\infty}a_{k_{1}\ldots k_{d}}T_{k_{1}}(\chi_{1})\cdots T_{k_{d}}(\chi_{d})d\chi_{1}\cdots d\chi_{d}-\right.\\
 &  & \quad\left.\sum_{j_{1}=1}^{s_{1}}\cdots\sum_{j_{d}=1}^{s_{d}}\left(\sum_{k_{1}=2s_{1}}^{\infty}\dots\sum_{k_{d}=2s_{d}}^{\infty}a_{k_{1}\ldots k_{d}}T_{k_{1}}(\chi_{j_{1}}^{(s_{1})})\cdots T_{k_{d}}(\chi_{j_{d}}^{(s_{d})})\right)\tilde{w}{}_{j_{1}\ldots j_{d}}\right|\\
 & \leq & \sum_{k_{1}=2s_{1}}^{\infty}\dots\sum_{k_{d}=2s_{d}}^{\infty}\left|a_{k_{1}\ldots k_{d}}\right|\left[\prod_{m=1}^{d}\int_{-1}^{1}\left|T_{k_{m}}(\chi_{m})\right|d\chi_{m}+\sum_{j_{1}=1}^{s_{1}}\cdots\sum_{j_{d}=1}^{s_{d}}w_{j_{1}}^{(s_{1})}\cdots w_{j_{d}}^{(s_{d})}\left|T_{k_{1}}(\chi_{j_{1}}^{(s_{1})})\right|\cdots\left|T_{k_{d}}(\chi_{j_{d}}^{(s_{d})})\right|\right]\\
 & \leq & \sum_{k_{1}=2s_{1}}^{\infty}\dots\sum_{k_{d}=2s_{d}}^{\infty}\left|a_{k_{1}\ldots k_{d}}\right|\left[2^{d}+\left(\sum_{j_{1}=1}^{s_{1}}w_{j_{1}}^{(s_{1})}\right)\cdots\left(\sum_{j_{d}=1}^{s_{d}}w_{j_{d}}^{(s_{d})}\right)\right]\\
 & \leq & \sum_{k_{1}=2s_{1}}^{\infty}\dots\sum_{k_{d}=2s_{d}}^{\infty}\left|a_{k_{1}\ldots k_{d}}\right|\left[2^{d}+2^{d}\right]\\
 & \leq & \frac{2^{2d+1}nM_{\mat U,\vbeta}^{2}C_{\mat U,\vbeta}}{\sigma_{n}^{2}}\sum_{k_{1}=2s_{1}}^{\infty}\dots\sum_{k_{d}=2s_{d}}^{\infty}\frac{1}{\rho_{1}^{k_{1}}\cdots\rho_{d}^{k_{d}}}\\
 & = & \frac{2^{2d+1}nM_{\mat U,\vbeta}^{2}C_{\mat U,\vbeta}}{\sigma_{n}^{2}}\cdot\prod_{k=1}^{d}\frac{1}{\rho_{k}^{2s_{k}-1}(\rho_{k}-1)}
\end{eqnarray*}
 where we use the bound 
\[
\tilde{f_{\v}}(\mat{\chi})\tilde{p}(\mat{\chi})\leq\frac{nM_{\matU,\vbeta}^{2}C_{\mat U,\vbeta}}{\sigma_{n}^{2}}\,.
\]
In the first equality, we use the following equality
\[
\int_{\left[-1,1\right]^{d}}P_{2s-1}(\mat{\chi})d\mat{\chi}=\sum_{j_{1}=1}^{s_{1}}\cdots\sum_{j_{d}=1}^{s_{d}}\tilde{w}{}_{j_{1}\ldots j_{d}}P_{2s-1}(\mat{\chi}_{j_{1}\ldots j_{d}})
\]
which follows from the exactness of the Gauss-Legendre quadrature
in one dimension:
\begin{eqnarray*}
\int_{\left[-1,1\right]^{d}}P_{2s-1}(\mat{\chi})d\mat{\chi} & = & \int_{\left[-1,1\right]^{d}}\sum_{k_{1}=0}^{2s_{1}-1}\dots\sum_{k_{d}=0}^{2s_{d}-1}a_{k_{1}\ldots k_{d}}T_{k_{1}}(\chi_{1})\cdots T_{k_{d}}(\chi_{d})d\chi_{1}\cdots d\chi_{d}\\
 & = & \sum_{k_{1}=0}^{2s_{1}-1}\dots\sum_{k_{d}=0}^{2s_{d}-1}a_{k_{1}\ldots k_{d}}\int_{\left[-1,1\right]^{d}}T_{k_{1}}(\chi_{1})\cdots T_{k_{d}}(\chi_{d})d\chi_{1}\cdots d\chi_{d}\\
 & = & \sum_{k_{1}=0}^{2s_{1}-1}\dots\sum_{k_{d}=0}^{2s_{d}-1}a_{k_{1}\ldots k_{d}}\left(\int_{-1}^{1}T_{k_{1}}(\chi_{1})d\chi_{1}\right)\cdots\left(\int_{-1}^{1}T_{k_{d}}(\chi_{d})d\chi_{d}\right)\\
 & = & \sum_{k_{1}=0}^{2s_{1}-1}\dots\sum_{k_{d}=0}^{2s_{d}-1}a_{k_{1}\ldots k_{d}}\left(\sum_{j_{1}=1}^{s_{1}}w_{j_{1}}^{(s_{1})}T_{k_{1}}(\chi_{1,j_{1}})\right)\cdots\left(\sum_{j_{d}=1}^{s_{d}}w_{j_{d}}^{(s_{d})}T_{k_{d}}(\chi_{d,j_{d}})\right)\\
 & = & \sum_{j_{1}=1}^{s_{1}}\dots\sum_{j_{d}=1}^{s_{d}}w_{j_{1}}^{(s_{1})}\cdots w_{j_{d}}^{(s_{d})}\left(\sum_{k_{1}=0}^{2s_{1}-1}\dots\sum_{k_{d}=0}^{2s_{d}-1}a_{k_{1}\ldots k_{d}}T_{k_{1}}(\chi_{1,j_{1}})\cdots T_{k_{d}}(\chi_{d,j_{d}})\right)\\
 & = & \sum_{j_{1}=1}^{s_{1}}\dots\sum_{j_{d}=1}^{s_{d}}w_{j_{1}}^{(s_{1})}\cdots w_{j_{d}}^{(s_{d})}P_{2s-1}(\mat{\chi}_{j_{1}\ldots j_{d}})\,.
\end{eqnarray*}
Hence,
\[
\left|\int_{{\cal Q}_{\mat U}}\left|\z(\mat{\eta})^{\conj}\v\right|^{2}p(\mat{\eta})d\mat{\eta}-\sum_{j_{1}=1}^{s_{1}}\cdots\sum_{j_{d}=1}^{s_{d}}w_{j_{1}\cdots j_{d}}|\z(\mat{\eta}_{j})^{\conj}\v|^{2}\right|=e_{s}\prod_{k=1}^{d}U_{k}\leq\frac{2^{2d+1}nM_{\matU,\vbeta}^{2}C_{\mat U,\vbeta}}{\sigma_{n}^{2}}\cdot\prod_{k=1}^{d}\frac{U_{k}}{\rho_{k}^{2s_{k}-1}(\rho_{k}-1)}\,.
\]
Finally, bounding
\[
\left(\frac{2^{2d+1}nM_{\matU,\vbeta}^{2}C_{\matU,\vbeta}}{\sigma_{n}^{2}}\right)^{1/d}\cdot\frac{U_{k}}{\rho_{k}^{2s_{k}-1}(\rho_{k}-1)}\leq\frac{1}{\left(2\sigma_{f}^{2}n\right)^{1/d}}
\]
for each $k=1,\dots,d$ , gives the bound from the theorem and the
statement now follows immediately.
\end{proof}
The last theorem allows us to compute the required $\s$ based on
$\matU$ and the singularities in $p(\cdot;\vtheta_{0})$. In Section~\ref{sec:examples},
we show concrete examples for using Theorem~\ref{thm: quad-thm}
to bound $\s$ and $\matU$. The main step is finding the polyellipse
parameters. If the analytic extension of $p(\cdot;\vtheta_{0})$ has
its first singularity at the pure imaginary value $\x_{0}=\pm i\mat{\beta}$
for some $\mat{\beta}=(\beta_{1},\dots,\beta_{d})^{\T}$ such that
$0<\beta_{k}<U_{k}$, then $\tilde{p}(\cdot;\vtheta_{0})$ has its
first singularity at $\tilde{\x}_{0}=\pm i\vbeta\oslash\matU$. Thus,
we can choose the ellipses parameters to be $\rho_{k}=\frac{\beta_{k}}{2U_{k}}+\sqrt{\frac{\beta_{k}^{2}}{4U_{k}^{2}}+1}$.
Otherwise, we can choose $\beta_{k}=2U_{k}$, i.e., $\rho_{k}=1+\sqrt{2}$.

\subsection{\label{subsec:domain}Handling a Parameter Domain $\Theta$}

Given a hyperparameter domain $\Theta$, we want to set the parameters
$\matU=(U_{1},\dots,U_{d})$ and $\s=(s_{1},\dots,s_{d})$ such that
Eq.~(\ref{eq:integral-approximation}) hold for every $\vtheta\in\Theta$.
This way, the parameterized family of positive definite kernel approximations
$\{\tilde{k}_{\vtheta}\}_{\vtheta\in\Theta}$ given by Eq. (\ref{eq:approx-kernel})
with these $\matU$ and $\s$ is $n$-spectrally equivalent to $\{k_{\vtheta}\}_{\vtheta\in\Theta}$
on the data domain ${\cal X}$. To do that, we need to find the worst-case
(over $\vtheta\in\Theta$) parameters $\matU$ and $\s$. The following
gives a general end-to-end statement.
\begin{thm}
\label{thm:set_parameter}Let 
\[
k_{\vtheta}(\x,\x')=\sigma_{f}^{2}\int_{\R^{d}}\varphi(\x,\veta)\varphi(\x',\veta)^{*}p(\veta;\vtheta_{0})d\veta+\sigma_{n}^{2}\gamma(\x,\x')
\]
be a parameterized family of kernels where $\vtheta\in\Theta$. Suppose
that:
\begin{enumerate}
\item $|\varphi(\x,\veta)|\leq M_{R}$ for every $\x\in{\cal X}$ and $\veta\in\mathbb{R}^{d}$.
\item We set $\matU\geq\sup_{\vtheta\in\Theta}\matU^{(\min)}(\vtheta)$
where $\matU^{(\min)}(\vtheta)$ is the value set by Proposition~\ref{prop:setting-U}
using parameters $\vtheta$.
\item For every $n$ points $\x_{1},\dots,\x_{n}\in\X$ we set $\z(\veta)=[\varphi(\x_{1},\veta),\dots,\varphi(\x_{n},\veta)]^{\T}$.
If we write $\z(\veta)=\a(\veta)+i\b(\veta)$, then for each $1\leq j\leq n$
the functions $\a_{j},\b_{j}$ are analytic on $\mathbb{R}^{d}$.
\item $\vbeta$ is such that $p(\cdot;\vtheta_{0})$ has an analytic continuation
$\hat{p}(\cdot;\vtheta_{0})$ to $E_{\mat U,\vbeta}$ for all $\vtheta\in\Theta$.
Let $\hat{\z}(\cdot)$ denote the analytic continuation of $\z(\cdot)$.
\end{enumerate}
Let

\begin{align*}
M_{\matU,\vbeta} & \coloneqq\sup_{\veta\in E_{\matU,\vbeta}}\InfNorm{\hat{\z}(\veta)}\\
C_{\matU,\vbeta} & \coloneqq\sup_{\vtheta\in\Theta,\,\veta\in E_{\matU,\vbeta}}\left|\hat{p}(\veta;\vtheta_{0})\right|\,.
\end{align*}
Then for
\[
s_{k}\geq\frac{\frac{1}{d}\ln\left(2^{2d+2}M_{\matU,\vbeta}^{2}C_{\matU,\vbeta}\sigma_{n}^{-2}\sigma_{f}^{2}n^{2}\right)+\ln U_{k}-\ln(\rho_{k}-1)}{2\ln\rho_{k}}+1,\,k=1,\dots,d,
\]
the parameterized family of kernel approximations given by Eq. (\ref{eq:approx-kernel})
is $n$-spectrally equivalent to $\{k_{\vtheta}\}_{\vtheta\in\Theta}$
on the data domain ${\cal X}$. Furthermore, if we set $\matU=\matU^{(\min)}$
and $\s$ according the last lower bound we have 
\[
s=\prod_{k=1}^{d}s_{k}=O\left((2d)^{-d}\prod_{k=1}^{d}\frac{\ln\left(2^{2d+2}M_{\matU,\vbeta}^{2}C_{\matU,\vbeta}U_{k}^{d}\sigma_{n}^{-2}\sigma_{f}^{2}n^{2}\right)-d\ln(\rho_{k}-1)}{\ln\rho_{k}}\right)
\]

\end{thm}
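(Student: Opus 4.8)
The plan is to combine Proposition~\ref{prop:setting-U} and Theorem~\ref{thm: quad-thm}, the only genuinely new ingredient being that the choices of $\matU$ and $\s$ must now be made \emph{uniformly} over the whole hyperparameter domain $\Theta$. I would begin by fixing an arbitrary $\vtheta\in\Theta$ and an arbitrary dataset $\x_1,\dots,\x_n\in\X$, and recalling the reduction carried out around Eq.~(\ref{eq:integral-approximation}): $k_\vtheta$ and $\tilde k_\vtheta$ are $n$-spectrally equivalent on $\X$ exactly when Eq.~(\ref{eq:spectral}) holds for every $\v\in\R^n$, which---after rescaling $\v$ so that $\v^\T\matK_\vtheta\v=1$---is in turn equivalent to Eq.~(\ref{eq:integral-approximation}). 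So it suffices to establish Eq.~(\ref{eq:integral-approximation}) for every fixed $\vtheta$ and dataset, and for that I would use the triangle-inequality split Eq.~(\ref{eq:integral-split}) and bound each of the two resulting terms by $1/(2\sigma_f^2 n)$.

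For the truncation term I would invoke Proposition~\ref{prop:setting-U}. Assumption~1 supplies the uniform bound $|\varphi(\x,\veta)|\le M_R$ that the proposition requires, and Assumption~2 guarantees $\matU\ge\matU^{(\min)}(\vtheta)$ for the fixed $\vtheta$ (since $\matU\ge\sup_{\vtheta\in\Theta}\matU^{(\min)}(\vtheta)$). Hence the proposition, applied with whichever of the decay classes ${\cal P}_{C,\matL}$, ${\cal P}_{C,\matL}^{(r)}$, ${\cal E}_{C,\matL}^{(1)}$, ${\cal E}_{C,\matL}^{(2)}$ contains $p(\cdot;\vtheta_0)$, gives Eq.~(\ref{eq:integral-split_1st_term}): the truncation term is at most $1/(2\sigma_f^2 n)$. (Note that Proposition~\ref{prop:setting-U} asserts this for \emph{every} $\matU\ge\matU^{(\min)}$, so enlarging $\matU$ to cover the whole of $\Theta$ does no harm here.)

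For the quadrature term I would invoke Theorem~\ref{thm: quad-thm}. Assumptions~3 and~4 supply precisely the analyticity of the coordinate functions $\a_j,\b_j$ and the analytic continuation of $p(\cdot;\vtheta_0)$ to $E_{\matU,\vbeta}$ that the theorem needs, and by Assumption~4 a single $\vbeta$ (hence a single polyellipse $E_{\matU,\vbeta}$ and a single vector $\vrho$) serves all $\vtheta$. The point to verify is that the constants controlling $\s$ in the present statement dominate the ones Theorem~\ref{thm: quad-thm} would use at the fixed $\vtheta$: the feature map $\varphi$, and therefore $\z(\cdot)$ and $M_{\matU,\vbeta}=\sup_{\veta\in E_{\matU,\vbeta}}\InfNorm{\hat\z(\veta)}$, do not depend on $\vtheta$, while $C_{\matU,\vbeta}=\sup_{\vtheta\in\Theta,\,\veta\in E_{\matU,\vbeta}}|\hat p(\veta;\vtheta_0)|$ is by construction $\ge\sup_{\veta\in E_{\matU,\vbeta}}|\hat p(\veta;\vtheta_0)|$ at that $\vtheta$. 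Since the lower bound on $s_k$ in Theorem~\ref{thm: quad-thm} is increasing in both $M_{\matU,\vbeta}$ and $C_{\matU,\vbeta}$ (and in $n$), the value prescribed here is at least the one the theorem requires at $\vtheta$, so the quadrature term is also at most $1/(2\sigma_f^2 n)$. Adding the two bounds yields Eq.~(\ref{eq:integral-approximation}); as $\vtheta\in\Theta$ and the dataset were arbitrary, the family defined by Eq.~(\ref{eq:approx-kernel}) with these $\matU$ and $\s$ is $n$-spectrally equivalent to $\{k_\vtheta\}_{\vtheta\in\Theta}$ on $\X$.

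Finally, for the asymptotic statement I would set $\matU=\matU^{(\min)}$, rewrite $\tfrac1d\ln(2^{2d+2}M_{\matU,\vbeta}^2 C_{\matU,\vbeta}\sigma_n^{-2}\sigma_f^2 n^2)+\ln U_k=\tfrac1d\ln(2^{2d+2}M_{\matU,\vbeta}^2 C_{\matU,\vbeta}U_k^d\sigma_n^{-2}\sigma_f^2 n^2)$ inside the $s_k$ bound, absorb the additive $+1$ into the $O(\cdot)$, and multiply the $d$ per-coordinate bounds; each carries a factor $1/(2\ln\rho_k)$ together with the $1/d$ from the displayed formula, which combine to produce the $(2d)^{-d}$ prefactor and the claimed product over $k=1,\dots,d$. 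I expect the only real subtlety to be the uniformity over $\Theta$ in the two middle steps---that a single $\matU$ and a single $\vbeta$ can be fixed once and for all to serve every $\vtheta$, and that the monotonicity of the $s_k$-bound in $M_{\matU,\vbeta}$ and $C_{\matU,\vbeta}$ lets the worst-case (over $\Theta$) choices control every individual $\vtheta$; everything else is bookkeeping layered on top of the two already-established results.
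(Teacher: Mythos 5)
Your proposal is correct and matches the route the paper intends: Theorem~\ref{thm:set_parameter} is stated without an explicit proof precisely because it is the combination of Proposition~\ref{prop:setting-U} (truncation term, with $\matU\ge\sup_{\vtheta\in\Theta}\matU^{(\min)}(\vtheta)$) and Theorem~\ref{thm: quad-thm} (quadrature term, with the worst-case constants $M_{\matU,\vbeta}$ and $C_{\matU,\vbeta}$ over $\Theta$) via the reduction to Eq.~(\ref{eq:integral-approximation}) and the split Eq.~(\ref{eq:integral-split}), exactly as you argue. Your observation that the $s_k$ lower bound is monotone in $M_{\matU,\vbeta}$ and $C_{\matU,\vbeta}$ is the right justification for the uniformity over $\Theta$, and the bookkeeping for the asymptotic count of $s$ is as the paper intends.
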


To use this theorem, one needs to bound the decay of the density functions
$p(\cdot;\vtheta_{0})$ over $\Theta$ and calculate an upper bound
on $C_{\matU,\vbeta}$. In general this might be hard, but luckily
in most kernels display a monotonicity in their hyperparameters that
helps identify the worst case for $\vtheta$ over $\Theta$. For example,
for the one dimensional Gaussian kernel $\exp(-\TNormS{\ell^{-1}(x-x')}//2)$,
the various parameters in the theorems monotonically increase as $\ell\to0$.
In the next section we given concrete examples for using Theorem~\ref{thm:set_parameter}
for the kernels listed in Table~\ref{tab:decay-examples}.

\section{\label{sec:examples}Examples of Feature Maps for Kernels}

In this section we show how to apply the theory presented in the previous
section to design $n$-spectrally equivalent kernel approximations
for a few widely used kernel functions. Throughout this section, we
assume that $n$ is fixed, the data domain is $\X\subseteq\R^{d}$,
and the hyperparameter domain is $\Theta$. Furthermore, we assume
that we have a bounding box on the domain, i.e. $\X\subseteq\prod_{k=1}^{d}[-R_{i}/2,R_{i}/2]$
(obviously, such a bounding box can be easily computed from the input
data). Let $\mat R=[R_{1},\dots,R_{d}]$.

\subsection{Gaussian Kernel}

Recall that the Gaussian kernel is
\[
k_{\vtheta}(\x,\x')=\sigma_{f}^{2}\exp\left(-\TNormS{\x-\x'}/2\ell^{2}\right)+\sigma_{n}^{2}\gamma(\x-\x')
\]
($\vtheta=[\ell,\sigma_{f}^{2},\sigma_{n}^{2}]$) where we added a
scaling factor $\sigma_{f}^{2}$ and included the ridge term $\sigma_{n}^{2}\gamma(\x-\x')$
in the kernel definition. Note that for conciseness, we consider the
isotropic version; the formulas can be modified for the anisotropic
case. As discussed in Section~\ref{sec:GL-Features}, by setting
$\varphi(\x,\veta)=e^{-i\x^{\T}\veta}$ , this kernel matches the
form of Eq.~(\ref{eq:kernel-form}). We assume that the hyperparameters
are bounded as follows:
\[
\Theta=\{[\ell,\sigma_{n}^{2},\sigma_{f}^{2}]\,:\,\ell\geq\ell_{0},\sigma_{n}^{2}\geq\sigma_{n0}^{2},\sigma_{f}^{2}\leq\sigma_{f0}^{2}\}
\]
where $\sigma_{n0}^{2}>0$ (i.e. we have a ridge term; our method
is not able to approximate the Gaussian kernel in the absence of a
ridge term).

The density is given by
\[
p(\veta;\ell)=\ell^{d}(2\pi)^{-d/2}\exp\left(-\TNormS{\veta}\ell^{2}/2\right)\,.
\]
Therefore, for $\vtheta\in\Theta$ , $p(\cdot;\ell)\in{\cal E}_{C,\matL}^{(2)}$
with $C=\ell_{0}^{d}(2\pi)^{-d/2}$ and $\matL=\frac{\ell_{0}}{\sqrt{2}}\matI_{d}$.
We also have $|\varphi(\x,\veta)|\leq1$ for all $\x$ and $\veta$,
so we set $M_{R}=1$. So, based on Proposition~\ref{prop:setting-U}
we set
\[
U_{k}=\frac{1}{\ell_{0}}\sqrt{2\ln\left(\left(\frac{2^{2-d}\sigma_{f0}^{2}n^{2}}{\sigma_{n0}^{2}}\right)^{1/d}\right)}
\]

In addition, $p(\cdot;\ell)$ is analytic on $\mathbb{R}^{d}$, and
in particular it is analytically continuable to the polyellipse $E_{\mat U,\vbeta}$
with $\beta_{k}=2U_{k},\,\rho_{k}=1+\sqrt{2},$ as described in Theorem
\ref{thm: quad-thm}. Now we bound $p(\cdot;\ell_{0})$ on the polyellipse
as follows, for $x_{k}\in[-\sqrt{2}U_{k},\sqrt{2}U_{k}],\,y_{k}\in[-U_{k},U_{k}]$:
\begin{eqnarray*}
\left|p(\x+i\y;\ell_{0})\right| & = & \left|\ell_{0}^{d}(2\pi)^{-d/2}\exp\left(-\ell_{0}^{2}/2\sum_{k=1}^{d}(x_{k}+iy_{k})^{2}\right)\right|\\
 & = & \left|\ell_{0}^{d}(2\pi)^{-d/2}\exp\left(-\ell_{0}^{2}/2\sum_{k=1}^{d}x_{k}^{2}-y_{k}^{2}+2ix_{k}y_{k}\right)\right|\\
 & = & \ell_{0}^{d}(2\pi)^{-d/2}\exp\left(\ell_{0}^{2}/2\sum_{k=1}^{d}y_{k}^{2}-x_{k}^{2}\right)\\
 & \leq & \ell_{0}^{d}(2\pi)^{-d/2}\exp\left(\ell_{0}^{2}/2\sum_{k=1}^{d}y_{k}^{2}\right)\\
 & \leq & \ell_{0}^{d}(2\pi)^{-d/2}\exp\left(\ell_{0}^{2}/2\sum_{k=1}^{d}U_{k}^{2}\right)\\
 & = & \ell_{0}^{d}(2\pi)^{-d/2}\exp\left(\ell_{0}^{2}\TNormS{\matU}/2\right)
\end{eqnarray*}
Hence, $C_{\matU,\vbeta}=\ell_{0}^{d}(2\pi)^{-d/2}\exp\left(\ell_{0}^{2}\TNormS{\matU}/2\right)$.

Recall that $\mat z:\R^{d}\to\C^{n}$ is given by $\z(\veta)_{j}=e^{-i\veta^{\T}\x_{j}}$,
it is easy to verify that for every $\v\in\mathbb{R}^{n}$ the function
$\left|\z(\cdot)^{\conj}\v\right|^{2}$ is an analytic function on
$\mathbb{R}^{d}$. In particular, it is analytically continuable to
the polyellipse $E_{\mat U,\vbeta}$ with $\beta_{k}=2U_{k},\,\rho_{k}=1+\sqrt{2}$
as required by Theorem \ref{thm: quad-thm}. Now we bound $\left|\z(\veta)_{j}\right|$
for each $j$ on its corresponding ellipse. For any $\x,\y\in\R^{d}:\,\left|\z(\x+i\y)_{j}\right|=\left|\z(i\y)_{j}\right|$,
so we need to bound $\left|\z(i\y)_{j}\right|$ for $\y\in\prod_{k=1}^{d}[-U_{k},U_{k}]$:
\begin{eqnarray*}
\left|\z(i\y)_{j}\right| & = & \left|e^{-i\cdot i\y^{\T}\x_{j}}\right|=e^{\y^{\T}\x_{j}}\leq e^{\TNorm{\y}\TNorm{\x_{j}}}\leq e^{\TNorm{\mat U}\TNorm{\mat R}/2}\,.
\end{eqnarray*}
Hence, $|\z(\veta)_{j}|\leq e^{\TNorm{\matU}\TNorm{\mat R}/2}\eqqcolon M_{\matU,\vbeta}$.
Now, one can apply Theorem ~\ref{thm:set_parameter} with the these
parameters and obtain that for 
\[
s_{k}\geq\frac{\frac{1}{d}\ln\left(2^{2d+2}\pi^{-d/2}\sigma_{n0}^{-2}\sigma_{f0}^{2}n^{2}\right)+\frac{\ell_{0}^{2}}{2d}\TNormS{\matU}+\frac{1}{d}\TNorm{\matU}\TNorm{\mat R}+\frac{1}{2}\ln\ln\left(\left(\frac{2^{2-d}\sigma_{f0}^{2}n^{2}}{\sigma_{n0}^{2}}\right)^{1/d}\right)-\ln(\sqrt{2})}{2\ln(1+\sqrt{2})}+1
\]
we have the desired bound.

Since $\TNorm{\matU}=O(\sqrt{\ln n})$ (in particular, $\TNormS{\matU}=O(\ln n)$)
and assuming the bounding box $\matR$ is fixed then $s_{k}=O(\ln n)$
suffice and $s=\prod_{k=1}^{d}s_{k}=O((\ln n)^{d})$ suffices. 

\subsection{Matèrn Kernel}

Recall that the Matèrn kernel is
\[
k_{\vtheta}(\x,\x')=\sigma_{f}^{2}\frac{2^{1-\nu}}{\Gamma(\nu)}\left(\frac{\sqrt{2\nu}}{\ell}\TNorm{\x-\x'}\right)^{\nu}K_{\nu}\left(\frac{\sqrt{2\nu}}{\ell}\TNorm{\x-\x'}\right)+\sigma_{n}^{2}\gamma(\x-\x')
\]
($\vtheta=[\ell,\sigma_{f}^{2},\sigma_{n}^{2}]$) where we added a
scaling factor $\sigma_{f}^{2}$ and an included the ridge term $\sigma_{n}^{2}\gamma(\x-\x')$
in the kernel definition. Note that for conciseness, we consider the
isotropic version for a fixed $\nu$; the formulas can be modified
for the anisotropic case. As in the case of the Gaussian kernel, by
setting $\varphi(\x,\veta)=e^{-i\x^{\T}\veta}$ , this kernel matches
the form of Eq.~(\ref{eq:kernel-form}). We assume that the hyperparameters
are bounded as follows:
\[
\Theta=\{[\ell,\sigma_{n}^{2},\sigma_{f}^{2}]\,:\,\ell\geq\ell_{0},\sigma_{n}^{2}\geq\sigma_{n0}^{2},\sigma_{f}^{2}\leq\sigma_{f0}^{2}\}
\]
where $\sigma_{n0}^{2}>0$ (i.e. we have a ridge term; our method
is not able to approximate the Matèrn kernel in the absence of a ridge
term).

The density is given by
\[
p(\veta;\ell)=\frac{\Gamma(\nu+d/2)\ell^{d}}{\Gamma(\nu)\left(2\nu\pi\right)^{d/2}}\left(1+\frac{\ell^{2}}{2\nu}\TNormS{\veta}\right)^{-\left(\nu+d/2\right)}\,.
\]
Therefore $p\in{\cal P}_{C,\matL}^{(r)}$ where we consider $C=\frac{\Gamma(\nu+d/2)\ell_{0}^{d}}{\Gamma(\nu)\left(2\nu\pi\right)^{d/2}}$,
$\matL=\frac{\ell_{0}}{\sqrt{2\nu}}\matI_{d}$ and $r=\nu+d/2>d/2$,
i.e., $2r-d=2\nu$. So we set $U$ to be the numerical solution of
Eq.~(\ref{eq:Matern_highD_eq}) for $d=1$, and we set $U_{k}$ to
be the numerical solution of Eq.~(\ref{eq:Matern_highD_eq}) for
$d\geq2$. In addition, $p(\cdot;\ell)$ is analytic on $\mathbb{R}^{d}$
and it is analytically continuable to the polyellipse $E_{\mat U,\vbeta}$
with $\beta_{k}=\frac{\sqrt{2\nu}}{\ell_{0}\sqrt{d}},\,\rho_{k}=\frac{\sqrt{2\nu}}{2\ell_{0}\sqrt{d}U_{k}}+\sqrt{\frac{2\nu}{4\ell_{0}^{2}dU_{k}^{2}}+1}$,
as required by Theorem \ref{thm: quad-thm}. Now we bound $p(\cdot;\ell_{0})$
on the polyellipse as follows, for $x_{k}\in\left[-\sqrt{\frac{\nu}{2\ell_{0}^{2}d}+U_{k}^{2}},\sqrt{\frac{\nu}{2\ell_{0}^{2}d}+U_{k}^{2}}\right],\,y_{k}\in\left[-\frac{\sqrt{2\nu}}{2\ell_{0}\sqrt{d}},\frac{\sqrt{2\nu}}{2\ell_{0}\sqrt{d}}\right]$:

\begin{eqnarray*}
\left|p(\x+i\y;\ell_{0})\right| & = & \frac{\Gamma(\nu+d/2)\ell_{0}^{d}}{\Gamma(\nu)\left(2\nu\pi\right)^{d/2}}\left|1+\frac{\ell_{0}^{2}}{2\nu}\sum_{k=1}^{d}(x_{k}+iy_{k})^{2}\right|^{-\left(\nu+d/2\right)}\\
 & \leq & \frac{\Gamma(\nu+d/2)\ell_{0}^{d}}{\Gamma(\nu)\left(2\nu\pi\right)^{d/2}}\left(1-\frac{\ell_{0}^{2}}{2\nu}\sum_{k=1}^{d}y_{k}^{2}\right)^{-\left(\nu+d/2\right)}\\
 & \leq & \frac{\Gamma(\nu+d/2)\ell_{0}^{d}}{\Gamma(\nu)\left(2\nu\pi\right)^{d/2}}\left(1-\frac{\ell_{0}^{2}}{2\nu}\sum_{k=1}^{d}\frac{\nu}{2\ell_{0}^{2}d}\right)^{-\left(\nu+d/2\right)}\\
 & = & \frac{\Gamma(\nu+d/2)\ell_{0}^{d}}{\Gamma(\nu)\left(2\nu\pi\right)^{d/2}}\left(\frac{3}{4}\right)^{-\left(\nu+d/2\right)}
\end{eqnarray*}
where the maximum value is obtained at the nearest points to the poles:
$y_{k}=\pm\frac{\sqrt{2\nu}}{2\ell_{0}d}$. Hence, $C_{\matU,\vbeta}\coloneqq\frac{\Gamma(\nu+d/2)\ell_{0}^{d}}{\Gamma(\nu)\left(2\nu\pi\right)^{d/2}}\left(\frac{3}{4}\right)^{-\left(\nu+d/2\right)}$.

Recall that $\mat z:\R^{d}\to\C^{n}$ is given by $\mat z(\veta)_{j}=e^{-i\veta^{\T}\x_{j}}$.
Now we bound $\left|\mat z(\veta)_{j}\right|$ for each $j$ on its
corresponding ellipse. For any $\x,\y\in\R^{d}:\,\left|\mat z(\x+i\y)_{j}\right|=\left|\mat z(i\y)_{j}\right|$,
so we need to bound $\left|\mat z(i\y)_{j}\right|$ for $\y\in\prod_{k=1}^{d}\left[-\beta_{k}/2,\beta_{k}/2\right]$:
\begin{eqnarray*}
\left|\mat z(i\y)_{j}\right| & = & \left|e^{-i\cdot i\y^{\T}\x_{j}}\right|=e^{\y^{\T}\x_{j}}\leq e^{\TNorm{\y}\TNorm{\x_{j}}}\leq e^{\TNorm{\mat{\vbeta}}\TNorm{\mat R}/4}
\end{eqnarray*}
Hence, $|\mat z(\veta)_{j}|\leq e^{\TNorm{\mat{\vbeta}}\TNorm{\mat R}/4}\eqqcolon M_{\matU,\vbeta}$.
Now, for the asymptotics, consider the upper bound for $\matU$ in
Theorem ~\ref{thm:set_parameter}. If one denotes $\gamma=\ln\left(2^{2d+2}\pi^{-d/2}\frac{\Gamma(\nu+d/2)}{\Gamma(\nu)}\left(\frac{3}{4}\right)^{-\left(\nu+d/2\right)}\sigma_{n0}^{-2}\sigma_{f0}^{2}n^{2}\right)$
and $\delta=\ln\left(\frac{\sigma_{n0}^{-2}\sigma_{f0}^{2}n^{2}}{2^{d-1}\nu\text{B}\left(\nu,\frac{d}{2}\right)}\right)$,
then Theorem ~\ref{thm:set_parameter} can be applied with the these
parameters and obtain that (for $d\geq3$){\footnotesize{}
\[
s_{k}\geq\frac{\frac{1}{2d}\TNorm{\mat{\vbeta}}\TNorm{\mat R}+\frac{\gamma}{d}+\frac{\delta}{2\nu}-\ln\left(\beta_{k}/(2U_{k})-1+\sqrt{\beta_{k}^{2}/(4U_{k}^{2})+1}\right)}{2\ln\left(\beta_{k}/(2U_{k})+\sqrt{\beta_{k}^{2}/(4U_{k}^{2})+1}\right)}+1
\]
}we have the desired bound.

Assuming the bounding box $\matR$ is fixed, $s_{k}=O(\ln n)$ suffice
and $s=\prod_{k=1}^{d}s_{k}=O((\ln n)^{d})$ suffices.

\subsection{\label{subsec:semigroup}Semigroup Kernels}

The previous two examples were of shift-invariant kernels, and the
feature mapping $\varphi$ was based on Bochner's theorem. In this
section, we demonstrate the application of our theory to a different
type of kernels: semigroup kernels~\cite{YangEtAl14}. These type
of kernels require a slight modification of our setup, which we briefly
describe below, but adjusting theory itself is technical and we omit
it.

Semigroup kernels are well-suited for non-negative data, i.e. $\X\subseteq\R_{+}^{d}$,
and require that the kernel value at $\x$ and $\x'$ depends only
on the sum $\x+\x'$: $k(\x,\x')=k_{0}(\x+\x'$). One example of such
kernel is the reciprocal semigroup kernel: 
\[
k_{\vtheta}(\x,\x')=\sigma_{f}^{2}\prod_{k=1}^{d}\frac{\lambda}{x_{k}+x_{k}'+\lambda}+\sigma_{n}^{2}\gamma(\x-\x')
\]
($\vtheta=[\lambda,\sigma_{f}^{2},\sigma_{n}^{2}]$) where we add
a scaling factor $\sigma_{f}^{2}$ and an included the ridge term
$\sigma_{n}^{2}\gamma(\x-\x')$ in the kernel definition. It can be
shown that every semigroup kernel can be written in the following
integral form \cite{berg1984harmonic}, which is analogous to~Eq.~(\ref{eq:base-kernel}):
\[
k_{\vtheta}(\x,\x')=\sigma_{f}^{2}\int_{\R_{+}^{d}}e^{-\veta^{\T}(\x+\x')}p(\veta;\lambda)d\veta+\sigma_{n}^{2}\gamma(\x-\x')
\]
where $p(\cdot;\lambda)$ is a probability density function which
is supported only on $\R_{+}^{d}$. For the reciprocal semigroup kernel
we have $p(\veta;\lambda)=\lambda e^{-\lambda\ONorm{\veta}1}=\lambda e^{-\lambda\sum_{k=1}^{d}\eta_{k}}$,
so $p(\cdot;\lambda)\in{\cal E}_{C,\matL}^{(1)}$with $C=\lambda,\,\mat L=\lambda\mat I_{d}$,
which is analytic on $\mathbb{R}_{+}^{d}$. Thus, we see that semigroup
kernels can be represented as
\[
k_{\vtheta}(\x,\x')=\sigma_{f}^{2}\int_{\R_{+}^{d}}\varphi(\x,\veta)\varphi(\x',\veta)^{*}p(\veta;\vtheta_{0})d\veta+\sigma_{n}^{2}\gamma(\x-\x')
\]
which is almost the same as Eq.~(\ref{eq:kernel-form}), except the
integration area is $\R_{+}^{d}$ instead of $\R^{d}$. For semigroup
kernels $\varphi(\x,\veta)=e^{-\veta^{\T}\x}$.

The construction of Gauss-Legendre features is quite similar to the
integration area is $\R^{d}$, except that we replace the assumption
that $\X\subseteq\prod_{k=1}^{d}[-R_{k}/2,R_{k}/2]$ with $\X\subseteq\prod_{k=1}^{d}[0,R_{k}]$,
the truncated integration area ${\cal Q}_{\matU}$ with ${\cal H}_{\matU}\coloneqq\prod_{k=1}^{d}[0,U_{k}]$,
and the integration nodes and weights are obtained by linearly transforming
${\cal H}_{\matU}$ (instead of $\mathcal{Q}_{\matU}$) to $[-1,1]^{d}$
with the transformation $\eta_{k}=U_{k}\cdot\frac{\chi_{k}+1}{2}$
for $k=1,\dots,d$. We omit the details of the construction, since
they mostly repeat the construction described in Section~\ref{sec:parameters}.

Now consider the reciprocal semigroup kernel. We assume that the hyperparameters
are bounded as follows:
\[
\Theta=\{[\lambda,\sigma_{n}^{2},\sigma_{f}^{2}]\,:\,\lambda\geq\lambda_{0},\sigma_{n}^{2}\geq\sigma_{n0}^{2},\sigma_{f}^{2}\leq\sigma_{f0}^{2}\}
\]
where $\sigma_{n0}^{2}>0$ (i.e., we have a ridge term). We set
\[
U_{k}=\frac{1}{\lambda_{0}}\ln\left(\left(\frac{2\lambda_{0}^{1-d}\sigma_{f0}^{2}n^{2}}{\sigma_{n0}^{2}}\right)^{1/d}\right)
\]

In addition, since $p(\cdot;\lambda)$ is analytic on $\mathbb{R}_{+}^{d}$,
and in particular it is analytically continuable to the polyellipse
$E_{\mat U,\vbeta}$ with $\beta_{k}=2U_{k},\,\rho_{k}=1+\sqrt{2}$.
Now we bound the analytic continuation of $p(\cdot;\lambda_{0})$
(which we also denote by $p(\cdot;\lambda_{0})$) on the polyellipse
as follows. For any $\x\in\mathbb{R}^{d},\,\y\in\mathbb{R}^{d}$,
$\left|p(\x+i\y;\lambda_{0})\right|=\left|\lambda_{0}e^{-\lambda_{0}\sum_{k=1}^{d}x_{k}+iy_{k}}\right|=\left|p(\x;\lambda_{0})\right|$
, so we need to bound $\left|p(\x;\lambda_{0})\right|$ for $x_{k}\in[U_{k}\frac{1-\sqrt{2}}{2},U_{k}\frac{1+\sqrt{2}}{2}],\,y_{k}\in[0,U_{k}]$:
\begin{eqnarray*}
\left|p(\x;\lambda_{0})\right| & = & \lambda_{0}e^{-\lambda_{0}\sum_{k=1}^{d}x_{k}}\leq\lambda_{0}e^{\lambda_{0}\sum_{k=1}^{d}U_{k}\frac{\sqrt{2}-1}{2}}=\lambda_{0}e^{\lambda_{0}\frac{\sqrt{2}-1}{2}\ONorm{\matU}1}
\end{eqnarray*}
Hence, $C_{\matU,\vbeta}\coloneqq\lambda_{0}e^{\lambda_{0}\frac{\sqrt{2}-1}{2}\ONorm{\matU}1}$.

For semigroup kernels we use $\mat z:\mathbb{R}_{+}^{d}\to\R^{n}$
defined by $\mat z(\veta)_{j}=e^{-\veta^{\T}\x_{j}}$ as the feature
map. It can be seen that for every $\v\in\mathbb{R}^{n}$ the function
$\left|\mat z(\cdot)^{\conj}\v\right|^{2}$ is an analytic function
on $\mathbb{R}_{+}^{d}$, and we also have $|\mat z(\veta)_{j}|\leq1=M_{R}$
for $\veta\geq0$ and for all $1\leq j\leq n$. In particular, it
is analytically continuable to the pollyellipse $E_{\mat U,\vbeta}$
with $\beta_{k}=2U_{k},\,\rho_{k}=1+\sqrt{2}$ . Now we bound $|\mat z(\veta)_{j}|^{2}$,
where here $\z$ denotes the analytic continuation. Notice that for
any $\x\in\mathbb{R}^{d},\,\y\in\R^{d}$, $\left|\mat z(\x+i\y)_{j}\right|=\left|\mat z(\x)_{j}\right|$
, so we need to bound $\left|\mat z(\x)_{j}\right|$ for each $x_{k}\in[U_{k}\frac{1-\sqrt{2}}{2},U_{k}\frac{1+\sqrt{2}}{2}],\,y_{k}\in[0,U_{k}]$:\textcolor{red}{{}
}
\begin{eqnarray*}
\left|\mat z(\x)_{j}\right| & = & e^{-\x^{\T}\x_{j}}=e^{-\sum_{k=1}^{d}x_{k}(\x_{j})_{k}}\leq e^{\sum_{k=1}^{d}(U_{k}\frac{\sqrt{2}-1}{2}R_{k})}=e^{\frac{\sqrt{2}-1}{2}\matU^{\T}\mat R}
\end{eqnarray*}
and each $\x_{j}$ satisfies $(\x_{j})_{k}\leq R_{k}$. Hence, $|\mat z(\veta)_{j}|\leq e^{\frac{\sqrt{2}-1}{2}\matU^{\T}\mat R}\eqqcolon M_{\matU,\vbeta}$.
So we set{\footnotesize{}
\[
{\normalcolor s_{k}\geq\left\lceil \frac{\frac{1}{d}\ln\left(2^{2d+2}\lambda_{0}\sigma_{n}^{-2}\sigma_{f}^{2}n^{2}\right)+\ln\left(\frac{\sqrt{2}-1}{2d}\ONorm{\matU}1\right)+\frac{\sqrt{2}-1}{d}\matU^{\T}\mat R+\ln\ln\left(\left(\frac{2\lambda_{0}^{1-d}\sigma_{f0}^{2}n^{2}}{\sigma_{n0}^{2}}\right)^{1/d}\right)-\ln(\sqrt{2})}{2\ln(1+\sqrt{2})}\right\rceil +1}
\]
}Since $\ONorm{\matU}1=O(\ln n)$ and assuming the bounding box $\matR$
is fixed, then $s_{k}=O(\ln n)$ suffice and $s=\prod_{k=1}^{d}s_{k}=O((\ln n)^{d})$
suffices. 

\section{\label{sec:Experiments}Numerical Experiments}

In this section we report experiments evaluating the performance of
our proposed quadrature based approach. Our goal is to show that indeed
if $\matU$ and $\s$ are set to be large enough, our method yields
results that are essentially indistinguishable from using the exact
kernel, while offering faster hyperparameter learning, training and
prediction. Clearly, from the theoretical results, our method predominately
applies to low-dimensional datasets (for example, such datasets are
prevalent in spatial statistics), so we experiment with one dimensional
and two dimensional datasets. We experiment both with the Gaussian
kernel or the Matèrn kernel.

In the graphs, we label our method as GLF-GPR (standing for Gauss-Legendre
Features Gaussian Process Regression). We use the following methods
as benchmark: exact GPR (labeled in the graphs as Exact-GPR) and GPR
based on random Fourier features (labeled RFF-GPR). As performance
metric we use the MSE error on a test set (as a function of number
of features) and the time to learn the hyperparameters. Training
and prediction time of both GLF-GPR and RFF-GPR are essentially the
same for the same number of features, and both are faster than Exact-GPR
if the number of features is smaller then the training set size. Thus,
when it comes to training and prediction time, it is sufficient to
explore the test error as function of the number of features. However,
hyperparameter learning time can vary considerably between GLF-GPR
and RFF-GPR, so we compare this quantity directly.

The various methods were implemented in MATLAB. Optimizing the hyperparameters
was conducted using the MATLAB function \texttt{fmincon} after transforming
the hyperparameters to a logarithmic scale. For each problem we defined
a hyperparameter domain, e.g. 
\[
\Theta=\{[\ell,\sigma_{n}^{2},\sigma_{f}^{2}]\,:\,\ell_{0}\leq\ell\leq\ell_{1},\sigma_{n0}^{2}\leq\sigma_{n}^{2}\leq\sigma_{n1}^{2},\sigma_{f1}^{2}\leq\sigma_{f}^{2}\leq\sigma_{f0}^{2}\}
\]
 and we take the initial hyperparameters for the optimization to be
$[\ell_{0},\sigma_{f0}^{2},\sigma_{n0}^{2}]$. Running times were
measured on a machine with two 3.2GHz Intel(R) Xeon(R) Gold 6134 CPUs,
each having 8 cores, and 256GB RAM.

\subsection{Synthetic Data}

In this subsection, we report experiments on synthetically generated
data. The data is generated by noisily sampling a predetermined function,
i.e. samples are generated from the formula 
\[
y_{i}=f^{\star}(x_{i})+\tau_{i}
\]
where $f^{\star}$ is the true function and $\{\tau_{i}\}$ are i.i.d
noise terms, distributed as normal variables with variance $\sigma_{\tau}^{2}=0.5^{2}$
(for 1D) or $\sigma_{\tau}^{2}=0.3^{2}\matI_{2}$ (for 2D). In these
experiments we use the isotropic Gaussian kernel.

First, we consider a one dimensional function:
\begin{equation}
f_{1}^{\star}(x)=\sin(2x)+\sin(6e^{x})\label{eq:wiggly-1D}
\end{equation}
The function was sampled equidistantly on $[-1,1]$ with $n=800$
samples. The results are reported in Figure~\ref{fig:wiggly-1D},
where we show how GLF-GPR with the number of quadrature points $s$
compared to RFF-GPR.

\begin{figure}[H]
\centering{}%
\begin{tabular}{cc}
\includegraphics[width=0.45\textwidth]{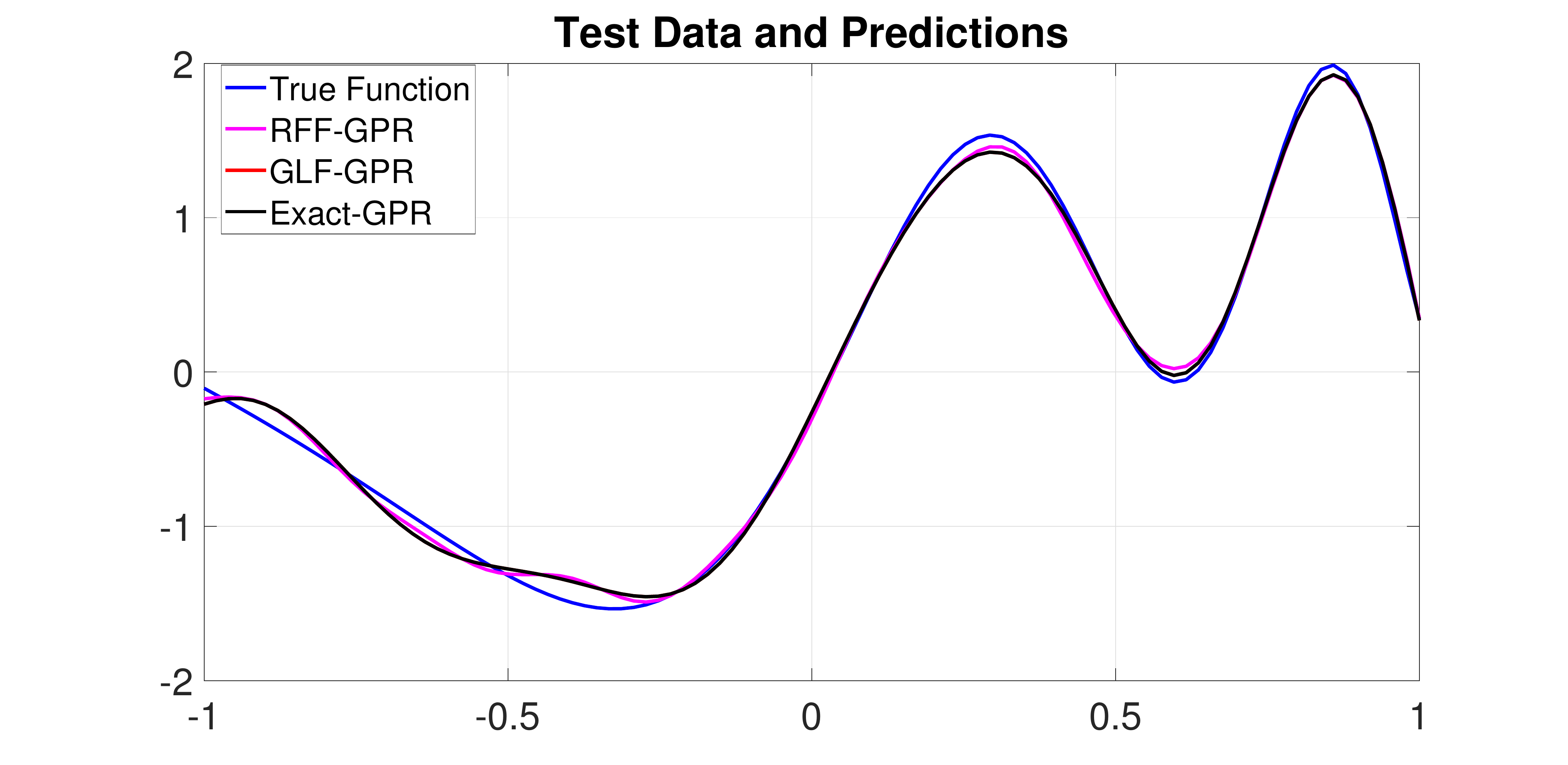} & \includegraphics[width=0.45\textwidth]{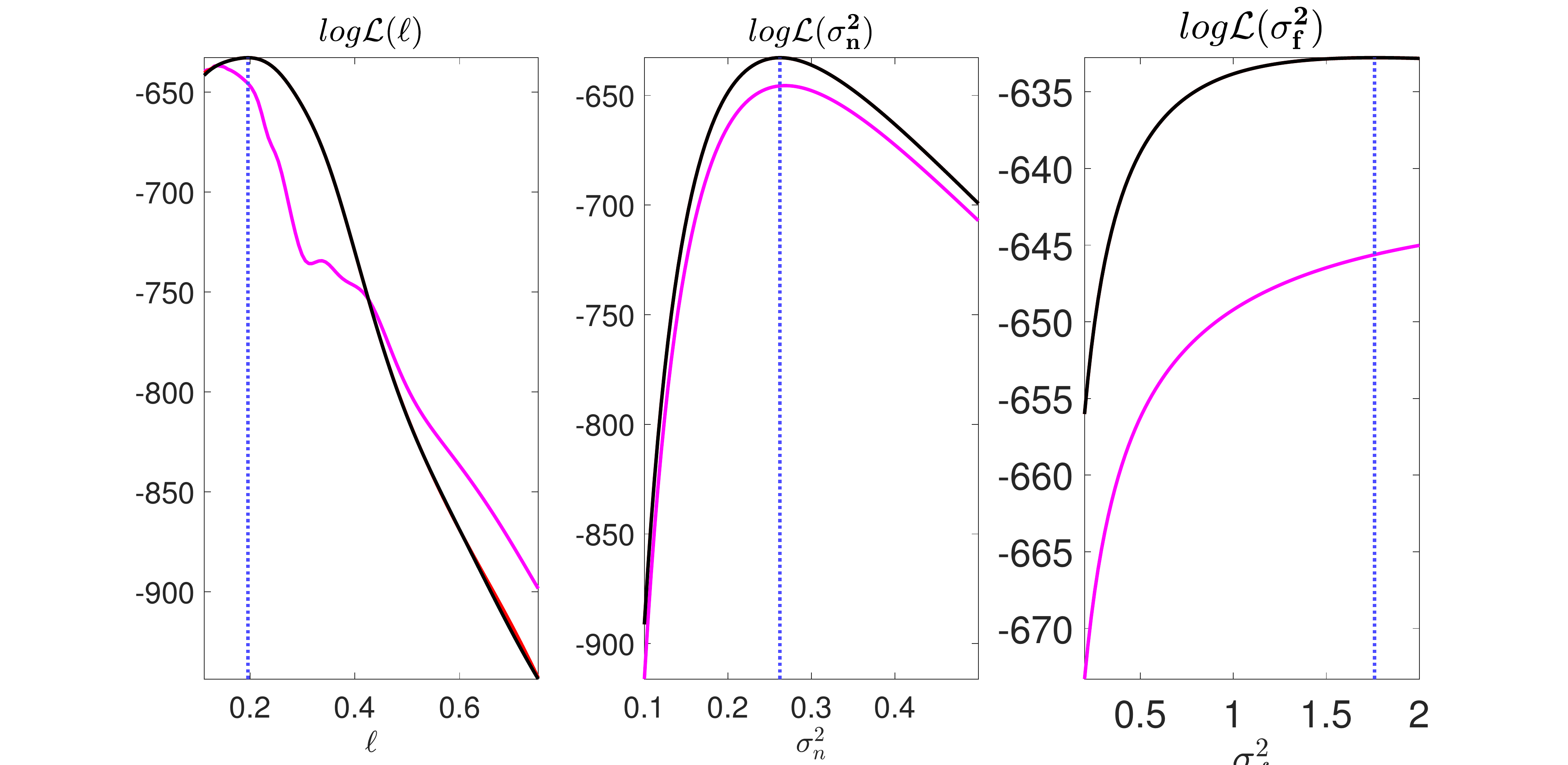}\tabularnewline
\includegraphics[width=0.45\textwidth]{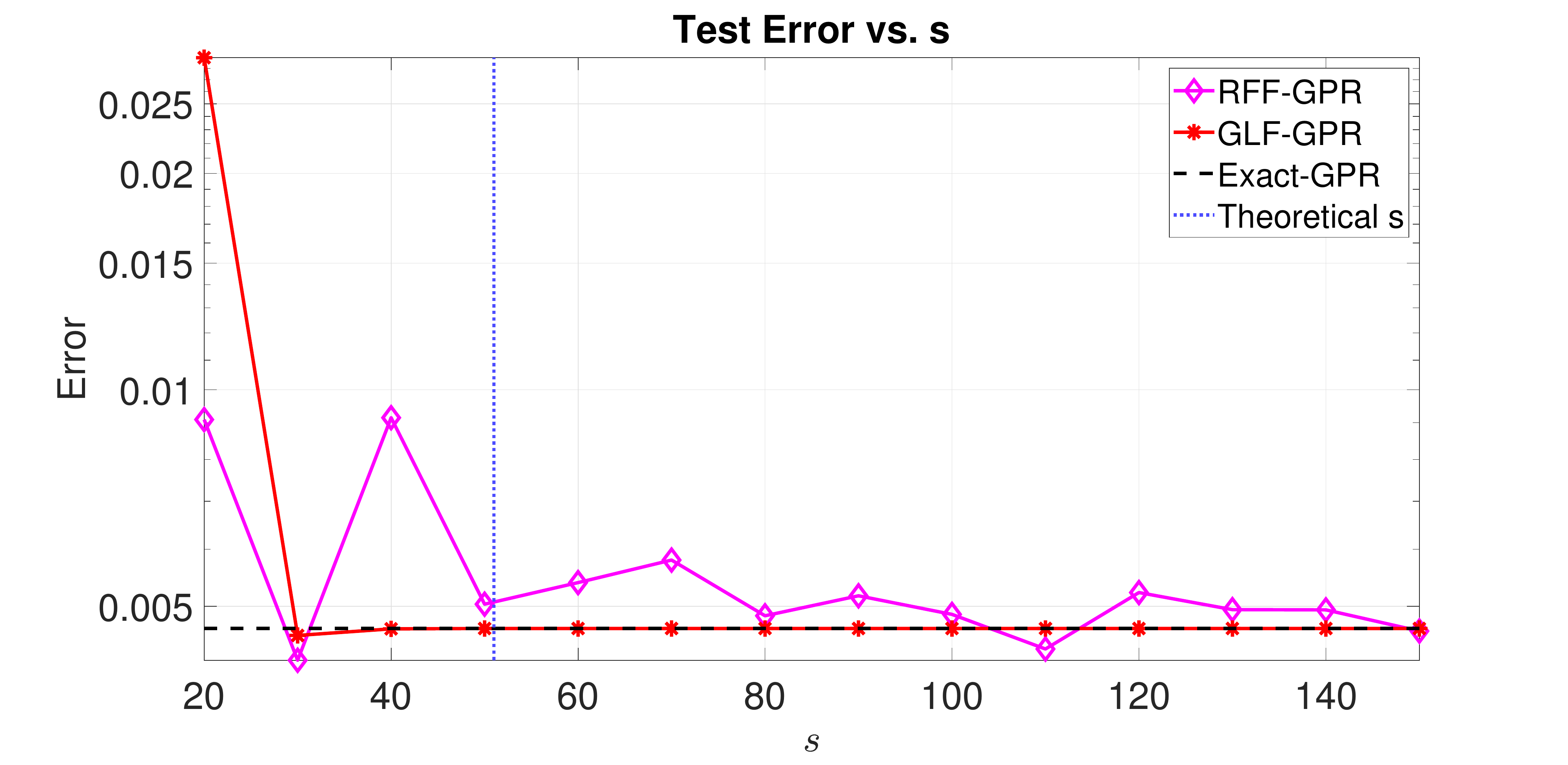} & \includegraphics[width=0.45\textwidth]{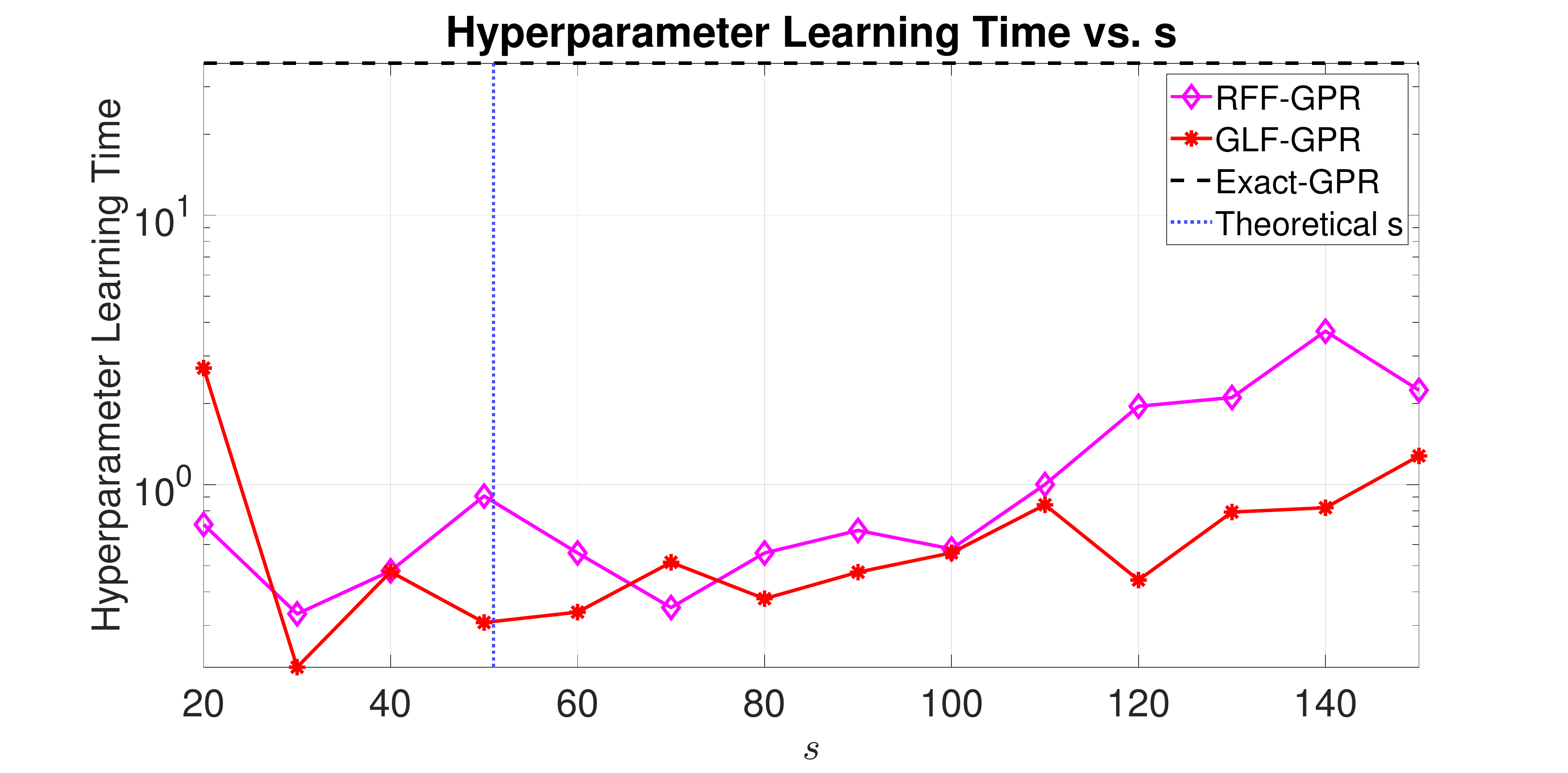}\tabularnewline
\end{tabular}\caption{\label{fig:wiggly-1D}Result for data generated using the function
$f_{1}^{\star}$. Top-left: true function, input data, and prediction.
Top-right: log-likelihood for various sections of the hyperparameter
range. Bottom-Left: Test error as a function of the number of features.
Bottom-Right: hyperparameter learning time.}
\end{figure}

In the top-right graph, we see that the log-likelihood of GLF-GPR
merges with the log-likelihood of Exact-GPR for each of the hyperparameters,
where the optimal hyperparameters are dashed in blue. RFF-GPR deviates
considerably. This graph exemplifies that GLF-GPR can yield good approximation
to the exact log-likelihoods, while RFF-GPR yields a poor approximation.
We also see that GLF-GPR optimizes hyperparameters that are much closer
to the exact values than RFF-GPR. The bottom-left plot shows the MSE
error on the same test points. We see that the GLF-GPR error stabilizes
on error of Exact-GPR even before the theoretical value of $s$. The
bottom-right graph shows the runtime of the hyperparameter learning
phase for different values of quadrature points $s$. GLF-GPR is clearly
more efficient than Exact-GPR and mostly more efficient than RFF-GPR.
As expected, as $s$ becomes larger, GLF-GPR learn the hyperparameters
much faster than Exact-GPR and RFF-GPR. Furthermore, GLF-GPR achieves
a low error rate with less features than RFF-GPR, and thus is able
to do training, prediction and hyperparameter learning much faster
than RFF-GPR.

Next, we consider a two dimensional function:
\begin{equation}
f_{2}^{\star}(x_{1},x_{2})=(\sin(x_{1})+\sin(10e^{x_{1}}))(\sin(x_{2})+\sin(10e^{x_{2}}))\label{eq:wiggly-2D}
\end{equation}
The function was sampled on an uniform grid on $[-1,1]\times[-1,1]$
with $n=4096$ samples. We consider $\ell_{1}=\ell_{2}$ so $U_{1}=U_{2}$
and $s_{1}=s_{2}$, i.e., $s=s_{1}^{2}$. The results are reported
in Figure~\ref{fig:wiggly-1D}.

\begin{figure}[H]
\centering{}%
\begin{tabular}{cc}
\includegraphics[width=0.45\textwidth]{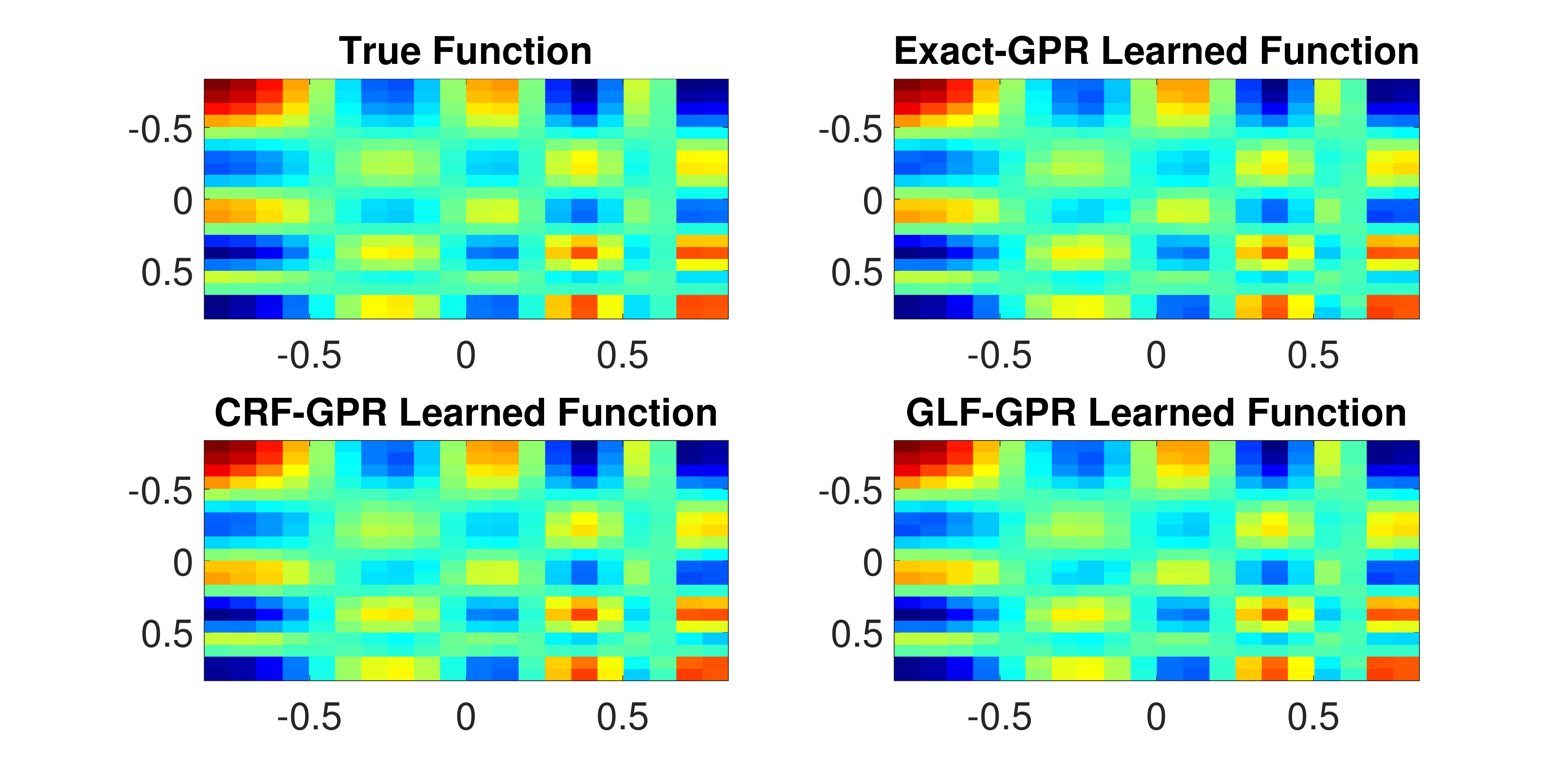} & \includegraphics[width=0.45\textwidth]{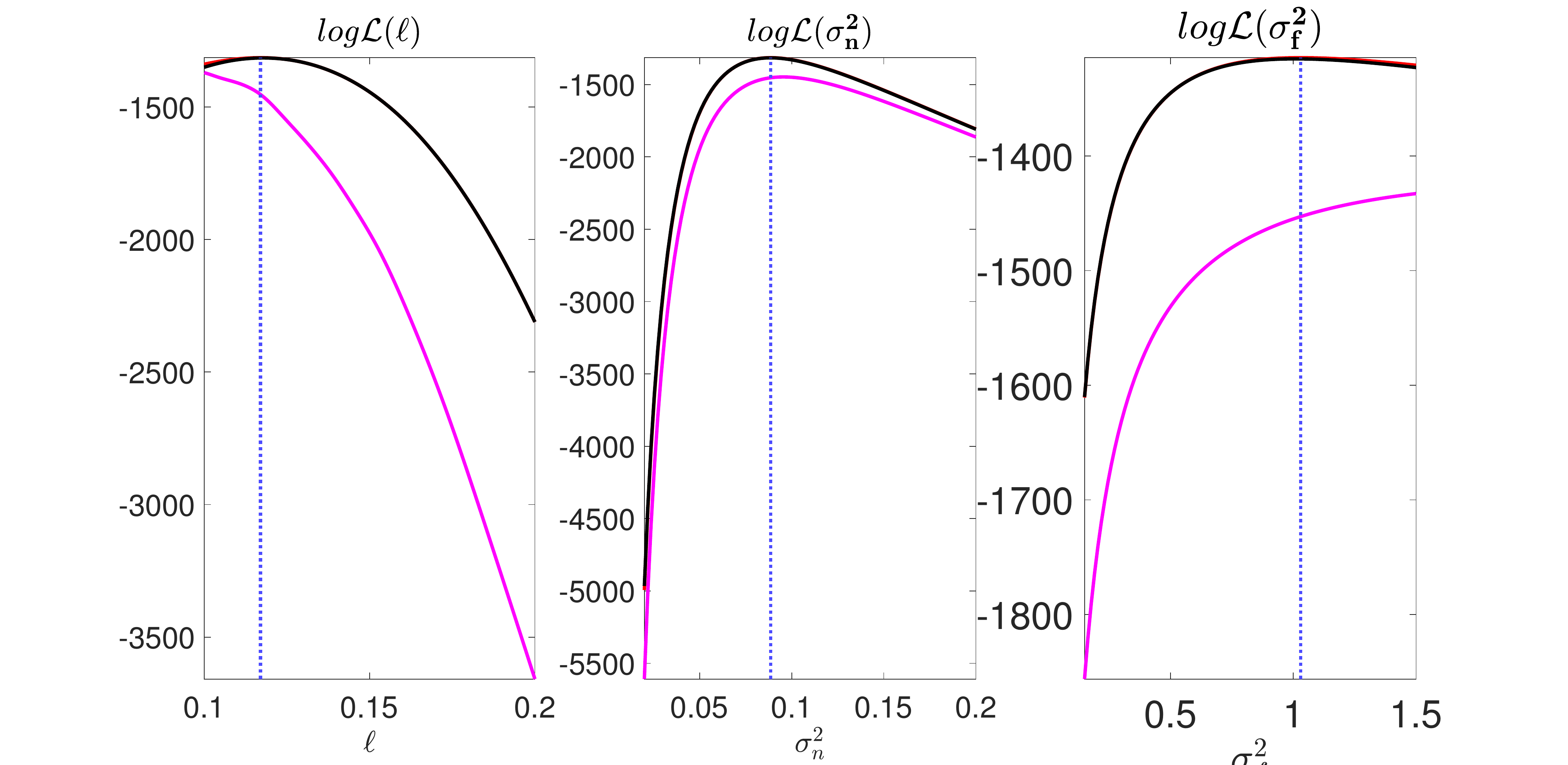}\tabularnewline
\includegraphics[width=0.45\textwidth]{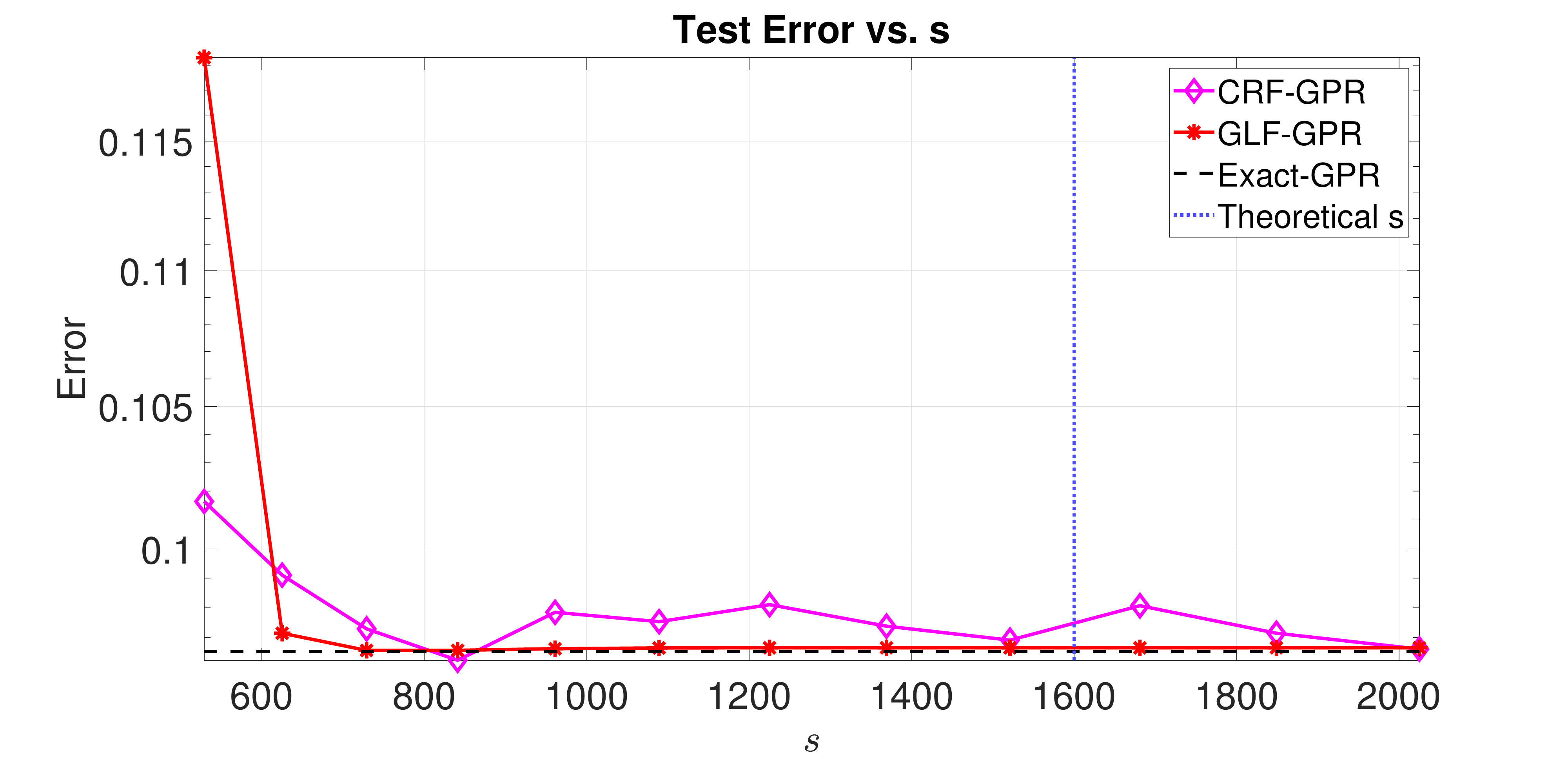} & \includegraphics[width=0.45\textwidth]{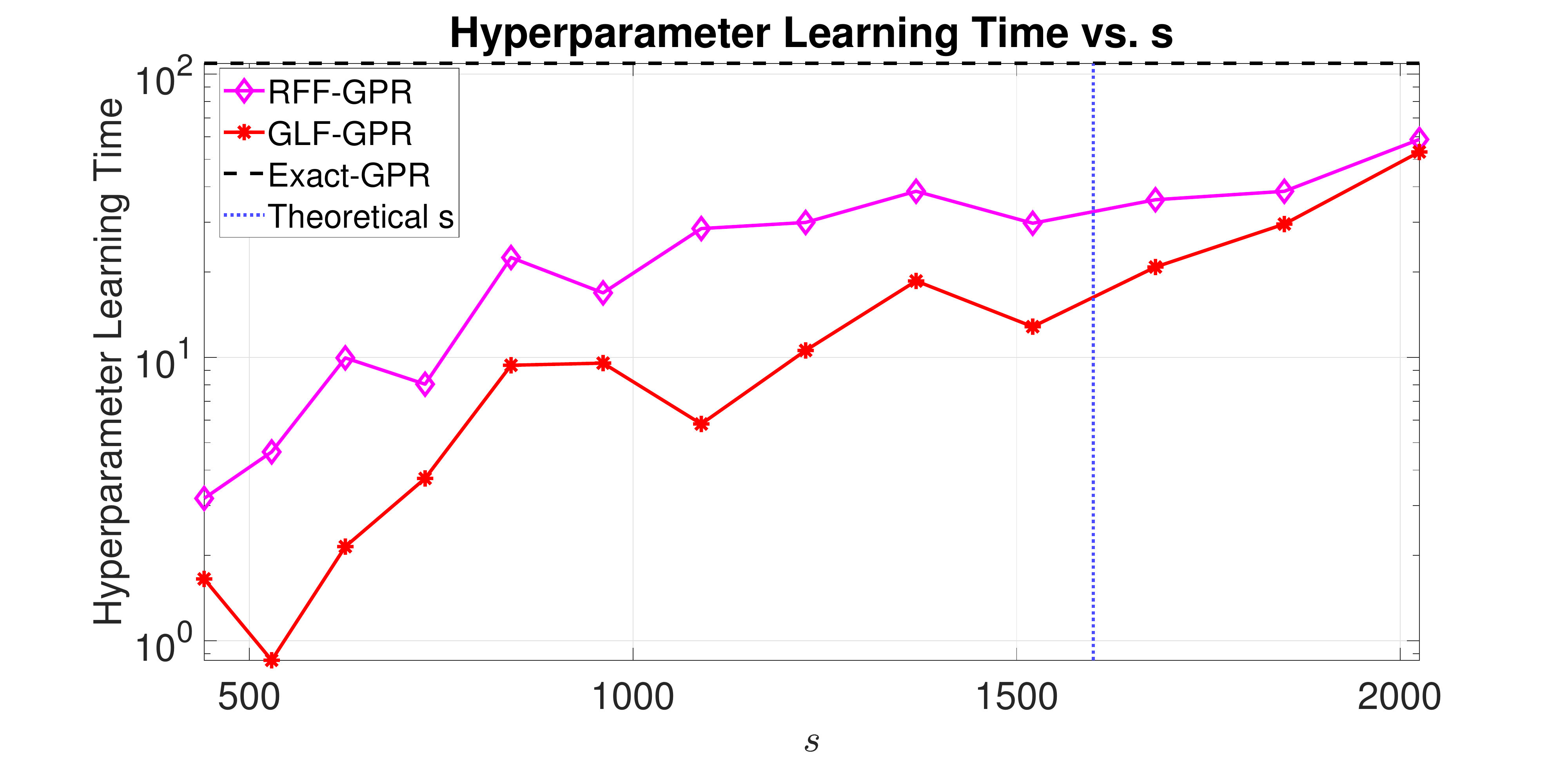}\tabularnewline
\end{tabular}\caption{\label{fig:wiggly-2D}Result for data generated using the function~$f_{2}^{\star}$.
Top-left:true function, input data, and prediction. Top-right: log-likelihood
for various sections of the hyperparameter range. Bottom-Left: Test
error as a function of the number of features. Bottom-Right: hyperparameter
learning time.}
\end{figure}

Similar to the synthetic 1D experiment, in the top-right plot we see
that GLF-GPR yields a good approximation to the exact log-likelihood.
In the bottom-left plot we see that shows the GLF-GPR error stabilizes
on error of Exact-GPR at a much smaller number of quadrature points
than the theoretical $s$.

\subsection{Natural Sound Modeling}

Next we consider the natural sound benchmark used in \cite{wilson2015kernel}
(without hyperparameter learning) and \cite{dong2017scalable} (with
hyperparameter learning). The data is shown in the top-left graph
of Figure~\ref{fig:sound-results}. The goal is to recover contiguous
missing regions in a waveform with $n=59309$ training points. The
test consists of 691 samples. The Gaussian kernel is used for learning.

\begin{figure}[H]
\begin{centering}
\begin{tabular}{cc}
\includegraphics[width=0.45\textwidth]{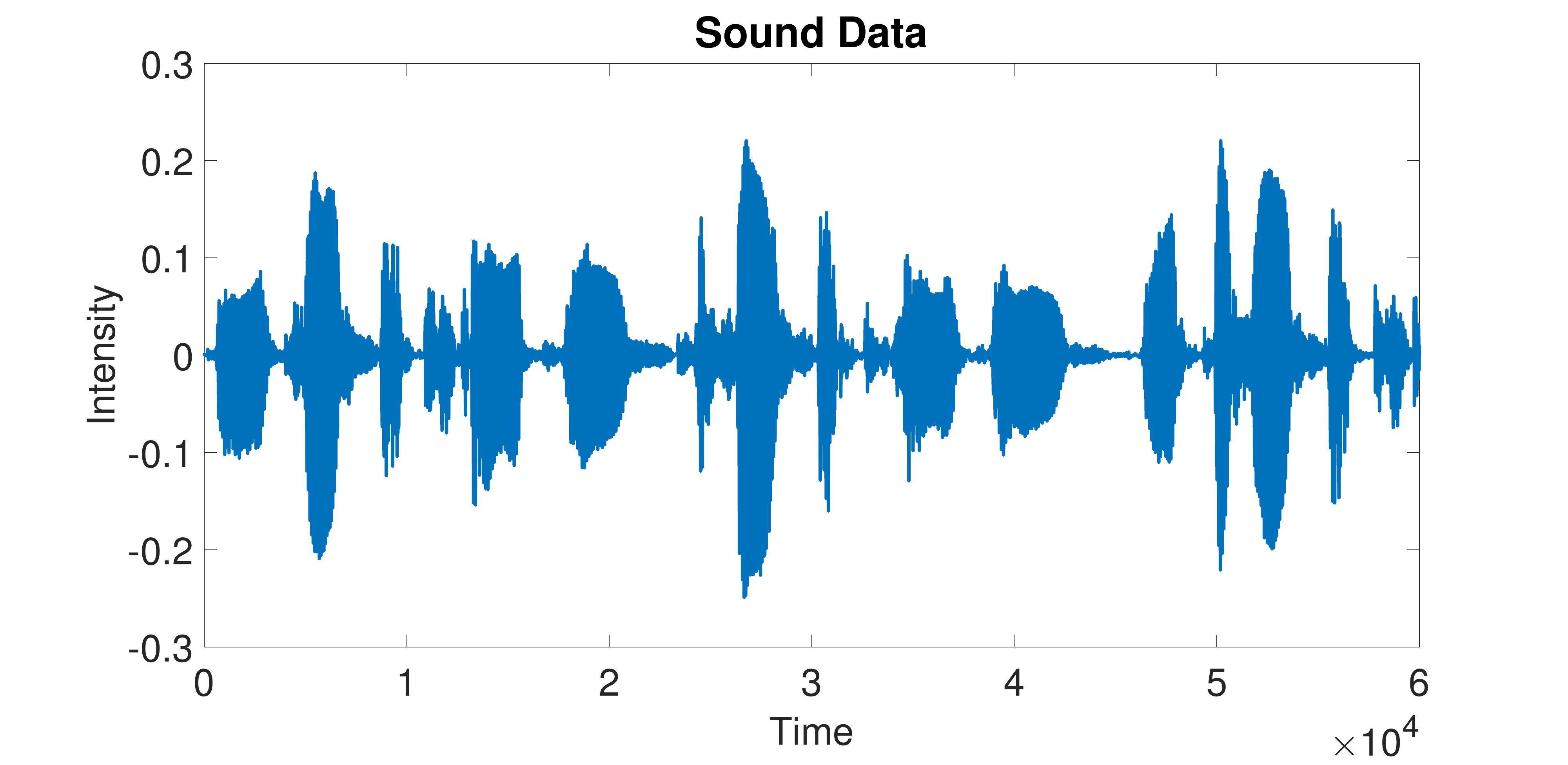} & \includegraphics[width=0.45\textwidth]{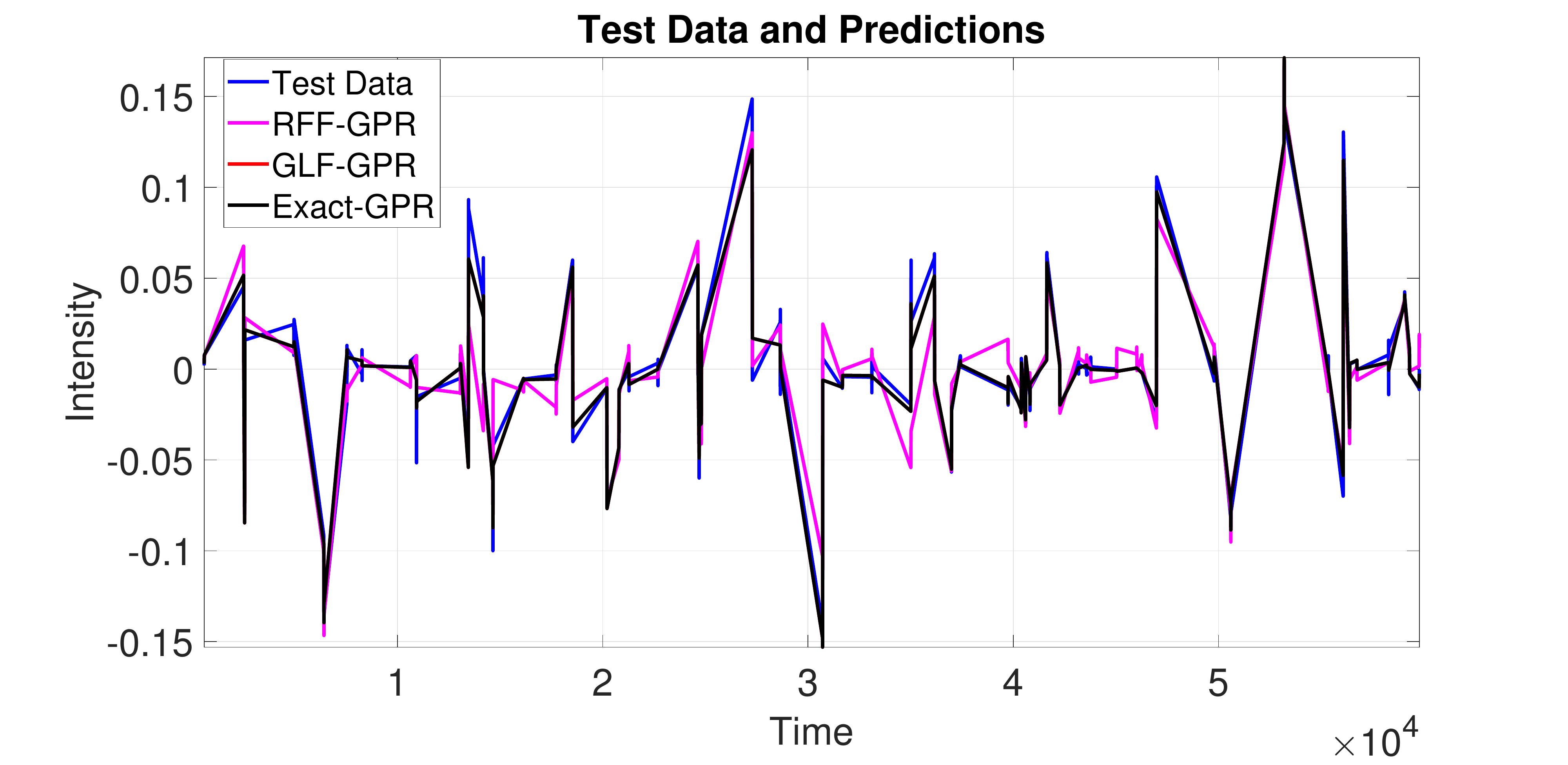}\tabularnewline
\includegraphics[width=0.45\textwidth]{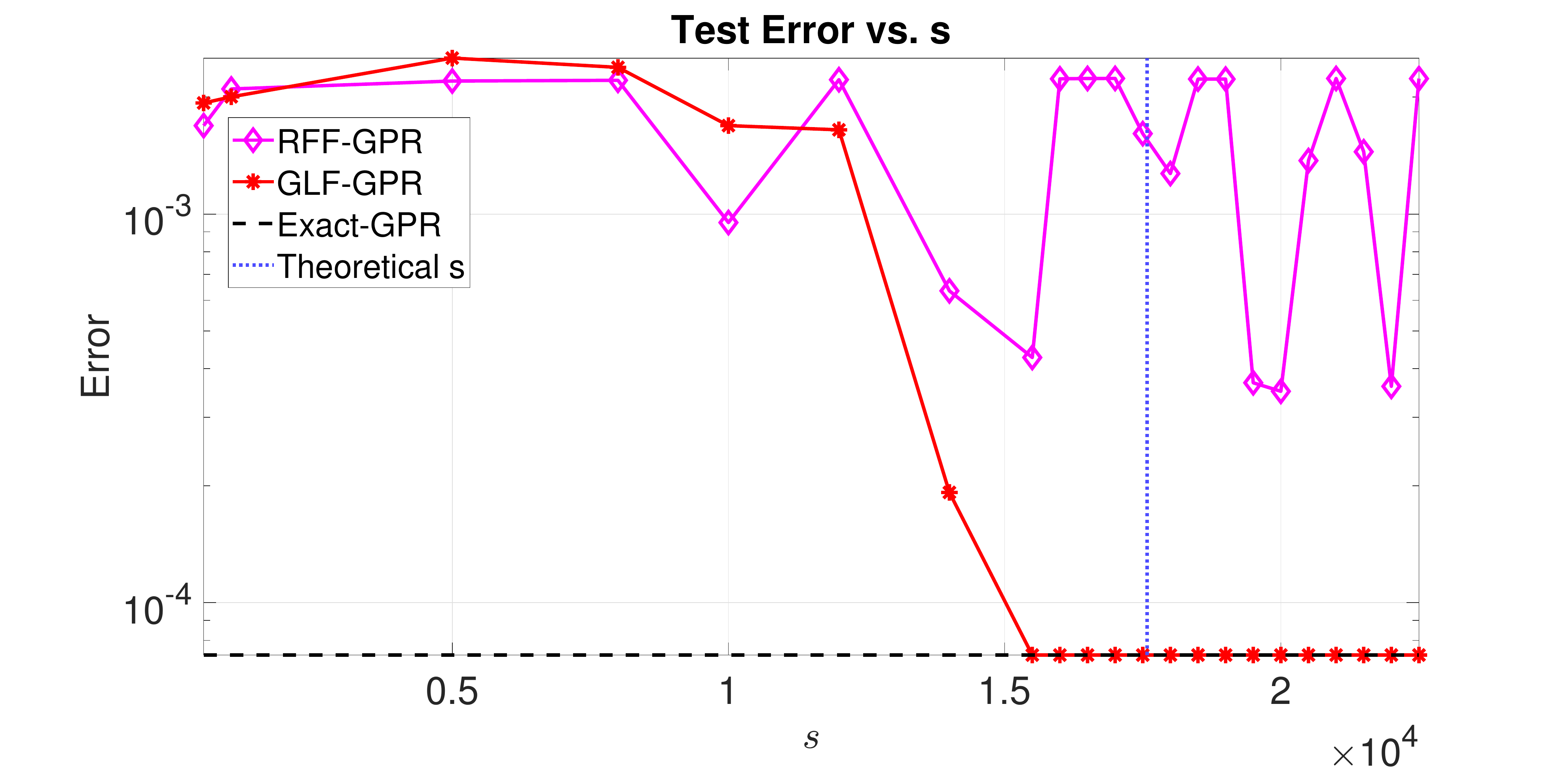} & \includegraphics[width=0.45\textwidth]{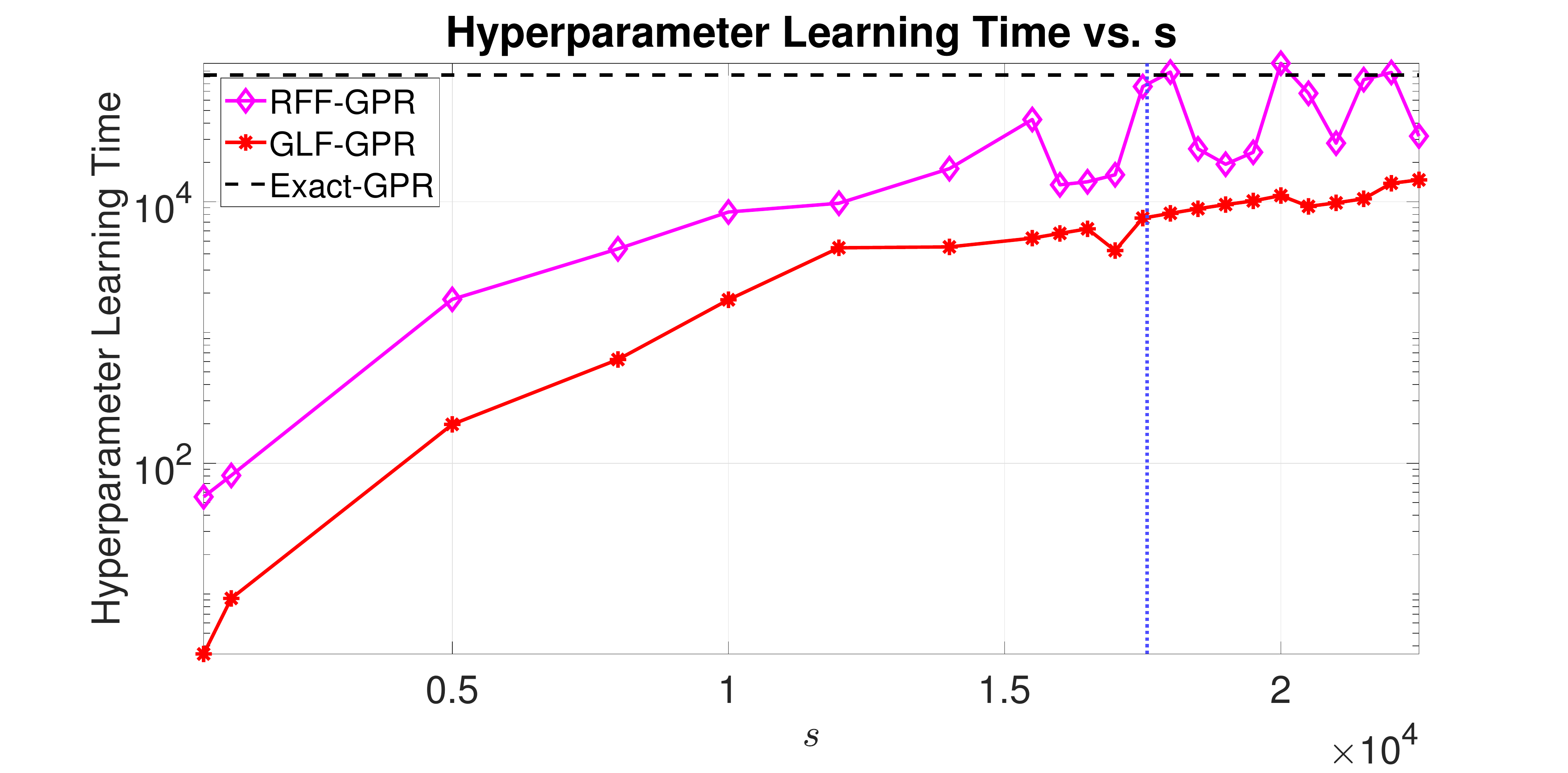}\tabularnewline
\end{tabular}
\par\end{centering}
\caption{\label{fig:sound-results}Results for the natural sound data. Top-left:
full dataset (training and test). Top-right: test data and predictions,
for the best smallest $s$ we tested. Bottom-left: MSE of the test
data for various sizes of $s$. The dashed vertical line is the value
of the theoretical minimum $s$ needed for spectral equivalence. Bottom-right:
running time for various sizes of $s$.}
\end{figure}

Results are reported in Figure~\ref{fig:sound-results}. In the bottom-left
graph we plot the test error as a function of the number of features.
Initially GLF-GPR produces poor results, but when $s$ is large enough,
the results are similar to the Exact-GPR (see also the top-right plot).
We see that even when the number of quadrature points is smaller than
the theoretical value required for spectral equivalence, GLF-GPR's
error is stabilizes on the Exact-GPR error. In contrast, RFF-GPR's
error oscillates above Exact-GPR's error. In the bottom-right graph
we see that the runtime of the hyperparameters learning phase is significantly
smaller for GLF-GPR.

\subsection{Google Daily High Stock Price}

We consider a time series data of the daily high stock price of Google
spanning 3797 days from 19th August 2004 to 19th September 2019. We
set the data as $x\in\{1,\dots,3797\}$ and $y=\log(Stock_{high})$.
The test is of size of 12\% of the data, i.e., consists of 502 days.
We use the Matèrn kernel with $\nu=5/2$.

We note that the theoretical number of quadrature features $s$ required
for spectral equivalence is bigger than the number of training points.
Possible reasons are: the hyperparameter $\sigma_{n}^{2}$ in these
dataset is very small and that increases our bound in Eq.~(\ref{eq:uni_bound_f}).
This increases $U$ which increases $s$. In addition, the weight
function of the Matèrn kernel has singularity point which leads to
the ellipse parameter to be pretty small. However in practice we see
that the approximation convergences around the value $s=1550$.

\begin{figure}[H]
\begin{centering}
\begin{tabular}{cc}
\includegraphics[width=0.45\textwidth]{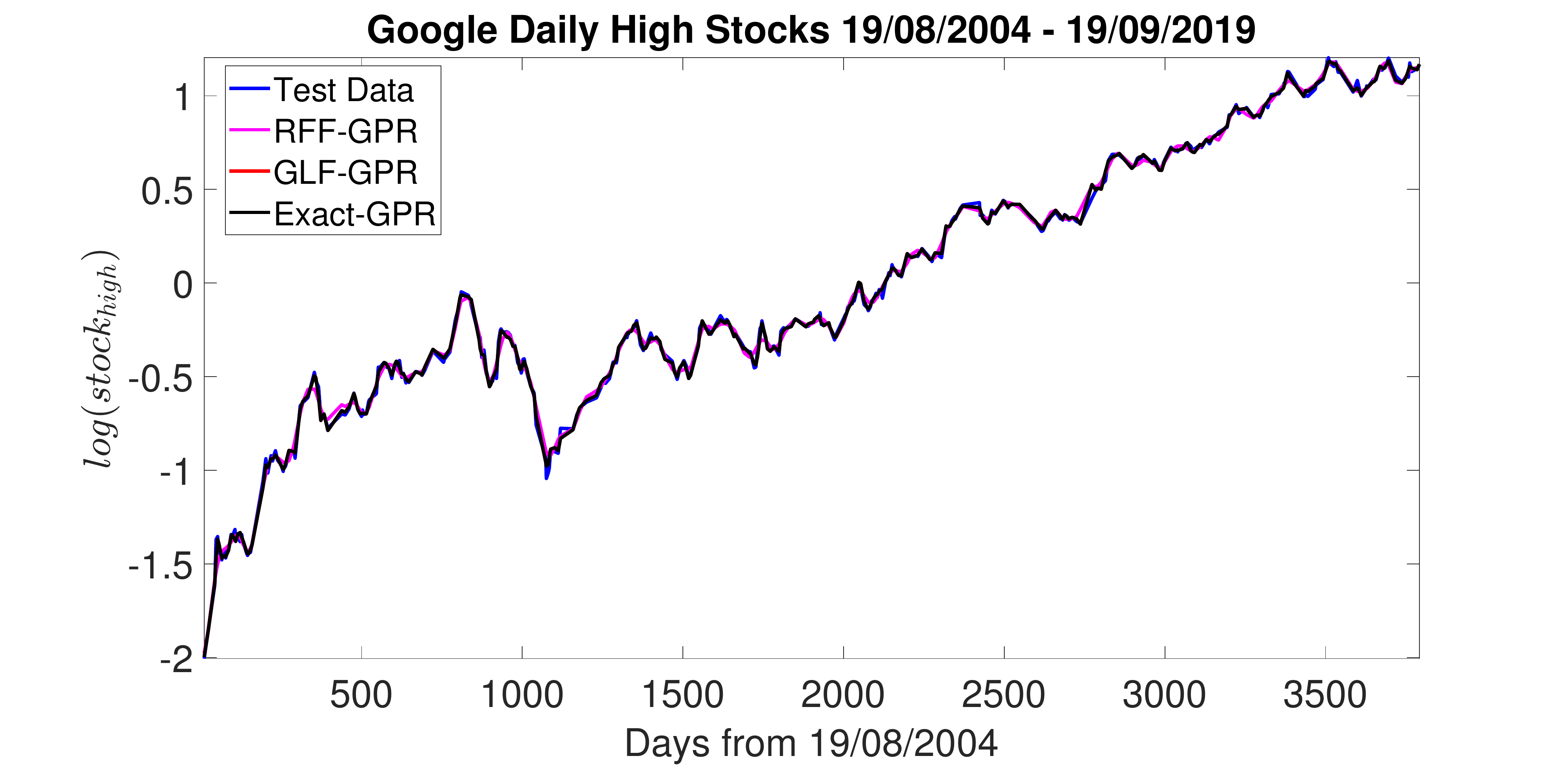} & \includegraphics[width=0.45\textwidth]{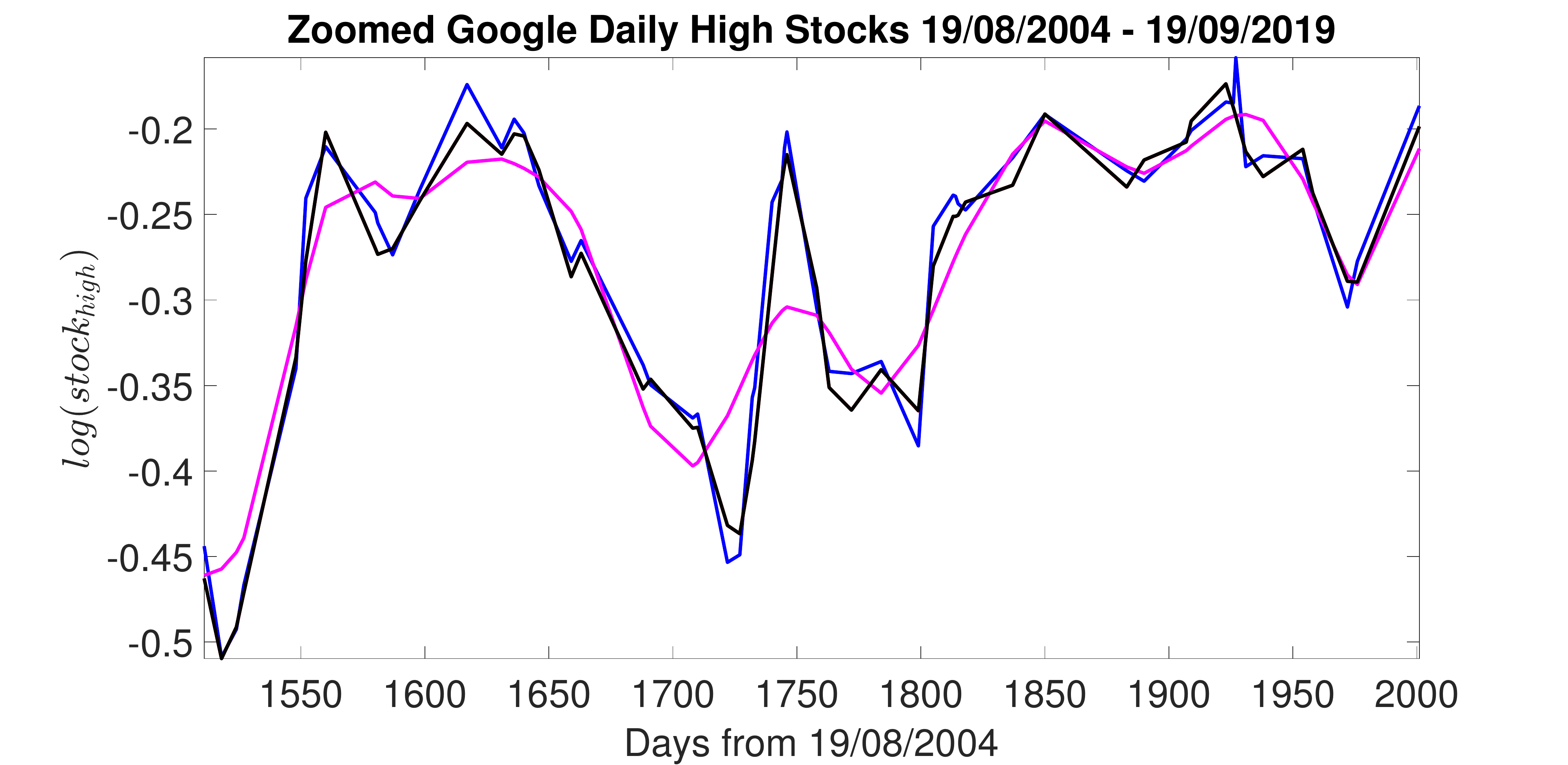}\tabularnewline
\includegraphics[width=0.45\textwidth]{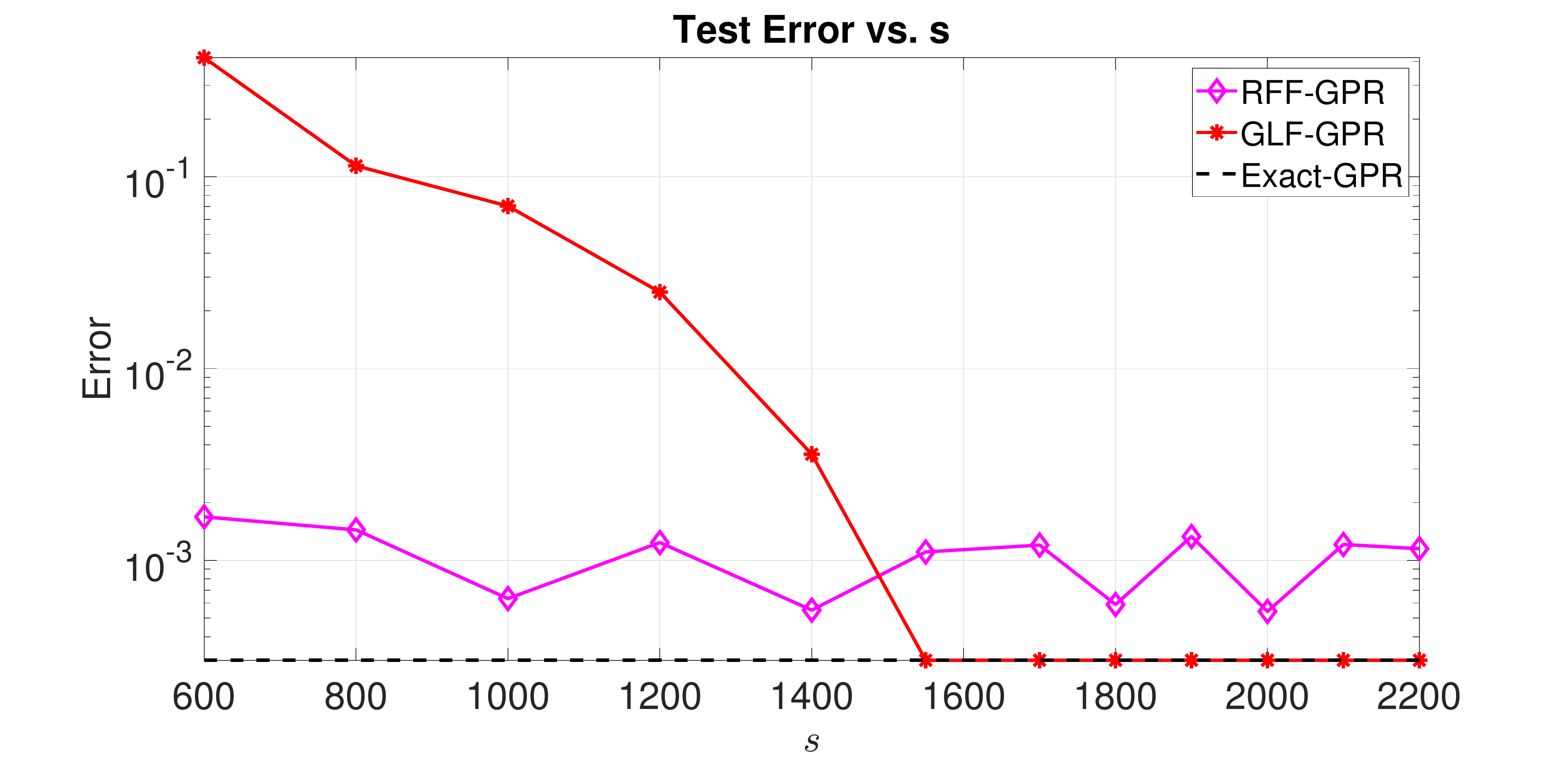} & \includegraphics[width=0.45\textwidth]{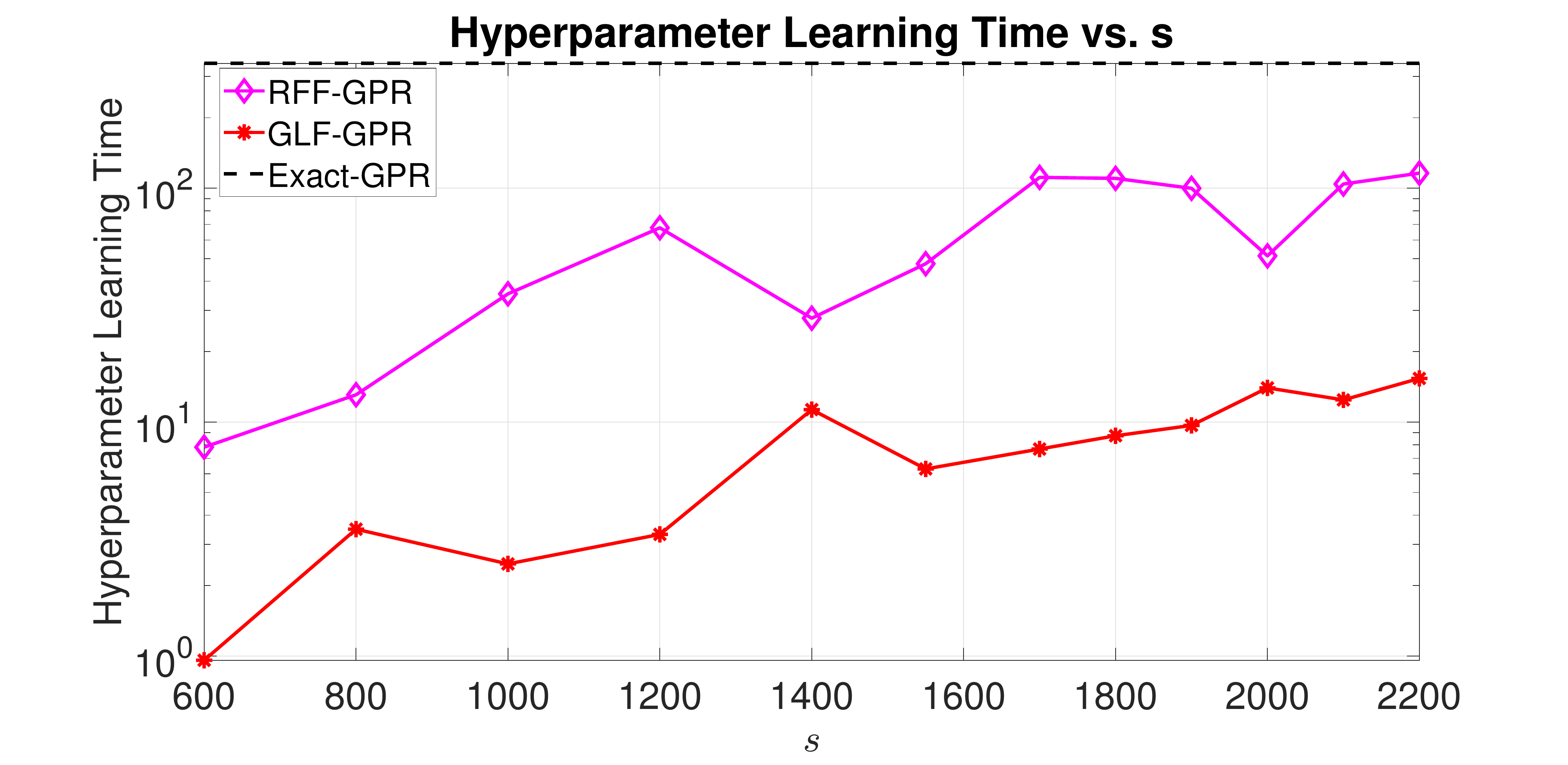}\tabularnewline
\end{tabular}
\par\end{centering}
\caption{\label{fig:google-results} Results for the google stocks data. Top-left:
test data and predictions. Top-right: zoomed test data and predictions
for the best smallest $s$ we tested. Bottom-left: MSE of the test
data for various sizes of $s$. Bottom-right: running time for various
sizes of $s$.}
\end{figure}
Results are reported in Figure~\ref{fig:google-results}. From the
top-right and top-left plots we see that RFF-GPR is producing a poor
approximation while GLF-GPR approximation merges with Exact-GPR approximation.
Also, from the bottom-left plot we see that initially GLF-GPR produces
poor errors, but when $s$ is large enough, the error stabilizes on
the Exact-GPR error (see also the top-right plot).

\subsection{Spatial Temperature Anomaly for East Africa in 2016}

Similar to \cite{ton2018spatial}, we consider MOD11A2 Land Surface
Temperature (LST) 8-day composite 2D data of synoptic yearly mean
for 2016 in the East Africa region. For the training set, we randomly
sample 77404 LST locations and set $\x\in\{\left(Longitude,\,Latitude\right)\}$
and $y=\{temperature\}$.We examine the MSE errors on the remaining
6005 locations, but use all 83409 data points to draw maps. We also
use the anisotropic Matèrn kernel with $\nu=1$. Again, theoretical
number of quadrature features for spectral equivalence is bigger than
then number of training points. However again in practice we see that
the approximation convergences with less features. Due to memory and
time constraints, we were unable to use Exact-GPR, and RFF-GPR results
are presented up to the computer's memory capacity.

\begin{figure}[H]
\begin{centering}
\begin{tabular}{c}
\includegraphics[width=0.9\textwidth]{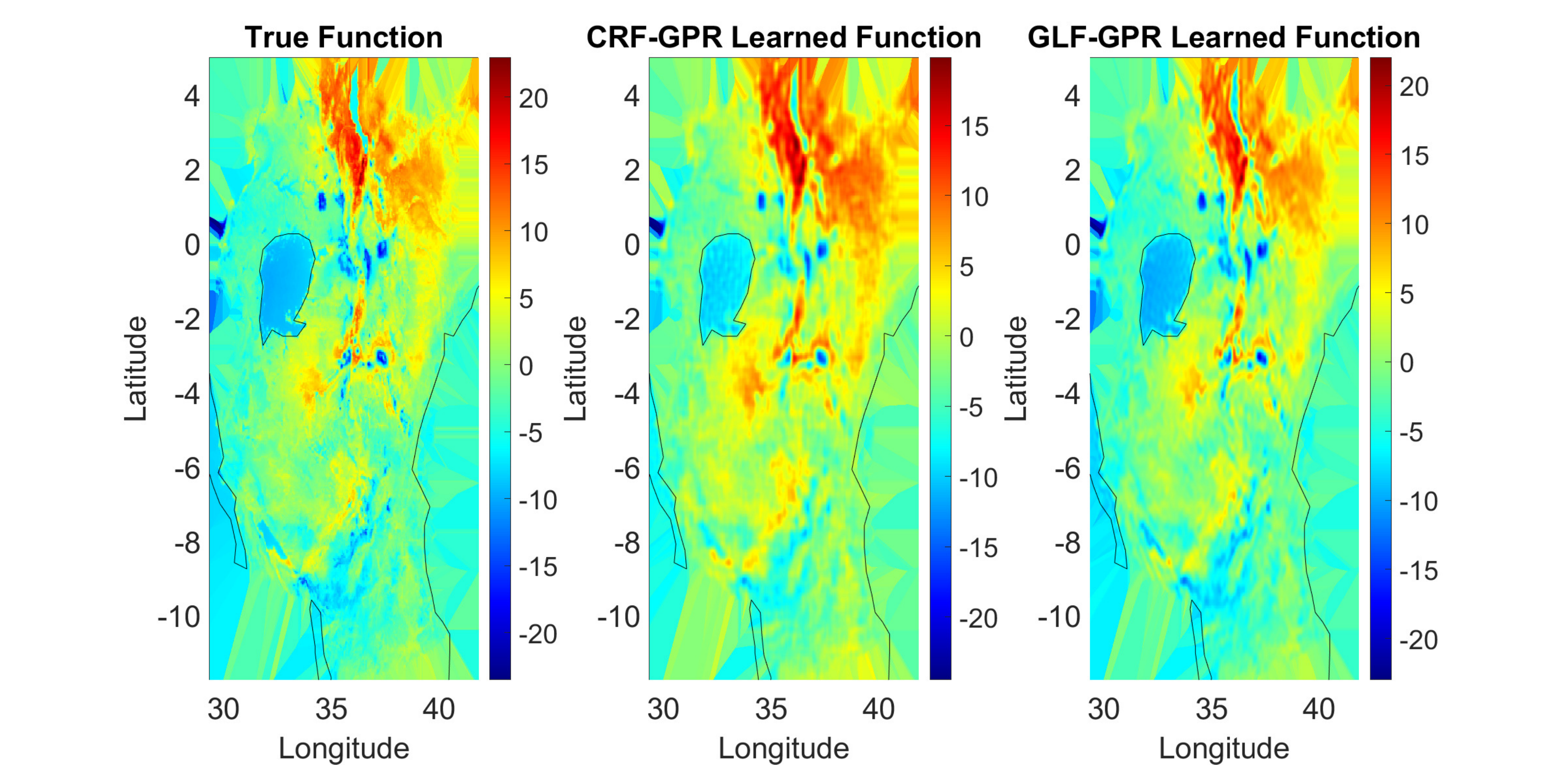}\tabularnewline
\end{tabular}
\par\end{centering}
~
\begin{centering}
\begin{tabular}{cc}
\includegraphics[width=0.45\textwidth]{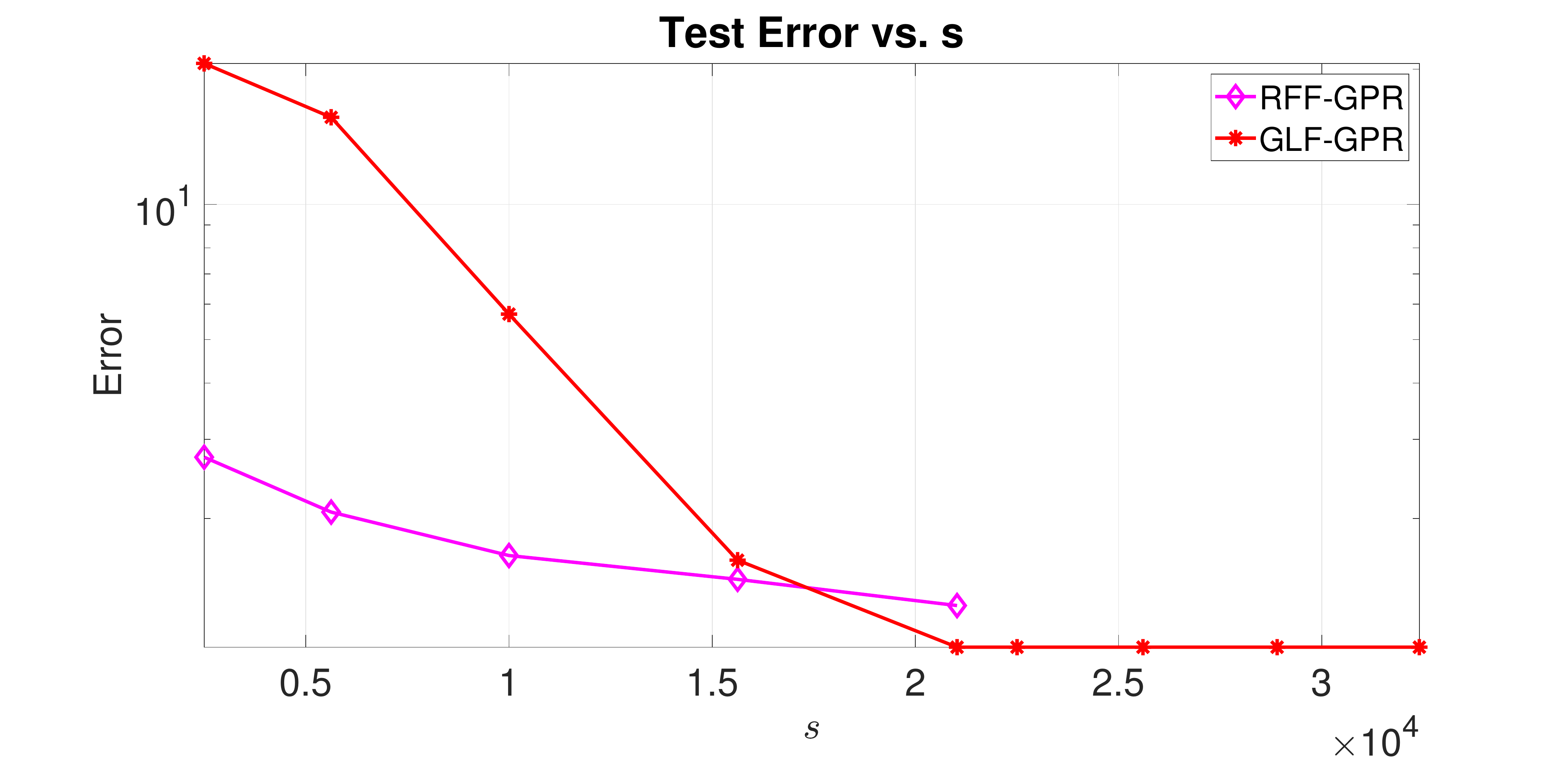} & \includegraphics[width=0.45\textwidth]{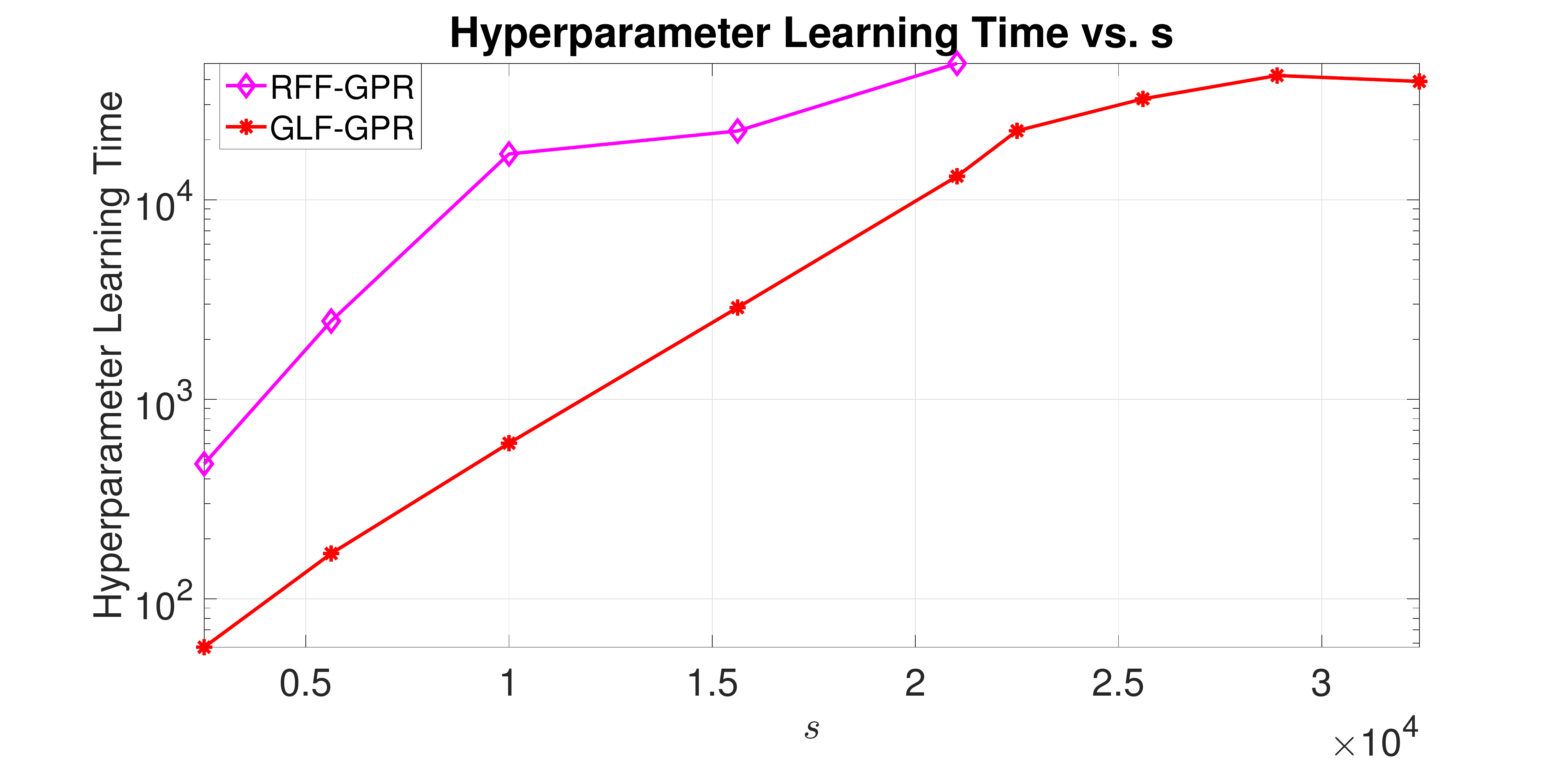}\tabularnewline
\end{tabular}
\par\end{centering}
\caption{\label{fig:Africa-results} Results for the east Africa data. Top:
all true data, and predictions. Bottom-left: MSE of the test data
for various sizes of $s$. Bottom-right: running time for various
sizes of $s$.}
\end{figure}
Results are reported in Figure~\ref{fig:Africa-results}. In the
top plot we see that GLF-GPR approximates the true function well,
unlike RFF-GPR. Also, from the bottom-left plot we see that around
$s=21025$, the GLF-GPR error stabilizes while RFF-GPR error is still
suboptimal.

\section{Conclusions and Future Work}

In this paper, we proposed the use of Gauss-Legendre feature for large-scale
Gaussian process regression. Our method is very much inspired by Random
Fourier Features~\cite{rahimi2008random}. However, our method replaces
Monte-Carlo integration in RFF with a Gauss-Legendre quadrature of
a truncated integral representation of the kernel function. With Gauss-Legendre
quadrature our method is able to build spectrally equivalent kernel
approximation with an amount of features which is asymptotically poly-logarithmic
in the training size. In contrast, with RFF the number of features
for spectral equivalence must be at least linear. Sublinear amount
of features can also be obtained using a Gaussian quadrature (suggested
in the context of kernel learning in~\cite{dao2017gaussian}). However,
this is problematic in the context of hyperparameter learning (see
Section~\ref{subsec:rff-problematic}). RFF has a similar issue.
In contrast, the use of Gauss-Legendre quadrature allows our method
to keep the quadrature nodes and weights fixed, leading to simplified
structural dependence of the kernel matrix on the hyperparameters
which is more amenable to hyperparameter learning. Finally, we demonstrate
the utility of our method on several real-world low-dimensional datasets.

We mention a few possible directions for future research:
\begin{itemize}
\item Asymptotically, our method requires a number of features that is poly-logarithmic
in the training size. Yet, for some moderately sized datasets our
theoretical results required a number of features larger than the
number of training points. However, in practice the number of features
required for high quality results was much smaller than the bound.
Closing this gap is an open problem.
\item Our method is able to handle a rich family of kernels (see Eq.~(\ref{eq:kernel-form}))
which includes stationary kernels and some non-stationary kernels
(see Section~\ref{subsec:semigroup}). Extending our method to arbitrary
non stationary kernels is an open problem.
\item The number of features needed by our method is exponential in the
dimension, i.e. we have not escaped from the curse of dimensionality.
A future research direction is to replace the tensorized multivariate
quadrature with sparse grids, and in doing so avoid the exponential
dependence on $d$.
\end{itemize}

\subsection*{Acknowledgements}

This research was supported by BSF grant 2017698.

\bibliographystyle{plain}
\bibliography{gp_bib}

\appendix

\section{\label{sec:Accelerating-Hyperparametes}Further Details on Hyperparameters
Learning}

\subsection{Derivation of Eq.~(\ref{eq:lik_trace})}

Recall that,
\[
\mat F(\vtheta)=\sigma_{f}^{2}(\sigma_{f}^{2}\mat{\mat Z}^{*}\mat{\mat Z}+\sigma_{n}^{2}\mat W(\vtheta)^{-1})^{-1}\mat{\mat Z}^{*}\mat{\mat Z}\,.
\]
First, from the Woodbury formula we obtain that
\begin{eqnarray*}
\Trace{\tilde{\matK}_{\vtheta}(\matX,\matX)^{-1}} & = & \Trace{\left(\sigma_{f}^{2}\mat{\mat Z}\mat W(\vtheta)\mat{\mat Z}^{*}+\sigma_{n}^{2}\mat I_{n}\right)^{-1}}\\
 & = & \sigma_{n}^{-2}\Trace{\mat I_{n}-\sigma_{f}^{2}\mat{\mat Z}\left(\sigma_{f}^{2}\mat{\mat Z}^{*}\mat{\mat Z}+\sigma_{n}^{2}\mat W(\vtheta)^{-1}\right)^{-1}\mat{\mat Z}^{*}}\\
 & = & \sigma_{n}^{-2}n-\sigma_{n}^{-2}\Trace{\sigma_{f}^{2}\mat{\mat Z}\left(\sigma_{f}^{2}\mat{\mat Z}^{*}\mat{\mat Z}+\sigma_{n}^{2}\mat W(\vtheta)^{-1}\right)^{-1}\mat{\mat Z}^{*}}\\
 & = & \sigma_{n}^{-2}n-\sigma_{n}^{-2}\Trace{\sigma_{f}^{2}\left(\sigma_{f}^{2}\mat{\mat Z}^{*}\mat{\mat Z}+\sigma_{n}^{2}\mat W(\vtheta)^{-1}\right)^{-1}\mat{\mat Z}^{*}\mat{\mat Z}}\\
 & = & \sigma_{n}^{-2}\left(n-\Trace{\mat F(\vtheta)}\right)\,.
\end{eqnarray*}
Now, for the first term in Eq.~(\ref{eq:lik_trace}) we have 
\begin{eqnarray*}
\Trace{\tilde{\matK}_{\vtheta}(\matX,\matX)^{-1}\frac{\partial\tilde{\matK}_{\vtheta}(\matX,\matX)}{\partial\sigma_{f}^{2}}} & = & \Trace{\tilde{\matK}_{\vtheta}(\matX,\matX)^{-1}\mat Z\mat W(\vtheta)\mat Z^{*}}\\
 & = & \Trace{\tilde{\matK}_{\vtheta}(\matX,\matX)^{-1}\sigma_{f}^{-2}\left(\sigma_{f}^{2}\mat Z\mat W(\vtheta)\mat Z^{*}+\sigma_{n}^{2}\mat I_{n}-\sigma_{n}^{2}\mat I_{n}\right)}\\
 & = & \sigma_{f}^{-2}\Trace{\mat I_{n}-\sigma_{n}^{2}\tilde{\matK}_{\vtheta}(\matX,\matX)^{-1}}\\
 & = & \sigma_{f}^{-2}n-\sigma_{f}^{-2}\sigma_{n}^{2}\Trace{\tilde{\matK}_{\vtheta}(\matX,\matX)^{-1}}\\
 & = & \sigma_{f}^{-2}n-\sigma_{f}^{-2}\sigma_{n}^{2}\left(\sigma_{n}^{-2}n-\sigma_{n}^{-2}\Trace{\mat F(\vtheta)}\right)\\
 & = & \sigma_{f}^{-2}\Trace{\mat F(\vtheta)}\,.
\end{eqnarray*}
For the next term in Eq.~(\ref{eq:lik_trace}) we have
\begin{eqnarray*}
\Trace{\tilde{\matK}_{\vtheta}(\matX,\matX)^{-1}\frac{\partial\tilde{\matK}_{\vtheta}(\matX,\matX)}{\partial\sigma_{n}^{2}}} & = & \Trace{\tilde{\matK}_{\vtheta}(\matX,\matX)^{-1}}\\
 & = & \sigma_{n}^{-2}(n-\Trace{\mat F(\vtheta)})
\end{eqnarray*}
Finally, for the third term in Eq.~(\ref{eq:lik_trace}) we have
\begin{eqnarray}
\Trace{\tilde{\matK}_{\vtheta}(\matX,\matX)^{-1}\frac{\partial\tilde{\matK}_{\vtheta}(\matX,\matX)}{\partial\theta_{i}}} & = & \Trace{\left(\sigma_{f}^{2}\mat Z\mat W(\vtheta)\mat Z^{*}+\sigma_{n}^{2}\mat I_{n}\right)^{-1}\sigma_{f}^{2}\mat{\mat Z}\frac{\partial\matW(\vtheta)}{\partial\theta_{i}}\mat{\mat Z}^{*}}\label{eq:trace3}\\
 & = & \sigma_{n}^{-2}\Trace{\sigma_{f}^{2}\mat{\mat Z}\frac{\partial\matW(\vtheta)}{\partial\theta_{i}}\mat{\mat Z}^{*}-\sigma_{f}^{4}\mat{\mat Z}\left(\sigma_{f}^{2}\mat{\mat Z}^{*}\mat{\mat Z}+\sigma_{n}^{2}\mat W(\vtheta)^{-1}\right)^{-1}\mat{\mat Z}^{*}\mat{\mat Z}\frac{\partial\matW(\vtheta)}{\partial\theta_{i}}\mat{\mat Z}^{*}}\nonumber \\
 & = & \sigma_{n}^{-2}\sigma_{f}^{2}\Trace{\mat{\mat Z}\frac{\partial\matW(\vtheta)}{\partial\theta_{i}}\mat{\mat Z}^{*}-\mat{\mat Z}\mat F(\vtheta)\frac{\partial\matW(\vtheta)}{\partial\theta_{i}}\mat{\mat Z}^{*}}\nonumber \\
 & = & \sigma_{n}^{-2}\sigma_{f}^{2}\Trace{\mat{\mat Z}\frac{\partial\matW(\vtheta)}{\partial\theta_{i}}\mat{\mat Z}^{*}}-\sigma_{n}^{-2}\sigma_{f}^{2}\Trace{\mat{\mat Z}\mat F(\vtheta)\frac{\partial\matW(\vtheta)}{\partial\theta_{i}}\mat{\mat Z}^{*}}\nonumber \\
 & = & \sigma_{n}^{-2}\sigma_{f}^{2}\Trace{\frac{\partial\matW(\vtheta)}{\partial\theta_{i}}\mat{\mat Z}^{*}\mat{\mat Z}}-\sigma_{n}^{-2}\sigma_{f}^{2}\Trace{\frac{\partial\matW(\vtheta)}{\partial\theta_{i}}\mat{\mat Z}^{*}\mat{\mat Z}\mat F(\vtheta)}\nonumber 
\end{eqnarray}
where in the second equality we use the Woodbury formula. 

\subsection{Efficient Gaussian Process Regression using QR Decomposition}

Here we present alternative formulas to the ones presented in Section~\ref{subsec:efficient-gpr}
and based on QR decomposition instead of the normal equations. Such
formulas are likely to be more numerically robust.

Let
\[
\mat Z=\mat Q_{\mat Z}\mat R_{\mat Z}
\]
be a thin QR decomposition of $\matZ,$ i.e. $\mat Q_{\mat Z}\in\mathbb{C}^{n\times s}$
is such that $\mat Q_{\mat Z}^{*}\mat Q_{\mat Z}=\mat I_{n}$ and
$\mat R_{\mat Z}\in\mathbb{C}^{s\times s}$ is an upper triangular
matrix. We suggest to compute the QR decomposition of $\matZ$ in
lieu of computing $\matZ^{*}\matZ$, and keeping only $\matR_{\matZ}$
and $\matQ_{\matZ}^{\conj}\y$ so still only $O(s^{2})$ is needed.
There is no asymptotic penalty in terms of arithmetic operation count
since the decomposition can be computed in $O(ns^{2})$ operations.
However, $\matZ^{*}\matZ$ tends to be ill-conditioned due to the
squaring of the condition number of $\matZ$, so it is best to avoid
computing it. Note that the QR decomposition is computed only once,
and not per iteration.

Let us consider a specific iteration, and for conciseness we omit
$\vtheta$ for the following formulas. Let
\[
\mat A\coloneqq\left[\begin{array}{c}
\mat R_{\mat Z}\\
\frac{\sigma_{n}}{\sigma_{f}}\mat W^{-1/2}
\end{array}\right]\in\mathbb{C}^{2s\times s}
\]
We compute a thin QR decomposition $\mat A=\mat Q_{\mat A}\mat R_{\mat A}$
of $\matA$ ($O(s^{3})$ operations) and write 
\[
\mat Q_{\mat A}=\left[\begin{array}{c}
\mat Q_{1}\\
\mat Q_{2}
\end{array}\right]
\]
where $\mat Q_{1},\mat Q_{2}\in\mathbb{C}^{s\times s}$, i.e.,
\[
\mat A=\left[\begin{array}{c}
\mat Q_{1}\\
\mat Q_{2}
\end{array}\right]\mat R_{\mat A}\,\text{.}
\]
Hence,
\[
\mat R_{\mat Z}=\mat Q_{1}\mat R_{\mat A}
\]
which implies that 
\[
\matZ=\mat Q_{\mat Z}\mat Q_{1}\mat R_{\mat A}\,.
\]
Therefore,
\[
\left[\begin{array}{c}
\mat Z\\
\frac{\sigma_{n}}{\sigma_{f}}\mat W^{-1/2}
\end{array}\right]=\left[\begin{array}{cc}
\mat Q_{\mat Z} & \mat 0\\
\mat 0 & \matI_{s}
\end{array}\right]\mat A=\left[\begin{array}{cc}
\mat Q_{\mat Z} & \mat 0\\
\mat 0 & \matI_{s}
\end{array}\right]\left[\begin{array}{c}
\matQ_{1}\\
\matQ_{2}
\end{array}\right]\matR_{\matA}=\left[\begin{array}{c}
\mat Q_{\mat Z}\matQ_{1}\\
\matQ_{2}
\end{array}\right]\matR_{\matA}
\]
which is a QR decomposition of 
\[
\matB\coloneqq\left[\begin{array}{c}
\mat Z\\
\frac{\sigma_{n}}{\sigma_{f}}\mat W^{-1/2}
\end{array}\right]
\]
 The crux is that given the QR decomposition of $\matZ$, we can compute
the QR decomposition of $\matB$ in $O(s^{3}$) arithmetic operations
instead of $O(ns^{2})$.

We now compute
\begin{eqnarray}
\w & = & \mat W^{-1}\left(\sigma_{f}^{2}\mat{\mat Z}^{*}\mat{\mat Z}+\sigma_{n}^{2}\mat W^{-1}\right)^{-1}\mat Z^{*}\y\nonumber \\
 & = & \sigma_{f}^{-2}\mat W^{-1}\left[\begin{array}{c}
\mat Z\\
\frac{\sigma_{n}}{\sigma_{f}}\mat W^{-1/2}
\end{array}\right]^{+}\left[\begin{array}{c}
\y\\
\mat 0_{s\times1}
\end{array}\right]\label{eq:w_QR}\\
 & = & \sigma_{f}^{-2}\mat W^{-1}\matR_{\matA}^{-1}\left[\matQ_{1}^{*}\matQ_{\matZ}^{*}\,\,\matQ_{2}^{*}\right]\left[\begin{array}{c}
\y\\
\mat 0_{s\times1}
\end{array}\right]\nonumber \\
 & = & \sigma_{f}^{-2}\mat W^{-1}\matR_{\matA}^{-1}\matQ_{1}^{*}\matQ_{\matZ}^{*}\y\nonumber 
\end{eqnarray}
which implies that $\w$ can be computed in $O(ns)$ operations (since
$\matW$ is diagonal).

Similarly, we also have
\begin{eqnarray*}
\mat F(\vtheta) & = & \sigma_{f}^{2}\left(\sigma_{f}^{2}\mat{\mat Z}^{*}\mat{\mat Z}+\sigma_{n}^{2}\mat W(\vtheta)^{-1}\right)^{-1}\mat{\mat Z}^{*}\mat{\mat Z}\\
 & = & \sigma_{f}^{2}\sigma_{f}^{-2}\mat R_{\mat A}^{-1}\mat Q_{1}^{*}\mat Q_{\mat Z}^{*}\mat{\mat Z}\\
 & = & \mat R_{\mat A}^{-1}\mat Q_{1}^{*}\mat Q_{\mat Z}^{*}\mat Q_{\mat Z}\mat R_{\mat Z}\\
 & = & \mat R_{\mat A}^{-1}\mat Q_{1}^{*}\mat R_{\mat Z}\\
 & = & \mat R_{\mat A}^{-1}\mat Q_{1}^{*}\mat Q_{1}\mat R_{\mat A}
\end{eqnarray*}
i.e., $\mat F(\vtheta)$ can be computed in $O(s^{3})$ operations.
This allows us to compute the first two formulas in Eq.~(\ref{eq:lik_trace})
in $O(s)$ time. As for the third formula, we have (\ref{eq:trace3}).
Since $\partial\matW(\vtheta)/\partial\theta_{i}$ is diagonal, we
need to compute only the diagonal of $\matZ^{*}\matZ$ and $\mat{\mat Z}^{*}\mat{\mat Z}\mat F(\vtheta)$.
The diagonal of $\matZ^{*}\matZ$ is just the square norms of the
columns of $\matZ$, and can be precomputed in $O(ns)$. Furthermore,
in some cases we know analytically the values of this norm. For example,
for shift-invariant kernels we use $\varphi(\x,\veta)=e^{-i\x^{\T}\veta}$
so the squared norms of the columns of $\matZ$ is equal to $n$.
As for $\mat{\mat Z}^{*}\mat{\mat Z}\mat F(\vtheta)$, we have:
\[
\mat{\mat Z}^{*}\mat{\mat Z}\mat F(\vtheta)=\matR_{\matA}^{*}(\matQ_{1}^{*}\matQ_{1})^{2}\matR_{\matA}=\matR_{\matZ}^{*}\matQ_{1}\matQ_{1}^{*}\matR_{\matZ}\,.
\]
Note that $\matQ_{1}^{*}\matR_{\matZ}$ has already been computed
for $\mat F(\vtheta)$. Since we only need the diagonal of $\mat{\mat Z}^{*}\mat{\mat Z}\mat F(\vtheta)$,
and this is an Hermitian matrix, the diagonal is just the squared
norms of the columns of $\matQ_{1}^{*}\matR_{\matZ}$. Thus, after
$O(s^{3})$ preprocessing, for every $\theta_{i}$ the first term
in Eq. (\ref{eq:diff_L}) can be computed in $O(s)$ operations .

As for the first identity in Eq.~(\ref{eq:diff_L}), we still have
\[
\valpha=\sigma_{n}^{-2}\left(\y-\sigma_{f}^{2}\mat Z\mat W\w\right)
\]
and $\mat Z^{*}\valpha=\w$. So, the second identity can be computed
as previously described using $\w$, which is computed according to
Eq.~(\ref{eq:w_QR}).

\section{\label{sec:multi_cheb} Analysis of Multivariate Chebyshev Approximation
in High Dimensions}

The following theorems are generalizations of similar one-dimensional
theorems. All the proofs rely on ideas similar to the ones presented
in \cite{mason1980near,mason1982minimal,mason2002chebyshev} and \cite[Theorem 3.1, Theorem 8.1]{ftrefethen2013approximation}.
Note that a similar generalization can found in \cite{trefethen2017multivariate,wang2020analysis}.
For completeness, we present our own proof, which is based on a different
technique.

For convenience, denote:

\[
E_{\mat{\rho}}\coloneqq\left\{ \z\in\mathbb{C}^{d}:\,\left|z_{k}+\sqrt{z_{k}^{2}-1}\right|<\rho_{k}\quad\quad\forall k=1,\ldots,d\right\} 
\]
as the polyellipse, where each $E_{\rho_{k}}$ is Bernstein ellipse
with foci at $\pm1$ and the sum of of major and minor semiaxis lengths
of the ellipse is $\rho_{k}$ . Also denote
\[
{\cal C}_{\mathbf{r}}\coloneqq\left\{ \z\in\mathbb{C}^{d}:\,\left|z_{k}\right|=r_{k}\quad\quad\forall k=1,\ldots,d\right\} 
\]
as the polycircle centered at the origin, and simply denote ${\cal C}_{1}$
in the case where $r_{1}=\ldots=r_{d}=1$. Finally, denote 
\[
{\cal A}_{\mathbf{r,R}}\coloneqq\left\{ \z\in\mathbb{C}^{d}:\,0<r_{k}<\left|z_{k}\right|<R_{k}\,\forall k=1,\ldots,d\right\} 
\]
as the polyannulus centered at the origin. \textcolor{black}{The following
proposition appears in \cite{scheidemann2005introduction} as Theorem
1.5.26. See also \cite[Pages 32, 90-91]{bochnerseveral} for further
details.} 
\begin{prop}
\label{prop:multi_Laurent}\textcolor{black}{Let ${\cal A}_{\mathbf{r,R}}$
be the polyannulus centered at the origin, and let $f$ be an analytic
complex function on ${\cal A}_{\mathbf{r,R}}$. Also, let $r_{k}<s_{k}<R_{k},\,k=1,\dots,d.$
Then $f$ has a multivariate Laurent expansion 
\[
f(\z)=\sum_{j_{1},\ldots,j_{d}=-\infty}^{\infty}b_{j_{1}\ldots j_{d}}z_{1}^{j_{1}}\cdots z_{d}^{j_{d}}
\]
converging uniformly on ${\cal A}_{\mathbf{r,R}}$. The coefficients
$b_{j_{1}\ldots j_{d}}$ are given by 
\[
b_{j_{1}\ldots j_{d}}=\frac{1}{\left(2i\pi\right)^{d}}\oint_{{\cal C}_{\s}}\frac{f\left(\z\right)}{\z^{\alpha+1}}d\z
\]
}
\end{prop}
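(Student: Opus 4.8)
The plan is to reduce the statement to the classical one-dimensional Laurent theorem and then iterate over the $d$ coordinates. Since $f$ is analytic on the polyannulus ${\cal A}_{\mathbf{r,R}}$, it is holomorphic in each variable separately and jointly continuous on every compact sub-polyannulus; in particular all the contour integrals below are well defined, and one may differentiate under the integral sign with respect to the frozen variables. Throughout, write $\alpha=(j_{1},\dots,j_{d})$ for the multi-index.

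First I would fix $(z_{2},\dots,z_{d})$ with $r_{k}<|z_{k}|<R_{k}$ for $k\ge 2$ and apply the one-variable Laurent expansion to $\zeta_{1}\mapsto f(\zeta_{1},z_{2},\dots,z_{d})$ on the annulus $r_{1}<|\zeta_{1}|<R_{1}$, obtaining
\[
f(\z)=\sum_{j_{1}=-\infty}^{\infty}c_{j_{1}}(z_{2},\dots,z_{d})\,z_{1}^{j_{1}},\qquad c_{j_{1}}(z_{2},\dots,z_{d})=\frac{1}{2i\pi}\oint_{|\zeta_{1}|=s_{1}}\frac{f(\zeta_{1},z_{2},\dots,z_{d})}{\zeta_{1}^{j_{1}+1}}\,d\zeta_{1},
\]
the series converging uniformly on compact subannuli in $z_{1}$. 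Differentiating under the integral sign shows each $c_{j_{1}}$ is analytic on the reduced polyannulus ${\cal A}_{(r_{2},\dots,r_{d}),(R_{2},\dots,R_{d})}$, so an induction on $d$ expands every $c_{j_{1}}$ as a Laurent series in $(z_{2},\dots,z_{d})$. Substituting and relabelling yields the claimed multi-index series with
\[
b_{j_{1}\dots j_{d}}=\frac{1}{(2i\pi)^{d}}\oint_{|\zeta_{d}|=s_{d}}\!\!\cdots\oint_{|\zeta_{1}|=s_{1}}\frac{f(\zeta_{1},\dots,\zeta_{d})}{\zeta_{1}^{j_{1}+1}\cdots\zeta_{d}^{j_{d}+1}}\,d\zeta_{1}\cdots d\zeta_{d},
\]
and Fubini's theorem (the integrand is jointly continuous on the compact polycircle) turns this iterated integral into the single contour integral $\frac{1}{(2i\pi)^{d}}\oint_{{\cal C}_{\s}}\frac{f(\z)}{\z^{\alpha+1}}\,d\z$ of the statement.

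Next I would show that $b_{j_{1}\dots j_{d}}$ does not depend on the choice of $s_{k}\in(r_{k},R_{k})$: the integrand $\frac{f(\z)}{\z^{\alpha+1}}$ is analytic on all of ${\cal A}_{\mathbf{r,R}}$, so applying the one-dimensional Cauchy deformation theorem one variable at a time (the remaining variables frozen on their circles) shows the value is unchanged when any single radius is moved within its range. Writing $M_{\s}\coloneqq\sup_{{\cal C}_{\s}}|f|$, the $ML$-estimate then gives $|b_{j_{1}\dots j_{d}}|\le M_{\s}\,s_{1}^{-j_{1}}\cdots s_{d}^{-j_{d}}$ for every admissible $\s$.

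Finally, for uniform convergence on a compact sub-polyannulus $\{\rho_{k}\le|z_{k}|\le P_{k}\}$ with $r_{k}<\rho_{k}<P_{k}<R_{k}$, I would partition the multi-indices into the $2^{d}$ octants determined by the signs of the $j_{k}$, and on each octant choose $s_{k}$ close to $R_{k}$ in the coordinates where $j_{k}\ge 0$ and close to $r_{k}$ where $j_{k}<0$; the bound on $|b_{j_{1}\dots j_{d}}|$ then dominates the corresponding part of the series by a product of $d$ convergent geometric series whose ratios are bounded away from $1$ uniformly on the sub-polyannulus, giving absolute and uniform convergence (whence also the freedom to rearrange the iterated expansions into a single multi-index sum). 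The main obstacle I anticipate is precisely this last step: the octant-by-octant bookkeeping together with the accompanying geometric-series estimates, which in turn hinge on the contour-independence of $b_{j_{1}\dots j_{d}}$ established above; by comparison, the multivariate Cauchy/Laurent machinery in the earlier steps is routine once the one-dimensional theorems are granted.
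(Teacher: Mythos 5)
The paper offers no proof of this proposition to compare against: it is quoted verbatim from Scheidemann's \emph{Introduction to Complex Analysis in Several Variables} (Theorem 1.5.26), with a further pointer to Bochner--Martin. Your argument is a correct, self-contained version of the standard proof of that textbook result. The chain of steps is sound: iterating the one-variable Laurent theorem coordinate by coordinate is legitimate because the coefficient functions $c_{j_{1}}(z_{2},\dots,z_{d})$, being contour integrals of a jointly holomorphic integrand, are themselves analytic on the reduced polyannulus (differentiation under the integral sign, or Morera plus Fubini, both work); Fubini on the compact polycircle collapses the iterated coefficient integrals into the single integral over ${\cal C}_{\s}$; one-variable contour deformation applied coordinate by coordinate gives independence of the radii; and the $ML$-bound $|b_{j_{1}\dots j_{d}}|\leq M_{\s}\,s_{1}^{-j_{1}}\cdots s_{d}^{-j_{d}}$, optimized octant by octant over the signs of the $j_{k}$, yields domination by products of geometric series. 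You are also right that this last absolute-convergence step is what licenses rearranging the nested one-variable expansions into a single unconditionally convergent multi-index sum; that point is frequently glossed over and you handle it explicitly.

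One minor remark on the statement rather than your proof: what you establish (and what is true) is uniform convergence on compact sub-polyannuli of ${\cal A}_{\mathbf{r,R}}$, not on the whole open polyannulus as the proposition's wording literally asserts. That is the standard reading of the cited theorem and is all that is needed where the proposition is used, in the proof of Proposition~\ref{prop:complex_coef}.
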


Note that the one dimensional Chebyshev polynomials in the complex
plane are defined by
\[
T_{j}(z)=\frac{w^{j}+w^{-j}}{2}
\]
where 
\[
z=\frac{w+w^{-1}}{2}\,.
\]
In addition, for a function $f$ that is analytic in the interior
and on the boundary of $E_{\rho}$ in the complex plane, the complex
Chebyshev series of $f$ is $\sum_{j=0}^{\infty}a_{j}T_{j}(z)$ where
\[
\forall j\neq0:\,a_{j}=\frac{2}{\pi\left(\rho^{2j}+\rho^{-2j}\right)}\oint_{E_{\rho}}f(z)\overline{T_{j}(z)}\left|\frac{dz}{\sqrt{1-z^{2}}}\right|,\quad a_{0}=\frac{1}{2\pi}\oint_{E_{\rho}}f(z)\overline{T_{j}(z)}\left|\frac{dz}{\sqrt{1-z^{2}}}\right|\,.
\]
The last definition was introduced in \cite{mason2002chebyshev},
and we generalize it to multivariate functions. The multivariate complex
tensorized Chebyshev series of a multivariate complex function $f$
that is analytic in the polyellipse $E_{\vrho}$ can be defined by
\[
\sum_{j_{1},\ldots,j_{d}=0}^{\infty}a_{j_{1}\ldots j_{d}}T_{j_{1}}(z_{1})\cdots T_{j_{d}}(z_{d})\,.
\]

\begin{prop}
\label{prop:complex_coef}Let $f$ be a multivariate complex function
that is analytic in the polyellipse $E_{\mat{\rho}}$, where $\vrho=(\rho_{1},\dots,\rho_{d}),\,\rho_{1},\dots,\rho_{d}>0$.
Then, the coefficients of its multivariate complex tensorized Chebyshev
series
\begin{equation}
\sum_{j_{1},\ldots,j_{d}=0}^{\infty}a_{j_{1}\ldots j_{d}}T_{j_{1}}(z_{1})\cdots T_{j_{d}}(z_{d})\label{eq:multi_Cheb}
\end{equation}
are given by
\[
a_{j_{1}\ldots j_{d}}=\frac{2^{d-m}}{\pi^{d}\left(\rho_{1}^{2j_{1}}+\rho_{1}^{-2j_{1}}\right)\cdots\left(\rho_{d}^{2j_{d}}+\rho_{d}^{-2j_{d}}\right)}\oint_{E_{\mat{\rho}}}f\left(z_{1},\ldots,z_{d}\right)\overline{T_{j_{1}}(z_{1})\cdots T_{j_{d}}(z_{d})}\left|\frac{dz_{1}\cdots dz_{d}}{\sqrt{1-z_{1}^{2}\cdots}\sqrt{1-z_{d}^{2}}}\right|
\]
where $m:=\#\{j_{k}:\,j_{k}=0\}$.
\end{prop}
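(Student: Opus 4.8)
The plan is to reduce the multivariate statement to the multivariate Laurent expansion of Proposition~\ref{prop:multi_Laurent} by applying, coordinate by coordinate, the Joukowski substitution $z_k=(w_k+w_k^{-1})/2$. Recall that $w\mapsto(w+w^{-1})/2$ maps the annulus $\{\rho_k^{-1}<|w_k|<\rho_k\}$ onto the open Bernstein ellipse $E_{\rho_k}$ (doubly, identifying $w_k$ with $w_k^{-1}$), so the function
\[
g(w_1,\dots,w_d)\coloneqq f\!\left(\tfrac{w_1+w_1^{-1}}{2},\dots,\tfrac{w_d+w_d^{-1}}{2}\right)
\]
is analytic on the polyannulus $\{\rho_k^{-1}<|w_k|<\rho_k,\ k=1,\dots,d\}$ (being a composition of analytic maps) and, by construction, is invariant under $w_k\mapsto w_k^{-1}$ in each coordinate separately. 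First I would invoke Proposition~\ref{prop:multi_Laurent} to write $g(\w)=\sum_{j_1,\dots,j_d\in\Z} b_{j_1\dots j_d}\,w_1^{j_1}\cdots w_d^{j_d}$, uniformly convergent on compact sub-polyannuli, with
\[
b_{j_1\dots j_d}=\frac{1}{(2\pi i)^d}\oint_{{\cal C}_{\s}}\frac{g(\w)}{w_1^{j_1+1}\cdots w_d^{j_d+1}}\,d\w,\qquad \rho_k^{-1}<s_k<\rho_k .
\]
(If $f$ is analytic only on the open polyellipse, the contour $\oint_{E_{\vrho}}$ in the statement is read as over any slightly shrunk polyellipse, the coefficients being independent of that choice.)

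Next I would exploit the coordinatewise $w_k\mapsto w_k^{-1}$ symmetry of $g$, which forces $b_{j_1\dots j_d}=b_{\varepsilon_1 j_1,\dots,\varepsilon_d j_d}$ for every sign vector $(\varepsilon_1,\dots,\varepsilon_d)\in\{\pm1\}^d$. Using the uniform convergence to rearrange the series, I would group the $2^{d-m}$ Laurent monomials sharing a common value of $(|j_1|,\dots,|j_d|)$, where $m=\#\{k:j_k=0\}$, and apply $w_k^{j_k}+w_k^{-j_k}=2T_{j_k}(z_k)$ together with $w_k^{0}=1=T_0(z_k)$. This recasts the Laurent series of $g$ as the tensorized complex Chebyshev series~(\ref{eq:multi_Cheb}) of $f$, with
\[
a_{j_1\dots j_d}=2^{d-m}\,b_{j_1\dots j_d},\qquad j_1,\dots,j_d\ge 0 .
\]

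It then remains to convert the contour formula for $b_{j_1\dots j_d}$ into the claimed expression over $E_{\vrho}$. Letting $s_k\uparrow\rho_k$ and parametrizing each circle by $w_k=\rho_k e^{i\theta_k}$, so $z_k=\tfrac12(\rho_k e^{i\theta_k}+\rho_k^{-1}e^{-i\theta_k})$, a short computation gives $z_k^2-1=\big(\tfrac12(\rho_k e^{i\theta_k}-\rho_k^{-1}e^{-i\theta_k})\big)^2$, hence $dz_k=i\sqrt{z_k^2-1}\,d\theta_k$ for a suitable branch, and therefore $\big|dz_k/\sqrt{1-z_k^2}\big|=d\theta_k$. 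Substituting $T_{j_k}(z_k)=\tfrac12\big(\rho_k^{j_k}e^{ij_k\theta_k}+\rho_k^{-j_k}e^{-ij_k\theta_k}\big)$ into $\oint_{E_{\vrho}} f\,\overline{T_{j_1}\cdots T_{j_d}}\,|dz_1\cdots dz_d/(\sqrt{1-z_1^2}\cdots)|$, expanding as a $d$-fold Fourier integral of $g(\rho_1 e^{i\theta_1},\dots)$, and using $b_{j_k}=b_{-j_k}$ in each slot, produces exactly the normalizing factor $\prod_{k=1}^d\pi\big(\rho_k^{2j_k}+\rho_k^{-2j_k}\big)$. Combining this with $a_{j_1\dots j_d}=2^{d-m}b_{j_1\dots j_d}$ from the previous step yields the stated formula.

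The main obstacle I anticipate is bookkeeping rather than conceptual: justifying the term-by-term rearrangement of the $d$-fold Laurent series into the Chebyshev grouping (licensed by the uniform convergence in Proposition~\ref{prop:multi_Laurent}) and tracking the powers of $2$, the branch of $\sqrt{z_k^2-1}$, and the $j_k=0$ versus $j_k\neq0$ cases consistently across all $d$ coordinates. The underlying one-dimensional identities are precisely those of \cite[Theorem~3.1]{ftrefethen2013approximation} and \cite{mason2002chebyshev}; the content of the proof is verifying that tensorization plus the coordinatewise symmetry argument goes through uniformly.
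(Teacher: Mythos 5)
Your proposal is correct and follows essentially the same route as the paper's proof: the Joukowski substitution $z_k=(w_k+w_k^{-1})/2$, the multivariate Laurent expansion of $g$ from Proposition~\ref{prop:multi_Laurent}, the coordinatewise inversion symmetry to identify $a_{j_1\dots j_d}=2^{d-m}b_{j_1\dots j_d}$, and the conversion back to the polyellipse via $\bigl|dz_k/\sqrt{1-z_k^2}\bigr|=d\theta_k$ and $\overline{T_{j_k}(z_k)}=\tfrac12\bigl(\rho_k^{2j_k}w_k^{-j_k}+\rho_k^{-2j_k}w_k^{j_k}\bigr)$. The only organizational difference is that you carry out the symmetrization of the contour integral by directly expanding the product of conjugated Chebyshev factors as a $d$-fold Fourier integral and using $b_{\varepsilon j}=b_j$, whereas the paper packages the same identity as an induction over coordinates; the normalizing factor $\prod_k\pi\bigl(\rho_k^{2j_k}+\rho_k^{-2j_k}\bigr)$ and the powers of $2$ come out consistently either way.
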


We remark that this Chebyshev series converges to $f$ uniformly,
as claimed in \cite[Theorem 9.1]{mason1982minimal}.
\begin{proof}
We begin by mapping $f(\z)$ on the contour of $E_{\mat{\rho}}$ into
$g(\w)$ on ${\cal C}_{\mat{\rho}}$. For $k=1,\dots,d$, define

\[
z_{k}=\frac{w_{k}+w_{k}^{-1}}{2}
\]
 such that $g\left(w_{1},\ldots,w_{d}\right)=f\left(z_{1},\ldots,z_{d}\right)=f\left(\frac{w_{1}+w_{1}^{-1}}{2},\ldots,\frac{w_{d}+w_{d}^{-1}}{2}\right)$.
It follows that 
\begin{equation}
g\left(w_{1},\ldots,w_{d}\right)=g\left(w_{1}^{\alpha_{1}},\ldots,w_{d}^{\alpha_{d}}\right),\quad\alpha_{1},\ldots,\alpha_{d}\in\left\{ -1,1\right\} \,.\label{eq:g_sym}
\end{equation}
The equation for each $w_{k}$ has two solutions
\[
w_{k}=z_{k}\pm\sqrt{z_{k}^{2}-1}\,.
\]
We choose the solutions $w_{k}=z_{k}+\sqrt{z_{k}^{2}-1}\,$, so $\left|w_{k}\right|=\rho_{k}>1$
and thus the second solution for each $k=1,\ldots,d$ is essentially
$w_{k}^{-1}$. These relations imply that $g$ is analytic in the
polyannulus between ${\cal C}_{\mat{\rho}}$ and ${\cal C}_{\mat{\rho}^{-1}}$.
We also have for each $k=1,\ldots,d$ 
\[
T_{j_{k}}(z_{k})=\frac{w_{k}^{j_{k}}+w_{k}^{-j_{k}}}{2}
\]
Therefore, and since $f$ is analytic in $E_{\vrho}$, we have
\begin{align*}
g(\w) & =f(\z)\\
 & =\sum_{j_{1},\ldots,j_{d}=0}^{\infty}a_{j_{1}\ldots j_{d}}T_{j_{1}}(z_{1})\cdots T_{j_{d}}(z_{d})\\
 & =\sum_{j_{1},\ldots,j_{d}=0}^{\infty}\frac{a_{j_{1}\ldots j_{d}}}{2^{d}}\left(w_{k}^{j_{1}}+w_{k}^{-j_{1}}\right)\cdots\left(w_{d}^{j_{d}}+w_{d}^{-j_{d}}\right)
\end{align*}
That is, the series given in Eq.~(\ref{eq:multi_Cheb}) can be written
as the Laurent series of $g$. Thus, by Proposition \ref{prop:multi_Laurent},
the coefficients are given by
\[
\frac{a_{j_{1}\ldots j_{d}}}{2^{d-m}}=\frac{1}{\left(2i\pi\right)^{d}}\oint_{{\cal C}_{\mat{\rho}}}w_{1}^{-1-j_{1}}\cdots w_{d}^{-1-j_{d}}g\left(w_{1},\ldots,w_{d}\right)dw_{1}\cdots dw_{d}\,.
\]
which implies
\begin{equation}
a_{j_{1}\ldots j_{d}}=\frac{1}{2^{m}\left(i\pi\right)^{d}}\oint_{{\cal C}_{\mat{\rho}}}w_{1}^{-1-j_{1}}\cdots w_{d}^{-1-j_{d}}g\left(w_{1},\ldots,w_{d}\right)dw_{1}\cdots dw_{d}\,.\label{eq:coeff_w}
\end{equation}
The last integral can be written also as 
\begin{eqnarray}
\oint_{{\cal C}_{\mat{\rho}}}w_{1}^{-1-j_{1}}\cdots w_{d}^{-1-j_{d}}g\left(w_{1},\ldots,w_{d}\right)dw_{1}\cdots dw_{d} & =\nonumber \\
\oint_{{\cal C}_{\mat{\rho}}}w_{1}^{-1-j_{1}}\cdots w_{d}^{-1-j_{d}}g\left(w_{1}^{\alpha_{1}},\ldots,w_{d}^{\alpha_{d}}\right)dw_{1}\cdots dw_{d} & =\label{eq:g_powers}\\
\oint_{\left|\tilde{w}_{1}\right|=\rho_{1}^{\alpha_{1}}}\ldots\oint_{\left|\tilde{w}_{d}\right|=\rho_{d}^{\alpha_{d}}}\widetilde{w}_{1}^{-1+\alpha_{1}j_{1}}\cdots\widetilde{w}_{d}^{-1+\alpha_{d}j_{d}}g\left(\widetilde{w}_{1},\ldots,\widetilde{w}_{d}\right)d\widetilde{w}_{1}\cdots d\widetilde{w}_{d} & =\nonumber \\
\oint_{{\cal C}_{\mat{\rho}}}\widetilde{w}_{1}^{-1+\alpha_{1}j_{1}}\cdots\widetilde{w}_{d}^{-1+\alpha_{d}j_{d}}g\left(\widetilde{w}_{1},\ldots,\widetilde{w}_{d}\right)d\widetilde{w}_{1}\cdots d\widetilde{w}_{d}\nonumber 
\end{eqnarray}
where the first equality follows from Eq.~(\ref{eq:g_sym}), the
second equality is changing of variables from $w_{k}$ to $\widetilde{w}_{k}=w_{k}^{\alpha_{k}},\,\alpha_{k}\in\left\{ -1,1\right\} $,
and the last equality is due to Definition \ref{prop:multi_Laurent}
which means the integral also can be considered on ${\cal C}_{\mat{\rho}^{-1}}$.
Now we show that 
\begin{eqnarray}
\oint_{{\cal C}_{\mat{\rho}}}w_{1}^{-1-j_{1}}\cdots w_{d}^{-1-j_{d}}g\left(w_{1},\ldots,w_{d}\right)dw_{1}\cdots dw_{d} & =\label{eq:g_power_ellipse}\\
\prod_{k=1}^{d}\frac{1}{\rho_{k}^{2j_{k}}+\rho_{k}^{-2j_{k}}}\oint_{{\cal C}_{\mat{\rho}}}\prod_{k=1}^{d}\left(\rho_{k}^{2j_{k}}w_{k}^{-j_{k}}+\rho_{k}^{-2j_{k}}w_{k}^{j_{k}}\right)g\left(w_{1},\ldots,w_{d}\right)\frac{dw_{1}\cdots dw_{d}}{w_{1}\cdots w_{d}}\nonumber 
\end{eqnarray}
by induction on the number of changes of variables.

\textbf{The base case: }apply the change of variables only for one
of the variables. Without loss of generality, we show it for $w_{1}$:\textbf{
\begin{eqnarray*}
\oint_{{\cal C}_{\mat{\rho}}}w_{1}^{-1-j_{1}}\cdots w_{d}^{-1-j_{d}}g\left(w_{1},\ldots,w_{d}\right)dw_{1}\cdots dw_{d} & =\\
\frac{\rho_{1}^{2j_{1}}+\rho_{1}^{-2j_{1}}}{\rho_{1}^{2j_{1}}+\rho_{1}^{-2j_{1}}}\oint_{{\cal C}_{\mat{\rho}}}w_{1}^{-j_{1}}\cdots w_{d}^{-j_{d}}g\left(w_{1},\ldots,w_{d}\right)\frac{dw_{1}\cdots dw_{d}}{w_{1}\cdots w_{d}} & =\\
\frac{1}{\rho_{1}^{2j_{1}}+\rho_{1}^{-2j_{1}}}\oint_{{\cal C}_{\mat{\rho}}}\rho_{1}^{2j_{1}}w_{1}^{-j_{1}}\cdots w_{d}^{-j_{d}}g\left(w_{1},\ldots,w_{d}\right)\frac{dw_{1}\cdots dw_{d}}{w_{1}\cdots w_{d}} & +\\
+\frac{1}{\rho_{1}^{2j_{1}}+\rho_{1}^{-2j_{1}}}\oint_{{\cal C}_{\mat{\rho}}}\rho_{1}^{-2j_{1}}w_{1}^{-j_{1}}\cdots w_{d}^{-j_{d}}g\left(w_{1},\ldots,w_{d}\right)\frac{dw_{1}\cdots dw_{d}}{w_{1}\cdots w_{d}} & =\\
\frac{1}{\rho_{1}^{2j_{1}}+\rho_{1}^{-2j_{1}}}\oint_{{\cal C}_{\mat{\rho}}}\rho_{1}^{2j_{1}}w_{1}^{-j_{1}}\cdots w_{d}^{-j_{d}}g\left(w_{1},\ldots,w_{d}\right)\frac{dw_{1}\cdots dw_{d}}{w_{1}\cdots w_{d}} & +\\
+\frac{1}{\rho_{1}^{2j_{1}}+\rho_{1}^{-2j_{1}}}\oint_{{\cal C}_{\mat{\rho}}}\rho_{1}^{-2j_{1}}w_{1}^{j_{1}}w_{2}^{-j_{2}}\cdots w_{d}^{-j_{d}}g\left(w_{1},\ldots,w_{d}\right)\frac{dw_{1}\cdots dw_{d}}{w_{1}\cdots w_{d}} & =\\
\frac{1}{\rho_{1}^{2j_{1}}+\rho_{1}^{-2j_{1}}}\oint_{{\cal C}_{\mat{\rho}}}\left(\rho_{1}^{2j_{1}}w_{1}^{-j_{1}}+\rho_{1}^{-2j_{1}}w_{1}^{j_{1}}\right)w_{2}^{-j_{2}}\cdots w_{d}^{-j_{d}}g\left(w_{1},\ldots,w_{d}\right)\frac{dw_{1}\cdots dw_{d}}{w_{1}\cdots w_{d}}
\end{eqnarray*}
}where in the third equality we use Eq.~(\ref{eq:g_powers}) with
$\alpha_{1}=1$.

\textbf{The inductive step: }suppose that for \textbf{$1<n-1<d$}
changes of variables, the following holds:
\begin{eqnarray}
\oint_{{\cal C}_{\mat{\rho}}}w_{1}^{-1-j_{1}}\cdots w_{d}^{-1-j_{d}}g\left(w_{1},\ldots,w_{d}\right)dw_{1}\cdots dw_{d} & =\label{eq:ind_assum}\\
\prod_{k=1}^{n-1}\frac{1}{\rho_{k}^{2j_{k}}+\rho_{k}^{-2j_{k}}}\oint_{{\cal C}_{\mat{\rho}}}\prod_{k=1}^{n-1}\left(\rho_{k}^{2j_{k}}w_{k}^{-j_{k}}+\rho_{k}^{-2j_{k}}w_{k}^{j_{k}}\right)w_{n}^{-j_{n}}\cdots w_{d}^{-j_{d}}g\left(w_{1},\ldots,w_{d}\right)\frac{dw_{1}\cdots dw_{d}}{w_{1}\cdots w_{d}}\nonumber 
\end{eqnarray}
Then, we show that this is also true for $n$ changes of variables:\textbf{
\begin{eqnarray*}
\oint_{{\cal C}_{\mat{\rho}}}w_{1}^{-1-j_{1}}\cdots w_{d}^{-1-j_{d}}g\left(w_{1},\ldots,w_{d}\right)dw_{1}\cdots dw_{d} & =\\
\frac{\rho_{n}^{2j_{n}}+\rho_{n}^{-2j_{n}}}{\rho_{n}^{2j_{n}}+\rho_{n}^{-2j_{n}}}\oint_{{\cal C}_{\mat{\rho}}}w_{1}^{-j_{1}}\cdots w_{d}^{-j_{d}}g\left(w_{1},\ldots,w_{d}\right)\frac{dw_{1}\cdots dw_{d}}{w_{1}\cdots w_{d}} & =\\
\prod_{k=1}^{n}\frac{1}{\rho_{k}^{2j_{k}}+\rho_{k}^{-2j_{k}}}\left(\rho_{n}^{2j_{n}}+\rho_{n}^{-2j_{n}}\right)\oint_{{\cal C}_{\mat{\rho}}}\left(\prod_{k=1}^{n-1}\left(\rho_{k}^{2j_{k}}w_{k}^{-j_{k}}+\rho_{k}^{-2j_{k}}w_{k}^{j_{k}}\right)\right)w_{n}^{-j_{n}}\cdots w_{d}^{-j_{d}}g\left(w_{1},\ldots,w_{d}\right)\frac{dw_{1}\cdots dw_{d}}{w_{1}\cdots w_{d}} & =\\
\prod_{k=1}^{n}\frac{1}{\rho_{k}^{2j_{k}}+\rho_{k}^{-2j_{k}}}\oint_{{\cal C}_{\mat{\rho}}}\left(\prod_{k=1}^{n-1}\left(\rho_{k}^{2j_{k}}w_{k}^{-j_{k}}+\rho_{k}^{-2j_{k}}w_{k}^{j_{k}}\right)\right)\rho_{n}^{2j_{n}}w_{n}^{-j_{n}}\cdots w_{d}^{-j_{d}}g\left(w_{1},\ldots,w_{d}\right)\frac{dw_{1}\cdots dw_{d}}{w_{1}\cdots w_{d}} & +\\
+\prod_{k=1}^{n}\frac{1}{\rho_{k}^{2j_{k}}+\rho_{k}^{-2j_{k}}}\oint_{{\cal C}_{\mat{\rho}}}\left(\prod_{k=1}^{n-1}\left(\rho_{k}^{2j_{k}}w_{k}^{-j_{k}}+\rho_{k}^{-2j_{k}}w_{k}^{j_{k}}\right)\right)\rho_{n}^{-2j_{n}}w_{n}^{-j_{n}}\cdots w_{d}^{-j_{d}}g\left(w_{1},\ldots,w_{d}\right)\frac{dw_{1}\cdots dw_{d}}{w_{1}\cdots w_{d}} & =\\
\prod_{k=1}^{n}\frac{1}{\rho_{k}^{2j_{k}}+\rho_{k}^{-2j_{k}}}\oint_{{\cal C}_{\mat{\rho}}}\left(\prod_{k=1}^{n-1}\left(\rho_{k}^{2j_{k}}w_{k}^{-j_{k}}+\rho_{k}^{-2j_{k}}w_{k}^{j_{k}}\right)\right)\rho_{n}^{2j_{n}}w_{n}^{-j_{n}}\cdots w_{d}^{-j_{d}}g\left(w_{1},\ldots,w_{d}\right)\frac{dw_{1}\cdots dw_{d}}{w_{1}\cdots w_{d}} & +\\
+\prod_{k=1}^{n}\frac{1}{\rho_{k}^{2j_{k}}+\rho_{k}^{-2j_{k}}}\oint_{{\cal C}_{\mat{\rho}}}\left(\prod_{k=1}^{n-1}\left(\rho_{k}^{2j_{k}}w_{k}^{-j_{k}}+\rho_{k}^{-2j_{k}}w_{k}^{j_{k}}\right)\right)\rho_{n}^{-2j_{n}}w_{n}^{j_{n}}\cdots w_{d}^{-j_{d}}g\left(w_{1},\ldots,w_{d}\right)\frac{dw_{1}\cdots dw_{d}}{w_{1}\cdots w_{d}} & =\\
\prod_{k=1}^{n}\frac{1}{\rho_{k}^{2j_{k}}+\rho_{k}^{-2j_{k}}}\oint_{{\cal C}_{\mat{\rho}}}\left(\prod_{k=1}^{n}\left(\rho_{k}^{2j_{k}}w_{k}^{-j_{k}}+\rho_{k}^{-2j_{k}}w_{k}^{j_{k}}\right)\right)w_{n+1}^{-j_{n+1}}\cdots w_{d}^{-j_{d}}g\left(w_{1},\ldots,w_{d}\right)\frac{dw_{1}\cdots dw_{d}}{w_{1}\cdots w_{d}}
\end{eqnarray*}
}where in the second equality we use Eq.~(\ref{eq:ind_assum}), and
in the fourth equality we use Eq.~(\ref{eq:g_powers}) with $\alpha_{n}=1$.
Therefore, by induction, for $n=d$ we obtain Eq.~(\ref{eq:g_power_ellipse}),
and by Eq.~(\ref{eq:coeff_w}):
\[
a_{j_{1}\ldots j_{d}}=\frac{1}{2^{m}\left(i\pi\right)^{d}}\prod_{k=1}^{d}\frac{1}{\rho_{k}^{2j_{k}}+\rho_{k}^{-2j_{k}}}\oint_{{\cal C}_{\mat{\rho}}}\prod_{k=1}^{d}\left(\rho_{k}^{2j_{k}}w_{k}^{-j_{k}}+\rho_{k}^{-2j_{k}}w_{k}^{j_{k}}\right)g\left(w_{1},\ldots,w_{d}\right)\frac{dw_{1}\cdots dw_{d}}{w_{1}\cdots w_{d}}\,.
\]

Now, for each $k=1,\ldots,d$ we have
\begin{equation}
\left|\frac{dz_{k}}{\sqrt{1-z_{k}^{2}}}\right|=\frac{dw_{k}}{iw_{k}}\label{eq:dz_to_dx}
\end{equation}
and
\begin{equation}
\overline{T_{j_{k}}(z_{k})}=\frac{\overline{w_{k}}^{j_{k}}+\overline{w_{k}}^{-j_{k}}}{2}=\frac{\rho_{k}^{2j_{k}}w_{k}^{-j_{k}}+\rho_{k}^{-2j_{k}}w_{k}^{j_{k}}}{2}\label{eq:Tj_to_w_power}
\end{equation}
where $w_{k}=\rho_{k}e^{i\theta_{k}}$. Therefore, replacing $g\left(w_{1},\ldots,w_{d}\right)$
by $f\left(z_{1},\ldots,z_{d}\right)$ and recall that each $w_{k}$
on ${\cal C}_{\rho_{k}}$ maps $z_{k}$ on $E_{\rho_{k}}$, we obtain
\[
a_{j_{1}\ldots j_{d}}=\frac{2^{d-m}}{\pi^{d}\left(\rho_{1}^{2j_{1}}+\rho_{1}^{-2j_{1}}\right)\cdots\left(\rho_{d}^{2j_{d}}+\rho_{d}^{-2j_{d}}\right)}\oint_{E_{\mat{\rho}}}f\left(z_{1},\ldots,z_{d}\right)\overline{T_{j_{1}}(z_{1})\cdots T_{j_{d}}(z_{d})}\left|\frac{dz_{1}\cdots dz_{d}}{\sqrt{1-z_{1}^{2}\cdots}\sqrt{1-z_{d}^{2}}}\right|\,.
\]
We proceed to the main theorem:
\end{proof}
\begin{thm}
\label{thm:trefethen_coeff_2D} Let $f(x_{1},\ldots,x_{d})$ be an
analytic function in $\left[-1,1\right]^{d}$ and analytically continuable
to the polyellipse $E_{\mat{\rho}}$ where it satisfies $|f(x_{1},\ldots,x_{d})|\leq M$
for some $M>0$. Let $T_{j}(x):=\cos(j\cos^{-1}(x))$ be the $j$
degree one dimensional Chebyshev polynomial, and $E_{\rho_{1}},\ldots,E_{\rho_{d}}$
are the open Bernstein ellipses with major and minor semiaxis lengths
correspondingly summing to $\rho_{1},\ldots,\rho_{d}>1$. Then:
\begin{enumerate}
\item The multivariate (real) Chebyshev coefficients of $f$ are given by
\[
a_{j_{1}\ldots j_{d}}:=\frac{2^{d-m}}{\pi^{d}}\int_{\left[-1,1\right]^{d}}\frac{f(x_{1},\ldots,x_{d})T_{j_{1}}(x_{1})\cdots T_{j_{d}}(x_{d})}{\sqrt{1-x_{1}^{2}}\cdots\sqrt{1-x_{d}^{2}}}dx_{1}\cdots dx_{d}
\]
where $m:=\#\{j_{k}:\,j_{k}=0\}$.
\item The coefficients satisfy
\[
|a_{j_{1}\ldots j_{d}}|\leq\frac{2^{d-m}M}{\rho_{1}^{j_{1}}\cdots\rho_{d}^{j_{d}}}\,.
\]
\end{enumerate}
\end{thm}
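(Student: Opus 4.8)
The plan is to establish the two parts of the theorem separately and to connect them through uniqueness of the multivariate Chebyshev expansion: a function analytic on a neighbourhood of $[-1,1]^d$ has a unique tensorised Chebyshev expansion, so the ``real'' coefficients in Part~1 coincide with the coefficients appearing in Proposition~\ref{prop:complex_coef}, and the bound proved in Part~2 for the latter then applies to the former. Throughout, write $\mathbf j=(j_1,\dots,j_d)$ and $m=\#\{k:\,j_k=0\}$.

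For Part~1, I would first observe that, since $f$ is analytic on the compact set $[-1,1]^d$, it is analytic on $\overline{E_{\vrho_0}}$ for some polyellipse with every $\rho_{0,k}>1$; consequently its tensorised Chebyshev series $f(x_1,\dots,x_d)=\sum_{\mathbf j\geq 0}a_{\mathbf j}T_{j_1}(x_1)\cdots T_{j_d}(x_d)$ converges uniformly on $[-1,1]^d$ (the $d$-fold tensor version of the classical one-dimensional statement, cf.~\cite{ftrefethen2013approximation}). Multiplying by $\prod_{k=1}^d T_{j_k}(x_k)(1-x_k^2)^{-1/2}$, integrating over $[-1,1]^d$, interchanging sum and integral by uniform convergence, and using the one-dimensional orthogonality relation $\int_{-1}^1 T_\mu(x)T_\nu(x)(1-x^2)^{-1/2}\,dx$ --- which equals $0$ for $\mu\neq\nu$, $\pi$ for $\mu=\nu=0$, and $\pi/2$ for $\mu=\nu\geq1$ --- yields the claimed formula: the product of the per-coordinate normalising constants is $\pi^d\,2^{-(d-m)}$ (a factor $\pi$ from each coordinate with $j_k=0$ and $\pi/2$ from each coordinate with $j_k\geq 1$).

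For Part~2, I would fix radii $r_1,\dots,r_d$ with $1<r_k<\rho_k$ and set $\mathbf r=(r_1,\dots,r_d)$. Since $\overline{E_{\mathbf r}}\subset E_{\vrho}$, $f$ is analytic and bounded by $M$ on $E_{\mathbf r}$, so Proposition~\ref{prop:complex_coef} applies with $\mathbf r$ in place of $\vrho$; more conveniently I would use the $w$-plane form of the coefficient formula obtained in its proof, Eq.~(\ref{eq:coeff_w}), namely $a_{\mathbf j}=\frac{1}{2^m(i\pi)^d}\oint_{{\cal C}_{\mathbf r}}\bigl(\prod_{k=1}^d w_k^{-1-j_k}\bigr)g(w_1,\dots,w_d)\,dw_1\cdots dw_d$, where $g(w_1,\dots,w_d)=f\bigl(\tfrac{w_1+w_1^{-1}}{2},\dots,\tfrac{w_d+w_d^{-1}}{2}\bigr)$. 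On ${\cal C}_{\mathbf r}$ we have $|w_k|=r_k$, so $|w_k^{-1-j_k}|=r_k^{-1-j_k}$; the length of $\{|w_k|=r_k\}$ is $2\pi r_k$; and $|g|\leq M$ on ${\cal C}_{\mathbf r}$ because each $z_k=(w_k+w_k^{-1})/2$ traces $\partial E_{r_k}\subset E_{\rho_k}$, so the argument $(z_1,\dots,z_d)$ lies in $\overline{E_{\mathbf r}}\subset E_{\vrho}$. Multiplying these bounds, $|a_{\mathbf j}|\leq\frac{M}{2^m\pi^d}\prod_{k=1}^d\bigl(r_k^{-1-j_k}\cdot 2\pi r_k\bigr)=2^{d-m}M\prod_{k=1}^d r_k^{-j_k}$; since this holds for every admissible $\mathbf r$ while $|a_{\mathbf j}|$ is fixed, letting $r_k\uparrow\rho_k$ gives $|a_{\mathbf j}|\leq 2^{d-m}M\prod_{k=1}^d\rho_k^{-j_k}$.

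I expect this to be largely a bookkeeping exercise, since the substantive content --- transplanting the polyellipse problem to a polyannulus and extracting the coefficients via the multivariate Laurent expansion (Proposition~\ref{prop:multi_Laurent}) --- is already carried out in Proposition~\ref{prop:complex_coef}. The two points needing attention are: (i) the size estimate must be run on the $w$-plane/Laurent form (Eq.~(\ref{eq:coeff_w})) rather than on the ellipse form stated in Proposition~\ref{prop:complex_coef}, since a direct estimate on the polyellipse only produces the weaker factor $\prod_k\frac{r_k^{j_k}+r_k^{-j_k}}{r_k^{2j_k}+r_k^{-2j_k}}$ in place of $\prod_k r_k^{-j_k}$; and (ii) because $f$ is assumed bounded by $M$ only on the \emph{open} polyellipse, the estimate has to be done on a slightly smaller contour ${\cal C}_{\mathbf r}$ and then passed to the limit $r_k\uparrow\rho_k$. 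Carrying the $2^{d-m}$ normalisation correctly through both parts --- the orthogonality constants in Part~1 and the $(2\pi)^d/\pi^d$ collapse in Part~2 --- is the remaining routine detail.
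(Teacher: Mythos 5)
Your proof is correct, and for the substantive part of the theorem --- the coefficient bound --- it is the same argument as the paper's: both estimate the $w$-plane Laurent form of the coefficients (Eq.~(\ref{eq:coeff_w})) on the polycircle, using $|g|\leq M$, $|w_k|^{-1-j_k}$, and contour length $2\pi$ per factor to collapse $(2\pi)^d/(2^m\pi^d)$ into $2^{d-m}$. Two minor points where you diverge, both to your credit: for Part~1 you derive the real integral formula from orthogonality of the Chebyshev polynomials under the weight $\prod_k(1-x_k^2)^{-1/2}$ together with uniform convergence of the tensorised expansion, whereas the paper obtains it by ``replacing $z_k$ with $\mathrm{Re}(z_k)$'' in the complex contour formula of Proposition~\ref{prop:complex_coef}, which is looser; and for Part~2 you correctly note that $f$ is only assumed bounded on the \emph{open} polyellipse, so you run the estimate on ${\cal C}_{\mathbf{r}}$ with $r_k<\rho_k$ and let $r_k\uparrow\rho_k$, a limiting step the paper silently skips by integrating directly over ${\cal C}_{\vrho}$. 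Neither difference changes the structure of the proof; your version is simply a more careful rendering of the same argument.
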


Some versions of this theorem appear in\textcolor{black}{{} \cite[Pages 32, 94-95]{bochnerseveral}
and \cite{trefethen2017multivariate}}, however without an explicit
bound.
\begin{proof}
As in the proof of Proposition \ref{prop:complex_coef}, consider
the analytic continuation $f(\z)$ on the contour of $E_{\mat{\rho}}$
which we map into $g(\w)$ on ${\cal C}_{\mat{\rho}}$, by defining
for $k=1,\dots,d$
\[
z_{k}=\frac{w_{k}+w_{k}^{-1}}{2}\,.
\]
Then, we saw that 
\[
a_{j_{1}\ldots j_{d}}=\frac{2^{d-m}}{\pi^{d}\left(\rho_{1}^{2j_{1}}+\rho_{1}^{-2j_{1}}\right)\cdots\left(\rho_{d}^{2j_{d}}+\rho_{d}^{-2j_{d}}\right)}\oint_{E_{\mat{\rho}}}f\left(z_{1},\ldots,z_{d}\right)\overline{T_{j_{1}}(z_{1})\cdots T_{j_{d}}(z_{d})}\left|\frac{dz_{1}\cdots dz_{d}}{\sqrt{1-z_{1}^{2}\cdots}\sqrt{1-z_{d}^{2}}}\right|
\]
In particular, since $f(z_{1},\ldots,z_{d})$ is a continuation of
$f(x_{1},\ldots,x_{d})$ to the complex plane, replacing each $z_{k}$
with $x_{k}=Re(z_{k})$ for $k=1,\dots d$ gives
\[
a_{j_{1}\ldots j_{d}}=\frac{2^{d-m}}{\pi^{d}}\int_{\left[-1,1\right]^{d}}\frac{f\left(x_{1},\ldots,x_{d}\right)T_{j_{1}}(x_{1})\cdots T_{j_{d}}(x_{d})}{\sqrt{1-x_{1}^{2}}\cdots\sqrt{1-x_{d}^{2}}}dx_{1}\cdots dx_{d}\,.
\]
This completes the first part of the proof. For the second part of
the proof, we use the bound on $f$ representation in Eq.~(\ref{eq:coeff_w})
for the coefficients to obtain:
\begin{eqnarray*}
\left|a_{j_{1}\ldots j_{d}}\right| & = & \frac{2^{-m}}{\pi^{d}}\left|\oint_{{\cal C}_{\mat{\rho}}}w_{1}^{-j_{1}-1}\cdots w_{d}^{-j_{d}-1}g\left(w_{1},\ldots,w_{d}\right)dw_{1}\cdots dw_{d}\right|\\
 & \leq & \frac{2^{-m}M}{\pi^{d}}\oint_{{\cal C}_{\mat{\rho}}}\left|w_{1}\right|^{-j_{1}-1}\cdots\left|w_{d}\right|^{-j_{d}-1}dw_{1}\cdots dw_{d}\\
 & = & \frac{2^{-m}M}{\pi^{d}\rho_{1}^{j_{1}+1}\cdots\rho_{d}^{j_{d}+1}}\oint_{{\cal C}_{\mat{\rho}}}dw_{1}\cdots dw_{d}\\
 & = & \frac{2^{-m}M}{\pi^{d}\rho_{1}^{j_{1}+1}\cdots\rho_{d}^{j_{d}+1}}\cdot2^{d}\pi^{d}\rho_{1}\cdots\rho_{d}\\
 & = & \frac{2^{d-m}M}{\rho_{1}^{j_{1}}\cdots\rho_{d}^{j_{d}}}
\end{eqnarray*}
\end{proof}

\end{document}